\newtheorem{theorem}{Theorem}
\newtheorem{proposition}{Proposition} 
\newtheorem{corollary}{Corollary} 
\newtheorem{remark}{Remark}
\begin{document}
	
\title[On the cyclicity of hyperbolic polycycles]{On the cyclicity of hyperbolic polycycles}

\author[Claudio Buzzi, Armengol Gasull and Paulo Santana]
{Claudio Buzzi$^1$, Armengol Gasull$^2$ and Paulo Santana$^1$}

\address{$^1$ IBILCE--UNESP, CEP 15054--000, S. J. Rio Preto, S\~ao Paulo, Brazil}
\email{claudio.buzzi@unesp.br; paulo.santana@unesp.br}

\address{$^2$ Departament de Matem\`{a}tiques, Facultat de Ci\`{e}ncies, Universitat Aut\`{o}noma de Barcelona and Centre de Recerca Matem\`{a}tica, Spain}
\email{armengol.gasull@uab.cat}

\subjclass[2020]{Primary: 34C37. Secondary: 37C29 and 34C23.}

\keywords{Polycycle, limit cycle, displacement map, cyclicity, heteroclinic and homoclinic orbits}

\begin{abstract}
	Let $X$ be a planar smooth vector field with a polycycle $\Gamma^n$ with $n$ sides and all its corners,   that are at most $n$ singularities, being hyperbolic saddles. In this paper we study the cyclicity of $\Gamma^n$ in terms of the hyperbolicity ratios of these saddles,  giving explicit conditions that ensure that it is at least $k,$ for any $k\leqslant n.$ Our result extends old results and also provides a  more accurate proof of the known ones  because we rely on some recent powerful works that study in more detail the regularity with respect to initial conditions and parameters of the Dulac map of hyperbolic saddles for families of vector fields. We also prove that when $X$ is polynomial  there is  a polynomial perturbation (in general with degree much higher that the one of $X$) that attains each of the obtained lower bounds for the  cyclicities. Finally, we also study some related inverse problems and provide concrete examples of applications in the polynomial world.
\end{abstract}

\maketitle

\section{Introduction and Main Result}

Let $X$ be a planar smooth vector field (i.e. of class $C^\infty$). A \emph{graphic} $\Gamma$ for $X$ is a compact, non-empty invariant subset which is a continuous (but not necessarily homeomorphic) image of $\mathbb{S}^1$ and consists of a finite number of isolated singularities $\{p_1,\dots,p_n\}$ (not necessarily distinct) and a compatibly set of distinct regular orbits $\{L_1,\dots,L_n\}$ such that $p_i$ is the $\omega$-limit of $L_i$. A \emph{polycycle} is a graphic with a well defined first return map on one of its sides. A polycycle is \emph{hyperbolic} if all its singularities are hyperbolic saddles. Let $\Gamma^n$ denote a hyperbolic polycycle composed by the hyperbolic saddles $\{p_1,\dots,p_n\}$ (not necessarily distinct) and by the distinct regular orbits $\{L_1,\dots,L_n\},$ the sides of the polycycle, such that $p_i$ is the $\omega$-limit of $L_i,$ see Figure~\ref{Fig12}.

\begin{figure}[ht]
	\begin{center}
		\begin{minipage}{8.5cm}
			\begin{center} 
				\begin{overpic}[width=6cm]{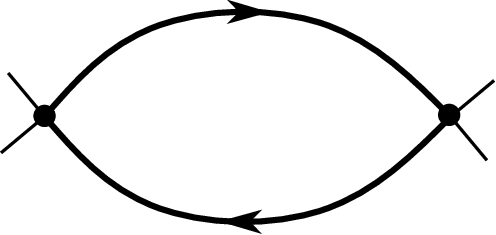} 
					\put(95,23){$p_1$}
					\put(0,23){$p_2$}
					\put(60,36){$L_1$}
					\put(30,8){$L_2$}
				\end{overpic}
				
				$(a)$
			\end{center}
		\end{minipage}
		\begin{minipage}{8.5cm}
			\begin{center} 
				\begin{overpic}[width=6cm]{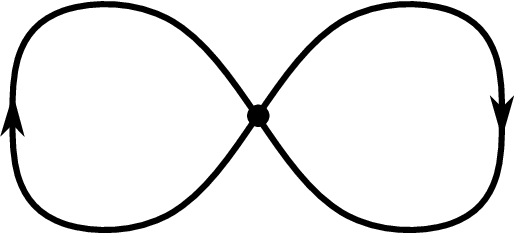} 
					\put(54,22.5){$p_1=p_2$}
					\put(84,6){$L_1$}
					\put(15,38){$L_2$}
				\end{overpic}
				
				$(b)$
			\end{center}
		\end{minipage}
	\end{center}
	\caption{Illustration of $\Gamma^2$, with $(a)$ distinct and $(b)$ non-distinct hyperbolic saddles.}\label{Fig12}
\end{figure}

Let $\lambda_i^s<0<\lambda_i^u$ be the associated eigenvalues of the saddle $p_i$, $i\in\{1,\dots,n\}$. The \emph{hyperbolicity ratio} of $p_i$ is the positive real number
\begin{equation}\label{3}
	r_i=\frac{|\lambda_i^s|}{\lambda_i^u}.
\end{equation}
The \emph{graphic number} of $\Gamma^n$ is the positive real number given by,
\begin{equation}\label{23}
	r(\Gamma^n)=\prod_{i=1}^{n}r_i.
\end{equation}
Cherkas \cite{Cherkas} proved that if $r(\Gamma^n)\neq1$, then $\Gamma^n$ has a well defined stability. More precisely, if $r(\Gamma^n)>1$, then $\Gamma^n$ is stable (i.e. it attracts the orbits in the region where the first return map is defined). Similarly, if $r(\Gamma^n)<1$, then $\Gamma^n$ is unstable. Since $r(\Gamma^n)$ depends continuously on smooth perturbations, it follows that if $r(\Gamma^n)\neq1$, then $\Gamma^n$ has no change of stability for small perturbations that do not break the polycycle. According with the terminology introduced by Sotomayor \cite[Section 2.2]{Soto}, when  $r(\Gamma^n)\neq1$ it is said that $\Gamma^n$ is \emph{simple}. 

Roughly speaking, we say that $\Gamma^n$ has \emph{cyclicity} greater or equal~$k$ inside a family of vector fields containing~$X$ if it is possible to bifurcate at least $k$ limit cycles from   $\Gamma^n$ for some arbitrarily small perturbations of $X$ inside this family  (a more rigorous definition shall be given latter). Several authors have results computing also exact cyclicities or upper bounds, but our results are restricted to give lower bounds. For instance, Andronov and Leontovich \cite{AndLeo1959} proved that if $n=1$ and $r(\Gamma^1)=r_1\neq1$, then the cyclicity of $\Gamma^1$ is at most one. For  accessible and didactic versions of this result, we refer to Andronov et al \cite[$\mathsection29$]{And1971} or Kuznetsov \cite[Section 6.2]{Kuz2004}. If $n=2$, Mourtada \cite{Mourtada2} proved that if $(r_1-1)(r_2-1)\ne0$, then the cyclicity of $\Gamma^2$ is at most $2$. Moreover, if $(r_1-1)(r_2-1)<0$, then it is~$2$ for suitable families. For $n\in\{3,4\}$, Mourtada \cites{Mourtada3,Mourtada4} also proved similar generic results, showing also the striking result that when $n=4$ there are generic families with cyclicity  $5.$ For more details, we refer to Roussarie \cite[Chapter $5$]{Roussarie}. We remark that to obtain such cyclicities, in general it is necessary to break the polycycles. To understand why this jump in the cyclicity happens when $n$ increases it is instructive to read the recent paper of Panazzolo~\cite{Panazzolo} where the author proposes a representative model for the breaking of hyperbolic polycycles.

Recently, Dukov~\cite{Duk2023} proved that for each $n\geqslant 2$, if $\Gamma^n$ satisfies again some generic conditions, then any limit cycle bifurcating from $\Gamma^n$ by a finite-dimensional perturbation has multiplicity at most $n$. 

On the other hand, on non-generic cases and with suitable perturbations,  it is known that the cyclicity can be much higher than $n.$  For instance, for $n=1$ (resp. $n=2$) Han and Zhu \cite{HanZhu2007} have provided an example of $\Gamma^1$ with cyclicity at least $5$ (resp. $12$), inside the polynomial systems of degree~$8$ (resp. $11$).  A higher cyclicity for $n=2$ is given by Tian and Han in \cite{TiaHan2017}. 

For a study of the cyclicity of \emph{persistent} polycycles (i.e. to obtain limit cycles without breaking the original polycycle), we refer to Marin and Villadelprat \cite{MarVil2022}. For other examples of lower bounds for the cyclicity of $\Gamma^n$ for low values of $n$, we refer to \cite{SheHanTia2020} and the references therein. We also refer to the works of Gasull et al \cite{GasManMan} and Han et al \cite{HanHuLiu2003} for the study of the stability of polycycles where the graphic number \eqref{23} is equal to $1$. 

In recent years there is an extension of some results, such as the one of Cherkas, to the case of planar non-smooth vector fields (also known as piecewise smooth or Filippov systems). See Santana \cite{San2023}.

Inspired by the work of Han et al \cite{HanWuBi2004}, in this paper we study under generic conditions the cyclicity of $\Gamma^n$, $n\geqslant 1$, both in the smooth and polynomial cases.

In a few words, the geometric idea behind the bifurcations of the limit cycles consists in breaking a given polycycle $\Gamma^n$ in ``sub-polycycles'' $\Gamma^{n-1}$, $\Gamma^{n-2},\dots$ by casting out its hyperbolic saddles one-by-one in such a way that at least one limit cycle bifurcates at each step, see Figure~\ref{Fig6}. 
\begin{figure}[h]		
		\begin{center}
			\begin{minipage}{5cm}
				\begin{center} 
					\begin{overpic}[width=4cm]{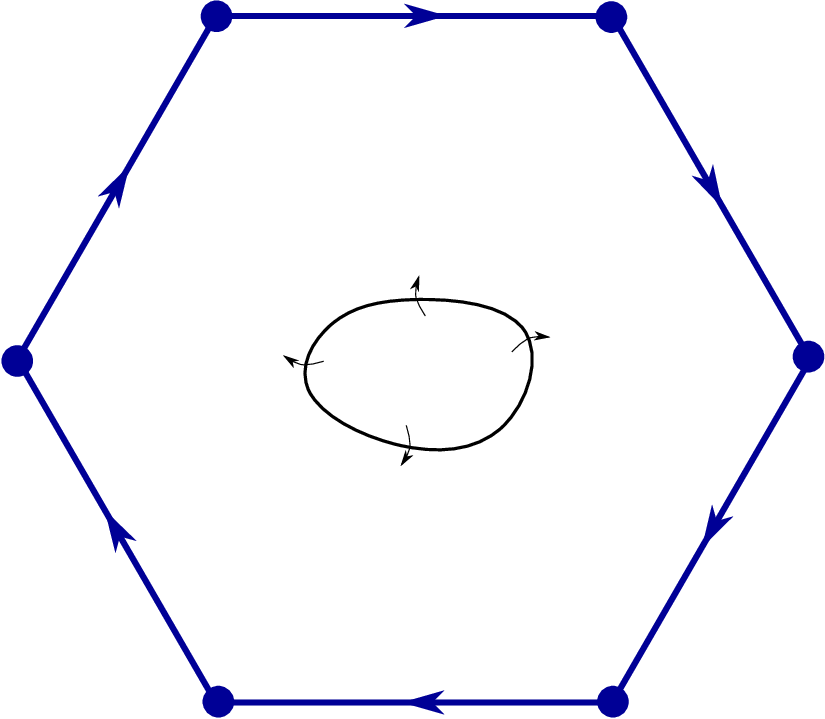} 
						\put(78,85){$p_1$}
						\put(14,85){$p_2$}
						\put(-10,43){$p_3$}
						\put(14,0){$p_4$}
						\put(78,0){$p_5$}	
						\put(101,43){$p_6$}
					\end{overpic}
				
					$\;$
				
					$(a)$ Unperturbed.
					
				\end{center}
			\end{minipage}
			\begin{minipage}{5cm}
				\begin{center} 
					\begin{overpic}[width=4cm]{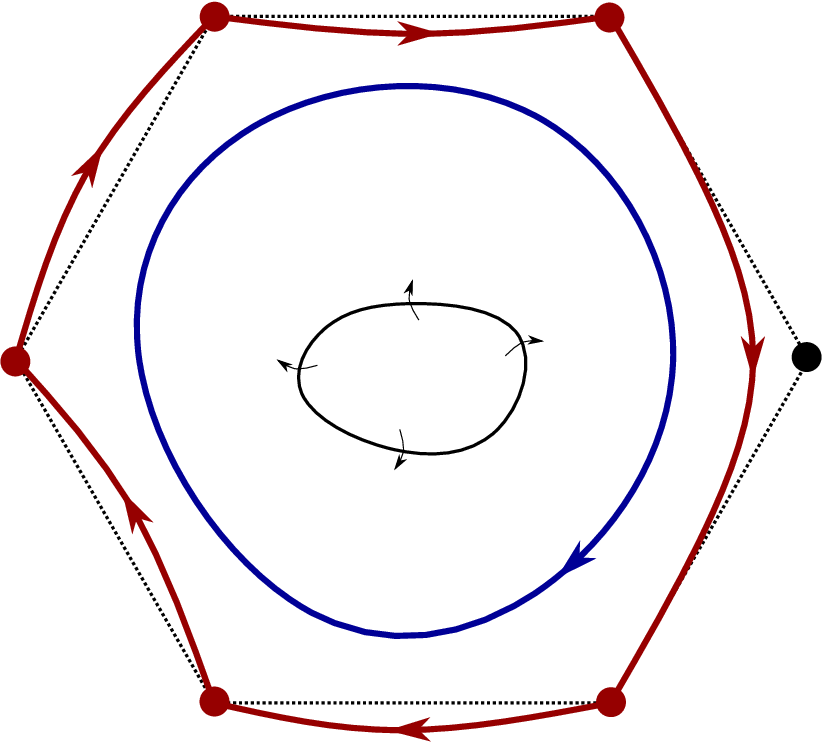} 
						\put(79,85){$p_1$}
						\put(12,85){$p_2$}
						\put(-10,43){$p_3$}
						\put(14,0){$p_4$}
						\put(78,0){$p_5$}	
						\put(101,43){$p_6$}
					\end{overpic}
				
					$\;$
				
					$(b)$ First perturbation.
					
				\end{center}
			\end{minipage}
			\begin{minipage}{5cm}
				\begin{center} 
					\begin{overpic}[width=4cm]{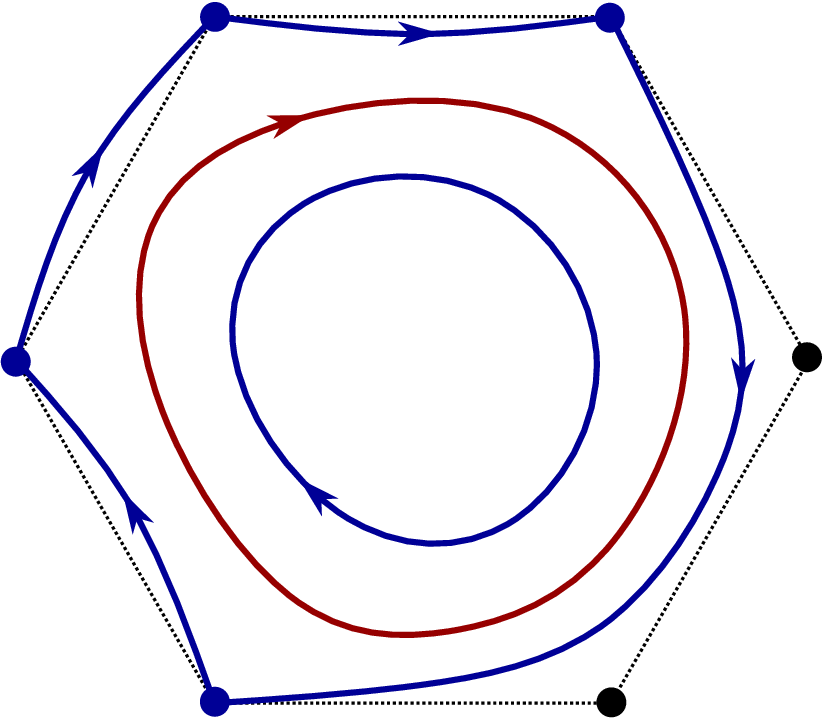} 
						\put(78,85){$p_1$}
						\put(14,85){$p_2$}
						\put(-10,43){$p_3$}
						\put(14,0){$p_4$}
						\put(78,0){$p_5$}	
						\put(101,43){$p_6$}
					\end{overpic}
				
					$\;$
					
					$(c)$ Second perturbation.
					
				\end{center}
			\end{minipage}
		\end{center}
	\caption{Illustration of the bifurcation process. Blue means stable and red means unstable. Colors available in the online version.}\label{Fig6}
\end{figure} 
More precisely, if at a given step the polycycles $\Gamma^{n_0}$ and $\Gamma^{n_0-1}$ have opposite stabilities, then it follows from the Poincar\'e-Bendixson Theorem (and some technical results) that at least one limit cycle of odd multiplicity bifurcates when we break from $\Gamma^{n_0}$ to $\Gamma^{n_0-1}$. To ensure these opposite stabilities it is sufficient to have $(r(\Gamma^{n_0})-1)(r(\Gamma^{n_0-1})-1)<0$, see~\eqref{23}. Moreover when casting out the hyperbolic saddles we do not need to follow the ``canonical'' indexation $\{p_1,\dots,p_6\}$, as in Figure~\ref{Fig6}. Rather at each step we can choose which singularity to expel in order to maximize the number of stability changes and thus the number of limit cycles. At Figure~\ref{Fig6} for example, one could start the process by expelling $p_4$ and then $p_1$ and etc.

As we shall see in Proposition~\ref{Main2}, any possible combination of $n$ hyperbolicity ratios \eqref{3} $r_1,\dots,r_n$ is realizable by a polynomial vector field of degree $n$. Hence, the possibility to choose the most convenient singularity to expel at each steep of the bifurcation process is very important in order to obtain better lower bounds for the number of limit cycles.

Therefore the main objective of this paper is to formalize this geometric idea, which includes deve\-loping the technical machinery necessary to it. At the end we also apply these ideas on concrete polynomial vector fields. In particular, for instance we prove that Figure~\ref{Fig6} is realizable by a polynomial vector field of degree six, see Proposition~\ref{Main3}.

To state precisely our main result we need to introduce some notations. Given $r\geqslant1$ \emph{finite}, let $C^r(\mathbb{R}^2,\mathbb{R}^2)$ be the set of the functions $f\colon\mathbb{R}^2\to\mathbb{R}^2$ of class $C^r$. Given $f\in C^r(\mathbb{R}^2,\mathbb{R}^2)$, a compact set $B\subset\mathbb{R}^2$ and $\varepsilon>0$, let $V(f,B,\varepsilon)\subset C^r(\mathbb{R}^2,\mathbb{R}^2)$ be the set of $C^r$-functions $g\colon\mathbb{R}^2\to\mathbb{R}^2$ such that  
	\[\max_{\substack{x\in B \\ |k|\leqslant r}}\left|\left|\frac{\partial^{|k|}f}{\partial x_1^{k_1}\partial x_2^{k_2}}(x)-\frac{\partial^{|k|}g}{\partial x_1^{k_1}\partial x_2^{k_2}}(x)\right|\right|<\varepsilon,\]
where $x=(x_1,x_2)$, $k=(k_1,k_2)\in\mathbb{Z}^2_{\geqslant0}$ and $|k|=k_1+k_2$. The Whitney's weak $C^r$-topology~\cite[Chapter 2]{Hirsch} is the topology on $C^r(\mathbb{R}^2,\mathbb{R}^2)$ having the family of all such $V(f,B,\varepsilon)$ as a sub-base. In other words, it is the smaller topology that contains all such $V(f,B,\varepsilon)$. Let now $\mathcal{P}$ be the set of planar polynomial vector fields \emph{of any degree}. Since $\mathcal{P}\subset C^r(\mathbb{R}^2,\mathbb{R}^2)$, we can endow $\mathcal{P}$ with the \emph{subspace topology} $\tau_r$, inherited from Whitney's weak $C^r$-topology. Hence, we set the topological space $\mathcal{P}^r=(\mathcal{P},\tau_r)$. Observe that two vector fields $X$, $Y\in\mathcal{P}^r$ are close if there is a ``big'' compact $B\subset\mathbb{R}^2$ and a small $\varepsilon>0$ such that 
	\[\max_{\substack{x\in B \\ |k|\leqslant r}}\left|\left|\frac{\partial^{|k|}X}{\partial x_1^{k_1}\partial x_2^{k_2}}(x)-\frac{\partial^{|k|}Y}{\partial x_1^{k_1}\partial x_2^{k_2}}(x)\right|\right|<\varepsilon.\]
Let $\mathfrak{X}=C^\infty(\mathbb{R}^2,\mathbb{R}^2)$ be the set of planar smooth vector fields. We endow $\mathfrak{X}$ with Whitney's strong $C^\infty$-topology $\tau_\infty$, see \cite[Chapter 2]{Hirsch} and \cite[Section~$2.3$]{GolGui1973}. Let $\mathfrak{X}^\infty=(\mathfrak{X},\tau_\infty)$. Roughly speaking, $Y_n\to X$ in $\mathfrak{X}^\infty$ if and only if for every $r\geqslant0$ finite there is a compact $B_r\subset\mathbb{R}^2$ and $n_r\in\mathbb{N}$ such that
	\[\lim\limits_{n\to\infty}\max_{\substack{x\in B_r \\ |k|\leqslant r}}\left|\left|\frac{\partial^{|k|}Y_n}{\partial x_1^{k_1}\partial x_2^{k_2}}(x)-\frac{\partial^{|k|}X}{\partial x_1^{k_1}\partial x_2^{k_2}}(x)\right|\right|=0\]
and $\left.Y_n\right|_{\mathbb{R}^2\setminus B_r}=\left.X\right|_{\mathbb{R}^2\setminus B_R}$, for every $n\geqslant n_r$, see Golubitsky and Guillemin~\cite[p. $43$]{GolGui1973}. Given $X\in\mathfrak{X}^\infty$ an example of convergence $Y_n\to X$, in $\mathfrak{X}^\infty$, that we shall use in this paper is the one given by $Y_n=X+\frac{1}{n}\Phi$, where $\Phi\in C^\infty(\mathbb{R}^2,\mathbb{R}^2)$ has compact support.
	
When interested only in a particular compact set $B$, we may restrict $\mathfrak{X}^\infty$ to it and say that $Y_n\to X$ in $\mathfrak{X}^\infty$ restricted to $B$ if for every $r\geqslant0$ finite we have,
	\[\lim\limits_{n\to\infty}\max_{\substack{x\in B \\ |k|\leqslant r}}\left|\left|\frac{\partial^{|k|}Y_n}{\partial x_1^{k_1}\partial x_2^{k_2}}(x)-\frac{\partial^{|k|}X}{\partial x_1^{k_1}\partial x_2^{k_2}}(x)\right|\right|=0.\]
Let $\Gamma^n$ be a hyperbolic polycycle composed by the (not necessarily distinct) hyperbolic saddles $\{p_1,\dots,p_n\}$. Let $I_n$ be the set of the permutations of $n$ elements. Given $\sigma\in I_n$ let
	\[R_{0,\sigma}=R_{1,\sigma}^{-1}, \quad R_{i,\sigma}=\prod_{j=1}^{i}r_{\sigma(j)}, \quad i\in\{1,\dots,n\},\]
where $r_k$ is the hyperbolicity ratio \eqref{3} of $p_k$. Let also
	\[\Delta(\Gamma^n,\sigma)=\#\{i\in\{1,\dots,n\}\colon (R_{i,\sigma}-1)(R_{i-1,\sigma}-1)<0\},\]
where $\#I$ denotes the cardinality of $I$. Finally, let
	\[\Delta(\Gamma^n)=\max\{\Delta(\Gamma^n,\sigma)\colon\sigma\in I_n\}.\]	
Observe that $0\leqslant\Delta(\Gamma^n)\leqslant n$. In particular, $\Delta(\Gamma^n)=0$ if, and only if, $r_1=\dots=r_n=1$. 

Inspired by Roussarie~\cite[Definition $12$]{Roussarie}, we now properly define  what we mean when we say that  a polycycle~$\Gamma^n$ of a vector field $X,$ that belongs to a  topological spaces $\mathfrak{X},$  has \emph{cyclicity greater than or equal~$k.$} Given two compacts $C_1$, $C_2\subset\mathbb{R}^2$, recall that the \emph{Hausdorff distance} between them is given by,
	\[d_H(C_1,C_2)=\max\Big\{\sup_{x\in C_1}d(x,C_2),\sup_{x\in C_2}d(C_1,x)\Big\},\]
where $d(x,C)=\inf\{||x-y||\colon y\in C\}$ is the usual distance between a point and a set in the euclidean space.	Then, we will say that $\textit{Cycl }(X,\mathcal{X},\Gamma^n)\geqslant k$  if, given any $\varepsilon>0$ there exists a vector field $Y_\varepsilon\in\mathcal{X},$ such that  it has at least $k$ limit cycles $\gamma_j(\varepsilon),j=1,\ldots,k$ such that
	\[\max\left\{d_H(\gamma_j(\varepsilon),\Gamma^n)\colon j\in\{1,\dots,k\}\right\}<\varepsilon\]		
and $Y_\varepsilon$ tends to $X$ when $\varepsilon$ goes to $0$.
	
Our main result is the following. 

\begin{theorem}\label{Main1}
	Let $\mathcal{X}$ be one of the topological spaces $\mathfrak{X}^\infty$ or $\mathcal{P}^r$, for some $r\geqslant1$. If $X\in\mathcal{X}$ has a hyperbolic polycycle $\Gamma^n$, then $\textit{Cycl }(X,\mathcal{X},\Gamma^n)\geqslant\Delta(\Gamma^n)$.
\end{theorem}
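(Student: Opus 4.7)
The plan is to formalize the geometric idea of breaking the polycycle saddle by saddle in an optimal order, producing one limit cycle at each step where the graphic number crosses $1$. First I fix a permutation $\sigma \in I_n$ achieving $\Delta(\Gamma^n,\sigma)=\Delta(\Gamma^n)$, and let $J=\{i\in\{1,\dots,n\}\colon(R_{i,\sigma}-1)(R_{i-1,\sigma}-1)<0\}$, so $|J|=\Delta(\Gamma^n)$. Then I would iteratively construct a finite sequence $X=X_n,X_{n-1},\dots,X_0$ of vector fields, where $X_{k-1}$ is obtained from $X_k$ by a perturbation supported in an arbitrarily small neighborhood of the saddle $p_{\sigma(k)}$ and of one of the two regular sides of the current polycycle adjacent to it. This perturbation is designed to ``cast out'' the saddle $p_{\sigma(k)}$: the chosen separatrix is displaced so that, together with the remaining saddles and their original connections, it forms a new hyperbolic polycycle $\Gamma^{k-1}$ composed of $\{p_{\sigma(1)},\dots,p_{\sigma(k-1)}\}$ (for $k\geqslant 2$), with graphic number $R_{k-1,\sigma}$. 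For $k=1$ the ``new polycycle'' degenerates, but the relevant region bounded by the perturbed separatrix is still well defined and its stability on the interior is dictated by $R_{0,\sigma}=1/r_{\sigma(1)}$, consistently with the convention used in the definition of $\Delta$.

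For each $k\in J$, Cherkas' theorem applied to $\Gamma^k$ (as a polycycle of $X_k$) and to $\Gamma^{k-1}$ (as a polycycle of $X_{k-1}$) yields opposite stabilities from the side where the annular transition region lies. A Poincar\'e--Bendixson argument in this region then forces the existence of at least one limit cycle of odd multiplicity for $X_{k-1}$ lying close (in Hausdorff distance) to $\Gamma^n$. Making this rigorous relies on a careful analysis of the displacement map on a transversal to one of the sides, decomposed as a composition of Dulac maps around each saddle and regular transit maps between them, and on showing that this displacement genuinely changes sign between two suitably chosen points of the transversal. This is exactly where the recent regularity results on Dulac maps with respect to initial conditions and parameters (alluded to in the abstract) enter critically, because the sign change must be controlled uniformly as the perturbation parameters shrink to zero.

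The inductive stitching then proceeds as follows. Pick $\varepsilon_n$ small and perform the first perturbation that breaks off $p_{\sigma(n)}$; if $n\in J$ a limit cycle $\gamma_n$ is produced. Choose $\varepsilon_{n-1}$ smaller than a threshold ensuring that $\gamma_n$, being of odd multiplicity and hence topologically robust under $C^1$-small perturbations, survives the next perturbation; then iterate. After $n$ steps the vector field $X_0$ has at least $\Delta(\Gamma^n)$ limit cycles, all contained in any prescribed Hausdorff neighborhood of $\Gamma^n$, and the total size of the perturbation can be made arbitrarily small in $\mathfrak{X}^\infty$ by taking the $\varepsilon_k$ sufficiently small. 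For the polynomial case $\mathcal{P}^r$, the same scheme applies after replacing each compactly supported smooth bump by a polynomial approximation on a large enough compact set, via Weierstrass, and using that the property ``has $\Delta(\Gamma^n)$ limit cycles in a prescribed Hausdorff neighborhood of $\Gamma^n$'' is open in $\tau_r$ once the limit cycles are hyperbolic (which we may arrange at each step by a further generic nudge).

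The main obstacle I anticipate is the controlled construction of the cast-out perturbation itself. Two issues arise simultaneously: engineering a localized perturbation that displaces precisely the desired separatrix while leaving the rest of the global structure essentially untouched, and proving quantitatively that the resulting displacement map changes sign between the appropriate transversals. The techniques of Mourtada, together with the more recent references cited in the abstract, handle exactly this kind of control for the Dulac maps of hyperbolic saddles; the essence of the proof should be to package them into an inductive framework that is valid for arbitrary $n$ and that imposes no genericity beyond the combinatorial condition $\Delta(\Gamma^n)>0$.
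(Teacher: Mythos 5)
Your overall strategy coincides with the paper's: fix a permutation realizing $\Delta(\Gamma^n)$, expel the saddles one at a time, use Cherkas' criterion to give consecutive polycycles opposite stabilities, apply Poincar\'e--Bendixson in an annular region to produce one limit cycle per sign change, keep the previously created cycles by robustness, and pass to the polynomial case by polynomially approximating the bump functions. But two steps, as you state them, would fail. The first is the claim that Poincar\'e--Bendixson ``forces the existence of at least one limit cycle of odd multiplicity.'' It only yields a periodic orbit; outside the analytic category infinitely many periodic orbits may bifurcate and accumulate on the new polycycle, so the orbit need not be isolated and ``odd multiplicity'' is not even defined. The paper repairs this with an auxiliary perturbation supported away from the surviving polycycle (Proposition~\ref{P4}) that converts a possibly non-isolated periodic orbit into a hyperbolic limit cycle; in the polynomial case analyticity together with the instability of the new polycycle already forces finiteness. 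Without some such device your induction cannot guarantee that the cycle created at step $k$ survives the later perturbations.

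The second gap concerns saddles with hyperbolicity ratio equal to $1$. Your scheme expels every $p_{\sigma(k)}$, but the composite displacement map recording the connection that bypasses an expelled saddle (the paper's $d_{n-1}^{(1)}$) is of class $C^1$ only when the ratio of that saddle differs from $1$ --- this is precisely where the Mar\'{\i}n--Villadelprat regularity of the Dulac map is needed --- so the implicit function theorem step breaks down for such saddles. Since $\Delta(\Gamma^n)$ can be attained even when some $r_i=1$, these saddles must be dealt with differently: the paper reorders so that they are never expelled, and after all sign-changing saddles are cast out it breaks all remaining connections of the residual (still simple) polycycle simultaneously inside the invariant region of Proposition~\ref{P3} to harvest the last cycle. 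Relatedly, ``a perturbation supported near $p_{\sigma(k)}$ and one adjacent side'' is not by itself enough to cast out a saddle while leaving the rest of the polycycle intact: one needs $n$ independent parameters whose Melnikov derivatives along the sides form a nonsingular (in the paper, essentially diagonal) matrix, so that the other $n-1$ connections can be restored by the implicit function theorem while exactly the chosen one is broken with a definite sign.
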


Given a polycycle $\Gamma^n$, the \emph{trivial permutation} $\tau\in I_n$ of $\Gamma^n$ is the permutation of the indexes of $\{p_1,\dots,p_n\}$ such that $p_{i+1}$ and $p_i$ are the $\alpha$ and $\omega$-limits of $L_i$, respectively, with $p_{n+1}=p_1$. See Figure~\ref{Fig13}.
\begin{figure}[h]
	\begin{center}
		\begin{minipage}{7cm}
			\begin{center} 
				\begin{overpic}[height=4cm]{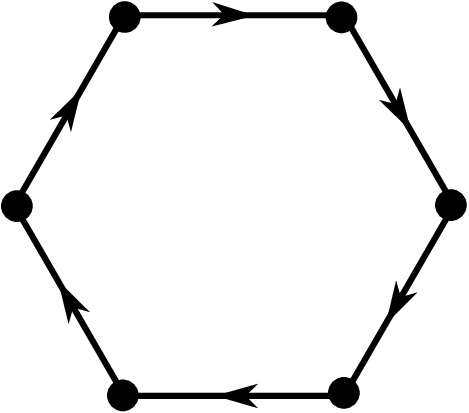} 
					\put(101,43){$p_6$}
					\put(78,85){$p_1$}
					\put(14,85){$p_2$}
					\put(-9,43){$p_3$}
					\put(14,0){$p_4$}
					\put(78,0){$p_5$}
					
					\put(50,76){$L_1$}
					\put(18,63){$L_2$}
					\put(12,32){$L_3$}
					\put(39,9){$L_4$}
					\put(72,23){$L_5$}
					\put(74,59){$L_6$}
					
				\end{overpic}
				
				$(a)$
			\end{center}
		\end{minipage}
		\begin{minipage}{7cm}
			\begin{center} 
				\begin{overpic}[height=4.1cm]{Fig26.eps}
					\put(101,43){$p_6$}
					\put(78,85){$p_3$}
					\put(14,85){$p_2$}
					\put(-9,43){$p_4$}
					\put(14,0){$p_1$}
					\put(78,0){$p_5$}
					
					\put(50,76){$L_3$}
					\put(18,63){$L_2$}
					\put(12,32){$L_4$}
					\put(39,9){$L_1$}
					\put(72,23){$L_5$}
					\put(74,59){$L_6$}
				\end{overpic}
				
				$(b)$
			\end{center}
		\end{minipage}
	\end{center}
	\caption{Illustration of $\Gamma^6$ with $(a)$ trivial and $(b)$ non trivial permutation on the indexes of the singularities.}\label{Fig13}
\end{figure} 

The case $X\in\mathfrak{X}^\infty$ of our main result is an extension of~\cite[Theorem~$1.1$]{HanWuBi2004} that corresponds to the case $\Delta(\Gamma^n,\tau)=n$  where $\tau$ is the trivial permutation of~$\Gamma^n$. Our proof is inspired in the ideas of that  paper but it is more detailed and transparent because it relies on recent results of Marin and Villadelaprat \cite{MarVil2020,MarVil2021,MarVil2024} that give regularity properties with respect initial conditions and parameters of the Dulac map associated to the hyperbolic sectors of hyperbolic saddles for families of vector fields, see Section~\ref{Sec2.1}. We comment in more detail about the differences between our proof and that of \cite{HanWuBi2004} in Remark~\ref{Remark7}. 

The result in the polynomial case $X\in\mathcal{P}^r$ is totally new. It is motivated by the only open problem left in order to get a complete characterization of the structurable stable polynomials vector fields of degree $n$ with the topology of the coefficients. This open problem consists on knowing whether non-hyperbolic limit cycles of odd multiplicity can be structurable stables or not, see~\cite{San2023,Sha1987,Soto1985}. Although we have not advanced on  this question  we have achieved a related result. More concretely, under generic conditions, we have been able to bifurcate $n$ limit cycles from a polycycle $\Gamma^n$ of a polynomial vector field $X$ with a polynomial perturbation and without losing control of its derivatives in any prescribed compact. Unfortunately, we have not been able to perform this unfold with the degree of the perturbations equals to the one of $X$.

In a few words, in the case $\Delta(\Gamma^n)=n$ what we prove is that starting from $\Gamma^n$ we can perturb $X$ such that from $\Gamma^n$ bifurcate a limit cycle and a new polycycle $\Gamma^{n-1} ,$ satisfying similar hypotheses to the ones of $\Gamma^n.$ Then, this process can be repeated $n-1$ times until arriving to $n$ limit cycles, all near~$\Gamma^n.$ The technical part of the proof is the control of the continuity and differentiability, with respect to initial conditions and parameters,  of the various return maps associated to the appearing polycycles.

The paper is structured as follows. In Section~\ref{Sec2} we recall some preliminary results about the Dulac and the displacement maps. In Section~\ref{Sec3} we work on the displacement map between non-subsequent saddles. In Section~\ref{Sec4} we prove some technical lemmas about the approximation of planar smooth functions by polynomials functions; the existence of positive or negative invariant regions; and the perturbation of periodic orbits. Theorem~\ref{Main1} is proved in Section~\ref{Sec5}. In Section~\ref{Sec6} we solve the inverse problem of constructing a vector field $X$ with a polycycle $\Gamma^n$ with any given set of prescribed hyperbolicity ratios and we apply our techniques to a concrete polynomial example. At Section~\ref{Sec7} we include some brief considerations about the current state of the theory of unfolding of polycycles and how our results can be applied to it, specially in the polynomial case.

\section{Dulac and displacement maps}\label{Sec2}

\subsection{The Dulac map}\label{Sec2.1}

Let $X_\mu$ be a planar smooth vector field depending on a smooth way on a parameter $\mu\in\mathbb{R}^N$, $N\geqslant1$, defined in a neighborhood of a hyperbolic saddle $p$ at $\mu=0$. Since $p$ is hyperbolic, it follows that if $\Lambda\subset\mathbb{R}^N$ is a small enough neighborhood of the origin, then the perturbation $p(\mu)$ of $p$ is well defined and it is also a hyperbolic saddle, for every $\mu\in\Lambda$. Moreover, restricting $\Lambda$ if necessary, it is well known (see Lemma~$4.3$ of \cite{MarVil2021}) that there are neighborhoods $U\subset\mathbb{R}^2$ of $p$ and $V\subset\mathbb{R}^2$ of the origin $\mathcal{O}$, and a smooth map $\Phi:U\times\Lambda\to V$ such that for each $\mu\in\Lambda$ the map $\Phi(\cdot,\mu)\colon U\to V$ is a smooth change of coordinates that sends $p(\mu)$ to the origin $\mathcal{O}$ and its (local) unstable and stable manifolds $\ell^u(\mu)$ and $\ell^s(\mu)$ to the axis $Ox$ and $Oy$, respectively. By abuse of notation we still denote this new vector field by $X_\mu$. Given $\varepsilon>0$ small, let $\sigma\colon(-\varepsilon,\varepsilon)\times\Lambda\to\Sigma_\sigma$ and $\tau\colon(-\varepsilon,\varepsilon)\times\Lambda\to\Sigma_\tau$ be two $C^\infty$ transverse sections to $X_\mu$ defined by
	\[\sigma(s,\mu)=(\sigma_1(s,\mu),\sigma_2(s,\mu)), \quad \tau(s,\mu)=(\tau_1(s,\mu),\tau_2(s,\mu)),\]
and such that $\sigma_1(0,\mu)=0$ and $\tau_2(0,\mu)=0$, for every $\mu\in\Lambda$. Suppose also that if $s>0$, then $\sigma_1(s,\mu)>0$ and $\tau_2(s,\mu)>0$. Let 
	\[\Sigma_\sigma^+=\{\sigma(s,\mu)\in\Sigma_\sigma\colon s>0\}, \quad \Sigma_\tau^+=\{\tau(s,\mu)\in\Sigma_\sigma\colon s>0\}.\]
Let also $\varphi(t,x;\mu)$ be the solution of $X_\mu$ passing through $x\in V$ at $t=0$. On the first quadrant, $\varphi$ defines a transition map $\Sigma_\sigma^+\mapsto\Sigma_\tau^+$, which can be seen as a map
\begin{equation}\label{53}
	D\colon(0,\varepsilon)\times\Lambda\to(0,\varepsilon),
\end{equation}
due to the transverses section $\sigma$ and $\tau$. See Figure~\ref{Fig16}. 
\begin{figure}[ht]
	\begin{center}
		\begin{overpic}[height=5cm]{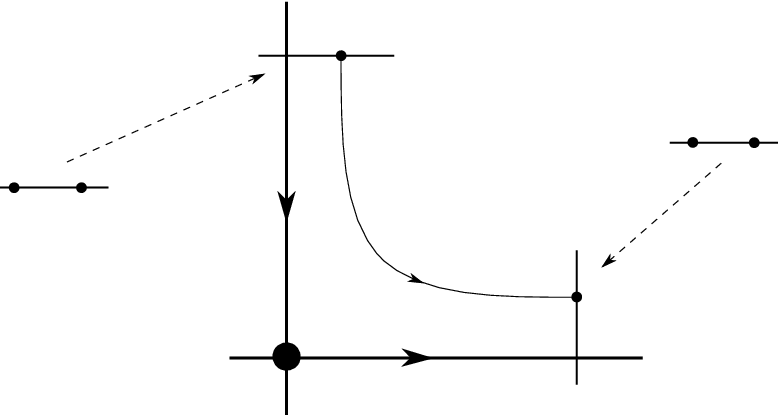} 
			\put(52,45){$\Sigma_\sigma$}
			\put(23,36){$\sigma$}
			\put(1,25){$0$}
			\put(9,26){$s$}
			
			\put(72,23){$\Sigma_\tau$}
			\put(87,24){$\tau$}
			\put(87.5,37){$0$}
			\put(94,37){$D(s)$}
			
			\put(32,2){$\mathcal{O}$}
			\put(28,50){$Oy^+$}
			\put(80,3){$Ox^+$}
			\put(48,25){$\varphi(\cdot,\sigma(s);\mu)$}
		\end{overpic}
	\end{center}
	\caption{The Dulac map near a hyperbolic saddle.}\label{Fig16}
\end{figure}
The map \eqref{53} is the \emph{Dulac map}. In recently years, Marin and Villadelprat \cites{MarVil2020,MarVil2021,MarVil2024} provided a full characterization of the Dulac map (and also of the Dulac time). In what follows, we will state some  properties of the Dulac map, that will be sufficient for the objectives of this paper. We recall that $r(\mu)$ denotes the hiperbolicity ratio \eqref{3} of the hyperbolic saddle $p(\mu)$, $\mu\in\Lambda$.

\begin{proposition}\label{MV1}
	If $r(0)>1$, then the Dulac map \eqref{53} can be extended to $s=0$ in a $C^1$-way and can be written as
		\[D(s,\mu)=\Delta_{00}(\mu)s^{r(\mu)}+\mathscr{R}(s,\mu),\]
	where $\Delta_{00}\colon\Lambda\to\mathbb{R}$ is a strictly positive function of class $C^\infty$ and $\mathscr{R}\colon[0,\varepsilon)\times\Lambda\to\mathbb{R}$ is a function of class $C^1$ satisfying
		\[\mathscr{R}(0,\mu)=0, \quad \frac{\partial\mathscr{R}}{\partial s}(0,\mu)=0, \quad \frac{\partial\mathscr{R}}{\partial\mu_j}(0,\mu)=0,\]
	for every $\mu\in\Lambda$ and $j\in\{1,\dots,N\}$. In particular,
		\[\lim\limits_{s\to0^+}D(s,\mu)=0, \quad \lim\limits_{s\to0^+}\frac{\partial D}{\partial s}(s,\mu)=0, \quad \lim\limits_{s\to0^+}\frac{\partial D}{\partial\mu_j}(s,\mu)=0,\]
	for every $\mu\in\Lambda$ and $j\in\{1,\dots,N\}$.
\end{proposition}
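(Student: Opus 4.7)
The proposition is a transcription of the Marin--Villadelprat expansion of the Dulac map from \cites{MarVil2020,MarVil2021,MarVil2024} tailored to the case $r(0)>1$; my plan is to organize the argument in three steps that isolate what must be extracted from those papers and show why the hypothesis $r(0)>1$ is what converts their expansion into the three clean limits stated above.

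First I would exploit the $C^\infty$ change of coordinates $\Phi(\cdot,\mu)$ already in place, putting the saddle at the origin with the local separatrices on the axes, and then apply a parametric finitely-smooth normal form in the spirit of Il'yashenko--Yakovenko as refined in \cite{MarVil2021}. After further absorbing the smooth parametrizations of the two transverse sections $\sigma$ and $\tau$, the problem reduces to computing the transition map between two standard transversals $\{y=y_0\}$ and $\{x=x_0\}$ in the normal-form coordinates, uniformly for $\mu\in\Lambda$.

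Second, integrating the flow in the normal form yields the ansatz $D(s,\mu)=\Delta_{00}(\mu)s^{r(\mu)}+\mathscr{R}(s,\mu)$ with $\Delta_{00}(\mu)>0$ smooth in $\mu$ and $\mathscr{R}(s,\mu)=o\bigl(s^{r(\mu)}\bigr)$ as $s\to 0^+$. Since $r(0)>1$, both $s^{r(\mu)}$ and its $s$-derivative $r(\mu)s^{r(\mu)-1}$ tend to $0$ as $s\to 0^+$, and so does the factor $s^{r(\mu)}\log(s)$ that appears when the principal term is differentiated with respect to $\mu$. Hence the three limits stated in the proposition are automatic for the principal term, and what remains is to check the corresponding vanishing of $\mathscr{R}$ and of its first-order partial derivatives at $s=0$.

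Third, the key regularity facts about $\mathscr{R}$---namely that it is $C^1$ on $[0,\varepsilon)\times\Lambda$ and that $\mathscr{R}$, $\partial_s\mathscr{R}$ and $\partial_{\mu_j}\mathscr{R}$ all vanish at $s=0$---are precisely the content of the refined remainder estimates in \cites{MarVil2020,MarVil2021}, which I would quote directly. The main obstacle is arranging the construction so that the logarithmic factors generated when differentiating $s^{r(\mu)}$ in $\mu$ are trapped inside the principal term $\Delta_{00}(\mu)s^{r(\mu)}$ rather than appearing in $\mathscr{R}$; this is the delicate engineering in the Marin--Villadelprat parametric normalization, but once it is available the three limits for $D$ claimed in the proposition follow immediately by combining the vanishing of the principal term with the estimates from the previous step.
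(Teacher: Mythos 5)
Your proposal is correct and follows essentially the same route as the paper: the paper's entire ``proof'' is the remark that the statement is Theorem~B of \cite{MarVil2021} with $L=r(0)$, the flat remainder being $C^1$-extendable precisely because $r(0)>1$, and your plan likewise imports all the nontrivial content (the parametric normal form and the refined remainder estimates) from \cites{MarVil2020,MarVil2021}. The only difference is that you additionally sketch the internal structure of the Marin--Villadelprat argument and verify explicitly that the principal term $\Delta_{00}(\mu)s^{r(\mu)}$ and its $s$- and $\mu_j$-derivatives (including the $s^{r(\mu)}\log s$ factor) vanish as $s\to0^+$ when $r(0)>1$, which the paper leaves implicit.
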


\begin{remark}
	Under the notation of \cite[Theorem $B$]{MarVil2021}, Proposition~\ref{MV1} follows by taking $L=r(0)$ and by replacing the flat term by its $C^1$ extension, which existence follows from $r(0)>1$. See Remark~$3$ right after \cite[Theorem $B$]{MarVil2021}.
\end{remark}

Observe that if we invert the time variable, then the solution of $-X_\mu$ defines a transition map $\Sigma_{\tau}^+\mapsto\Sigma_\sigma^+$ which can be seen as a map
\begin{equation}\label{54}
	D^{-1}\colon(0,\varepsilon)\times\Lambda\to(0,\varepsilon),
\end{equation}
satisfying
	\[D^{-1}(D(s),\mu)=s, \quad D(D^{-1}(s),\mu)=s,\]
for every $\mu\in\Lambda$. Since the hyperbolicity ratio of $\mathcal{O}$ in relation to $-X_\mu$ is given by $r(\mu)^{-1}$ and $D^{-1}$ is the Dulac map of $-X_\mu$, we also have the following result.

\begin{proposition}\label{MV2}
	If $r(0)<1$, then the Dulac map \eqref{54} can be extended to $s=0$ in a $C^1$-way and can be written as
		\[D^{-1}(s,\mu)=\Delta_{00}^*(\mu)s^{\frac{1}{r(\mu)}}+\mathscr{R}^*(s,\mu),\]
	where $\Delta_{00}^*\colon\Lambda\to\mathbb{R}$ is a strictly positive function of class $C^\infty$ and $\mathscr{R}^*\colon[0,\varepsilon)\times\Lambda\to\mathscr{R}$ is a function of class $C^1$ satisfying
		\[\mathscr{R}^*(0,\mu)=0, \quad \frac{\partial\mathscr{R}^*}{\partial s}(0,\mu)=0, \quad \frac{\partial\mathscr{R}^*}{\partial\mu_j}(0,\mu)=0,\]
	for every $\mu\in\Lambda$ and $j\in\{1,\dots,N\}$. In particular,
		\[\lim\limits_{s\to0^+}D^{-1}(s,\mu)=0, \quad \lim\limits_{s\to0^+}\frac{\partial D^{-1}}{\partial s}(s,\mu)=0, \quad \lim\limits_{s\to0^+}\frac{\partial D^{-1}}{\partial\mu_j}(s,\mu)=0,\]
	for every $\mu\in\Lambda$ and $j\in\{1,\dots,N\}$.
\end{proposition}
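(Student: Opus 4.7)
The plan is to derive Proposition~\ref{MV2} directly from Proposition~\ref{MV1} by reversing time. The point is that $D^{-1}$ is, by construction, the Dulac map of the reversed field $-X_\mu$ between the same two transverse sections, only with the roles of input and output swapped. Since the stable and unstable manifolds of a hyperbolic saddle are interchanged under time-reversal, the hyperbolicity ratio of the origin with respect to $-X_\mu$ is $1/r(\mu)$, which is strictly greater than $1$ at $\mu=0$ because we are assuming $r(0)<1$. Hence the hypothesis of Proposition~\ref{MV1} is satisfied by $-X_\mu$.

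More concretely, I would first observe that Lemma~$4.3$ of \cite{MarVil2021}, applied to $-X_\mu$ in place of $X_\mu$, yields a smooth family of changes of coordinates sending $p(\mu)$ to $\mathcal{O}$ and the (new) stable and unstable local manifolds to the coordinate axes. The cleanest way to align this with the setup of Proposition~\ref{MV1} is to post-compose with the involution $\Phi_0(x,y)=(y,x)$, which is independent of $\mu$ and therefore does not affect any $C^\infty$ or $C^1$ regularity in $(s,\mu)$. Under $\Phi_0$, the section $\tau$ of the original setup (sitting on $Ox^+$) plays the role of the ``$\sigma$-section'' (i.e.\ sits on $Oy^+$) for $-X_\mu$, and $\sigma$ plays the role of the ``$\tau$-section''. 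Therefore the Dulac map of $-X_\mu$ in this new frame is exactly the map $D^{-1}$ defined in \eqref{54}.

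Now I would simply invoke Proposition~\ref{MV1} for $-X_\mu$, whose hyperbolicity ratio at $\mu=0$ is
\[
\tilde r(\mu)=\frac{|-\lambda^u(\mu)|}{-\lambda^s(\mu)}=\frac{\lambda^u(\mu)}{|\lambda^s(\mu)|}=\frac{1}{r(\mu)},
\]
and $\tilde r(0)>1$ by assumption. This gives the decomposition
\[
D^{-1}(s,\mu)=\Delta_{00}^*(\mu)\,s^{1/r(\mu)}+\mathscr{R}^*(s,\mu),
\]
with $\Delta_{00}^*$ smooth and strictly positive (both properties inherited from the analogous $\Delta_{00}$ in Proposition~\ref{MV1}) and with $\mathscr{R}^*\in C^1$ satisfying $\mathscr{R}^*(0,\mu)=0$, $\partial_s\mathscr{R}^*(0,\mu)=0$ and $\partial_{\mu_j}\mathscr{R}^*(0,\mu)=0$. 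The limit statements at $s\to 0^+$ are then immediate consequences of the $C^1$ extension combined with these vanishing identities.

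There is no real obstacle in this proof; the only thing requiring a little care is the bookkeeping of the coordinate swap, to make sure that the section called $\sigma$ in the statement of Proposition~\ref{MV1} is identified with the section $\tau$ here and vice versa, and that the reversed field still admits the normalizing change of coordinates of \cite{MarVil2021} in a neighborhood of $\mu=0$. Once this identification is made the statement is exactly Proposition~\ref{MV1} with $r$ replaced by $1/r$.
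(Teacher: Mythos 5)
Your proposal is correct and follows exactly the paper's own route: the paper justifies Proposition~\ref{MV2} by noting that $D^{-1}$ is the Dulac map of the time-reversed field $-X_\mu$, whose hyperbolicity ratio at the origin is $r(\mu)^{-1}>1$ when $r(0)<1$, so Proposition~\ref{MV1} applies. Your additional care with the coordinate swap and the role exchange of the sections $\sigma$ and $\tau$ is a harmless elaboration of the same argument.
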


\begin{remark}[Theorem $B$ and Lemma $4.3$ of \cite{MarVil2021}]\label{Remark1}
	The smooth change of coordinates $\Phi\colon U\times\Lambda\to V$ that sends $p(\mu)$ to the origin and the unstable and stable manifolds $\ell^u(\mu)$ and $\ell^s(\mu)$ to the axis $Ox$ and $Oy$ is not necessarily for the characterization of the Dulac map. Moreover, the transverse sections $\Sigma_\sigma$ and $\Sigma_\tau$ does not need to be sufficiently close to $p(\mu)$. In particular, no normal form is needed to apply Propositions~\ref{MV1} and \ref{MV2}.
\end{remark}

\subsection{The displacement map}\label{Sec2.2}

Let $X$ be a planar smooth vector field with polycycle $\Gamma^n$ composed by $n$ hyperbolic saddles $p_i$, $i\in\{1,\dots,n\}$. In what follows, $\Gamma^n$ is endowed with the trivial permutation unless explicitly stated otherwise. Without loss of generality, suppose that $\Gamma^n$ is oriented in the clockwise way. Let $L_i$ be the regular orbit of $X$ from $p_{i+1}$ to $p_i$ (i.e. $\omega(L_i)=p_i$ and $\alpha(L_i)=p_{i+1}$, with $p_{n+1}=p_1$). Let $X=(P,Q)$ and define $X^\perp=(-Q,P)$. For each $i\in\{1,\dots,n\}$, let $x_i\in L_i$ and let $l_i$ be the normal section of $L_i$, at $x_i$, with the directed vector
\begin{equation}\label{55}
	v_i=\frac{1}{||X^\perp(x_i)||}X^\perp(x_i).
\end{equation}
Let $X_\mu$ be a smooth perturbation of $X$, with $\mu\in\mathbb{R}^N$, $N\geqslant 1$, and $X_0=X$. Since the saddles $p_i$ are hyperbolic, it follows that if $\Lambda\subset\mathbb{R}^N$ is a small enough neighborhood of the origin and $\mu\in\Lambda$, then the perturbation $p_i(\mu)$ of $p_i$ is well defined and it is also a hyperbolic saddle. For each $i\in\{1,\dots,n\}$, let $L_i^s(\mu)$ and $L_i^u(\mu)$ be the perturbations of $L_i$ such that $\omega(L_i^s(\mu))=p_i(\mu)$ and $\alpha(L_i^u(\mu))=p_{i+1}(\mu)$. Let also $x_i^s(\mu)$ and $x_i^u(\mu)$ be the intersections of $L_i^s(\mu)$ and $L_i^u(\mu)$ with $l_i$, respectively. See Figure~\ref{Fig1}. 
\begin{figure}[ht]
	\begin{center}
		\begin{minipage}{8.5cm}
			\begin{center} 
				\begin{overpic}[width=8cm]{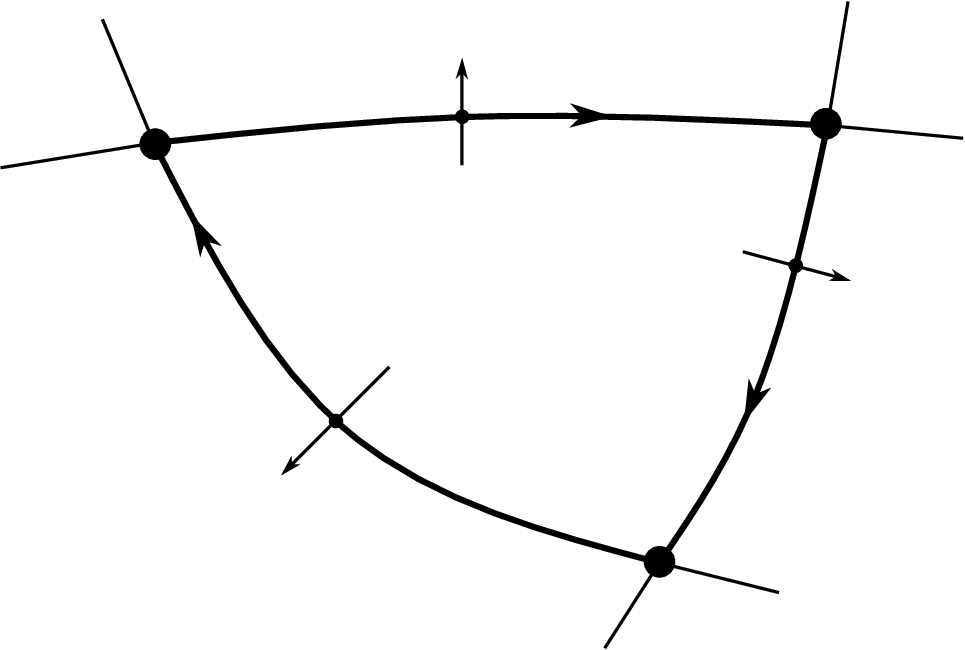} 
					\put(88,57){$p_2$}
					\put(17,55){$p_3$}
					\put(68,4){$p_1$}
					\put(49,51.5){$x_2$}
					\put(37,23){$x_3$}
					\put(76,37){$x_1$}
					\put(70,57){$L_2$}
					\put(20,32){$L_3$}
					\put(76,18){$L_1$}
					\put(49,60){$l_2$}
					\put(31,15){$l_3$}
					\put(87,40){$l_1$}
				\end{overpic}
				
				Unperturbed
			\end{center}
		\end{minipage}
		\begin{minipage}{8.5cm}
			\begin{center} 
				\begin{overpic}[width=8cm]{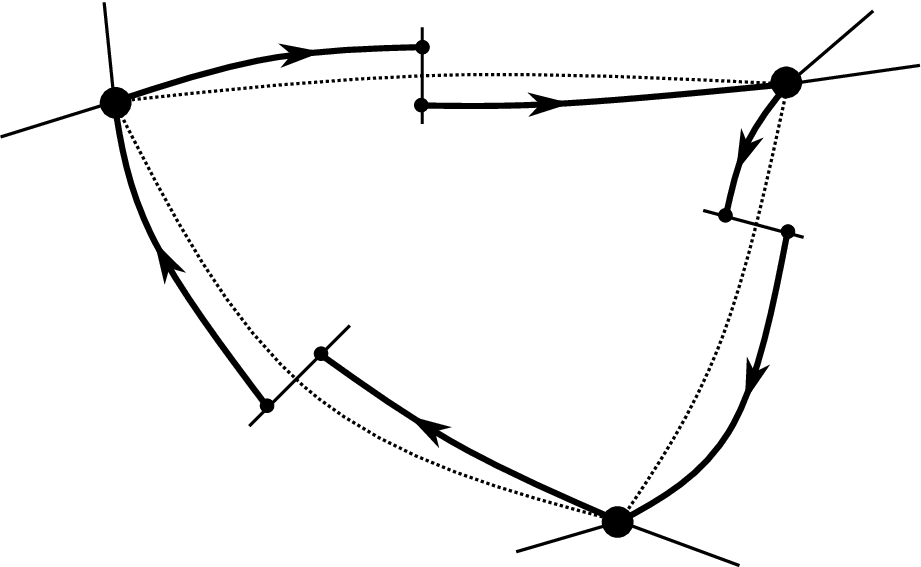} 
					\put(84,57){$p_2$}
					\put(6,53){$p_3$}
					\put(65,0){$p_1$}
					
					\put(47,56){$x_2^u$}
					\put(39,49){$x_2^s$}
					\put(25,59){$L_2^u$}
					\put(55,44){$L_2^s$}
					
					\put(29,25){$x_3^u$}
					\put(29,14){$x_3^s$}
					\put(49,16){$L_3^u$}
					\put(12,28){$L_3^s$}
					
					\put(75,33){$x_1^u$}
					\put(85,38){$x_1^s$}
					\put(73,45){$L_1^u$}
					\put(84,18){$L_1^s$}
				\end{overpic}
				
				Perturbed
			\end{center}
		\end{minipage}
	\end{center}
	\caption{An example of a perturbation of $\Gamma^3$, with $d_1(\mu)<0$, $d_2(\mu)>0$ and $d_3(\mu)<0$. For simplicity, we omitted the dependence on $\mu$ in the expressions of $x_i^{s,u}$ and $L_i^{s,u}$.}\label{Fig1}
\end{figure}
Observe that a point $q\in l_i$ can be represented as $q=x_i+\lambda v_i$. Hence, for each $i\in\{1,\dots,n\}$, let $b_i^s(\mu)$, $b_i^u(\mu)\in\mathbb{R}$ be such that,
\begin{equation}\label{17}
	x_i^s(\mu)=x_i+b_i^s(\mu)v_i, \quad x_i^u(\mu)=x_i+b_i^u(\mu)v_i.
\end{equation}
The \emph{displacement function} $d_i\colon\Lambda\to\mathbb{R}$ is given by,
\begin{equation}\label{4}
	d_i(\mu)=b_i^u(\mu)-b_i^s(\mu).
\end{equation}
It follows from Perko \cite[Lemma $2$]{Perko1994} and Guckenheimer and Holmes \cite[Section 4.5]{GuckHolmes} that if $\Lambda$ is a small enough, then $d_i$ is a well defined function of class $C^\infty$. Moreover if we write $X_\mu(x)=X(x)+K(x,\mu)$, with $K(x,0)\equiv0$, then the partial derivatives of $d_i$ at $\mu=0$ are given by
\begin{equation}\label{5}
	\frac{\partial d_i}{\partial \mu_j}(0)=\frac{1}{||X(x_i)||}\int_{-\infty}^{+\infty}e^{-\int_{0}^{t}\operatorname{div} X(\gamma_i(s))\;ds}X(\gamma_i(t))\land\frac{\partial K}{\partial\mu_j}(\gamma_i(t),0)\;dt,
\end{equation}
where $(x_1,y_1)\land(x_2,y_2)=x_1y_2-x_2y_1$ and $\gamma_i(t)$ is the parametrization of $L_i$ given by the solution of $X$, with initial condition $\gamma_i(0)=x_i$. For more details, we also refer to the survey of Blows and Perko~\cite{Blows}.

\begin{remark}\label{Remark2}
	From~\cite[Lemma $1$]{Perko1994} and~\cite[Section 4.5]{GuckHolmes} it can be seen that the functions $b_i^u$, $b_i^s\colon\Lambda\to\mathbb{R}$, given at \eqref{4} are also of class $C^\infty$, individually.
\end{remark} 

\begin{remark}
	We observe that both in \cite[Lemma $2$]{Perko1994} and \cite[Section 4.5]{GuckHolmes}, the displacement map \eqref{4} is constructed for loops, i.e. for polycycles $\Gamma^1$. However, it is clear from the proofs of such results that the hypothesis of having a loop is not necessary. Actually, the existence of the polycycle itself is not necessary. Only the existence of a connection between two saddles, which may be the same.
\end{remark} 

\section{The displacement map between non-subsequent saddles}\label{Sec3}

Under the context of Section~\ref{Sec2.2}, let $\sigma_0\in\{-1,1\}$ be given by $\sigma_0=-1$ (resp. $\sigma_0=1)$ if the first return map associated to $\Gamma^n$ is defined in the inner (resp. outer) region defined by $\Gamma^n$. Suppose $n\geqslant2$. Observe that if $\sigma_0d_n(\mu)>0$, then the intersection 
	\[L_n^u(\mu)\cap l_{n-1}=\left\{x_{n-1}^{(1)}(\mu)\right\},\]
is well defined, see Figure~\ref{Fig2}. Similarly, if $\sigma_0d_{n-1}(\mu)<0$, then 
	\[L_{n-1}^s(\mu)\cap l_n=\left\{x_{n}^{(1)}(\mu)\right\}\]
is also well defined, see Figure~\ref{Fig2x}.
\begin{figure}[ht]
	\begin{center}
		\begin{minipage}{8.5cm}
			\begin{center} 
				\begin{overpic}[width=8cm]{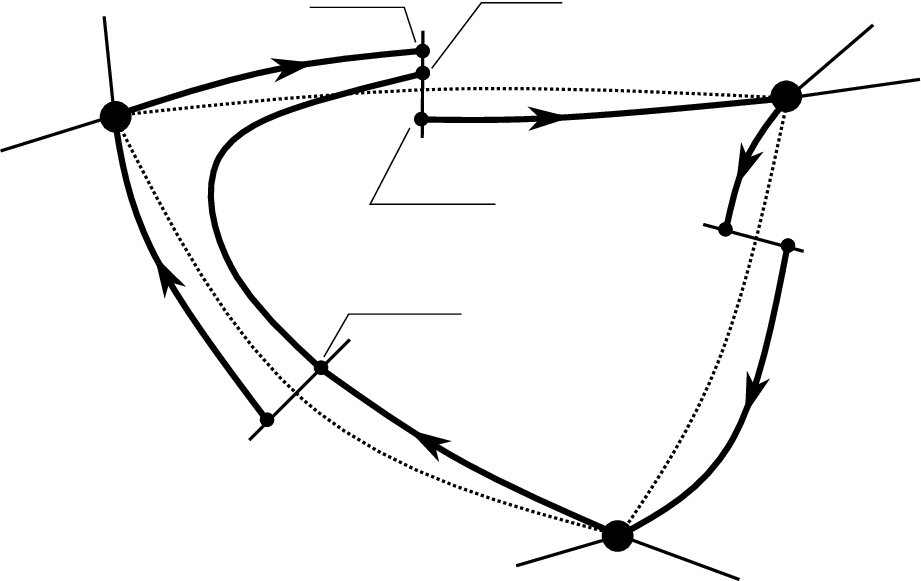} 
					\put(55,40){$x_2^s(\mu)$}
					\put(62,62){$x_2^{(1)}(\mu)$}
					\put(26,12){$x_3^s(\mu)$}
					\put(21,61){$x_2^u(\mu)$}
					\put(51,27.5){$x_3^u(\mu)$}
					
					\put(84,57){$p_2$}
					\put(6,53){$p_3$}
					\put(65,0){$p_1$}				
				\end{overpic}
				
				$\sigma_0=-1$ and $n=3$.
			\end{center}
		\end{minipage}
		\begin{minipage}{8.5cm}
			\begin{center} 
				\begin{overpic}[width=8cm]{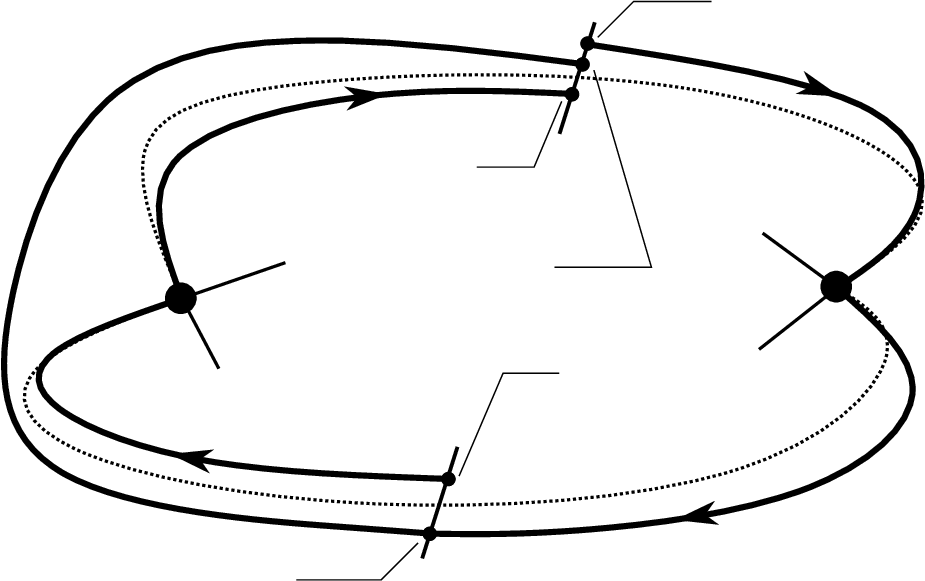} 
					\put(78,62){$x_1^s(\mu)$}
					\put(39,43.5){$x_1^u(\mu)$}
					\put(45,32.5){$x_1^{(1)}(\mu)$}
					\put(62,22){$x_2^s(\mu)$}
					\put(19,-1){$x_2^u(\mu)$}
					
					\put(95,31){$p_1$}
					\put(13,31.5){$p_2$}
				\end{overpic}
				
				$\sigma_0=1$ and $n=2$.
			\end{center}
		\end{minipage}
	\end{center}
	\caption{An illustration of $x_{n-1}^{(1)}$.}\label{Fig2}
	$\;$
	\begin{center}
		\begin{minipage}{8.5cm}
			\begin{center} 
				\begin{overpic}[width=8cm]{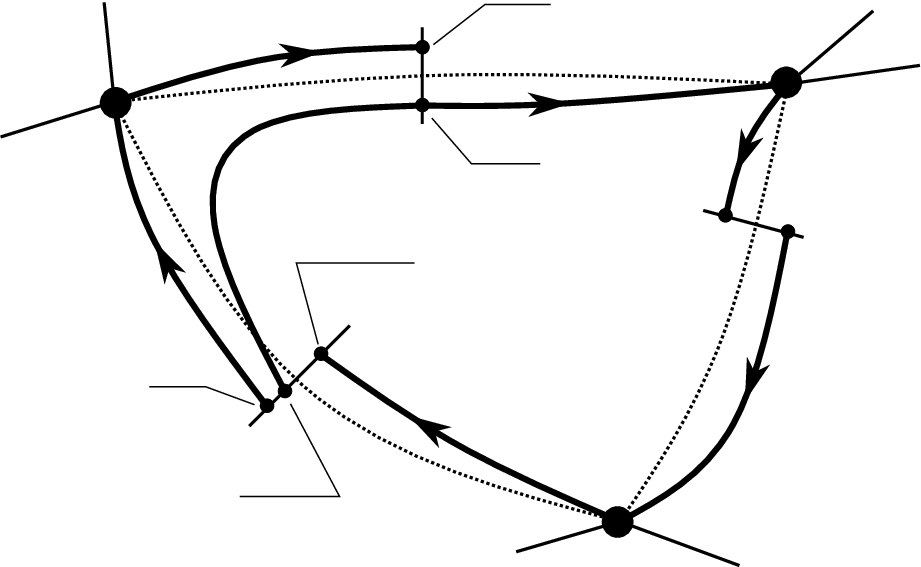} 
					\put(46,32){$x_3^u(\mu)$}
					\put(4,18){$x_3^s(\mu)$}
					\put(11,6){$x_3^{(1)}(\mu)$}
					
					\put(61,60){$x_2^u(\mu)$}
					\put(60,43){$x_2^s(\mu)$}
					
					\put(84,57){$p_2$}
					\put(6,53){$p_3$}
					\put(65,0){$p_1$}				
				\end{overpic}
				
				$\sigma_0=-1$ and $n=3$.
			\end{center}
		\end{minipage}
		\begin{minipage}{8.5cm}
			\begin{center} 
				\begin{overpic}[width=8cm]{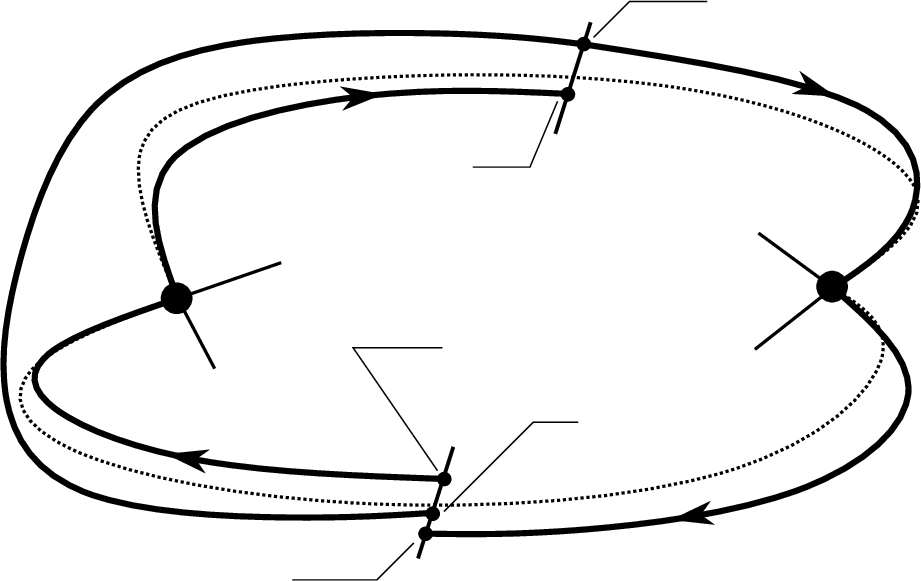} 
					\put(78,62){$x_1^s(\mu)$}
					\put(39,43.5){$x_1^u(\mu)$}
					\put(64,16){$x_2^{(1)}(\mu)$}
					\put(49,24.5){$x_2^s(\mu)$}
					\put(19,-1){$x_2^u(\mu)$}
					
					\put(95,31){$p_1$}
					\put(13,31.5){$p_2$}
				\end{overpic}
				
				$\sigma_0=1$ and $n=2$.
			\end{center}
		\end{minipage}
	\end{center}
	\caption{An illustration of $x_n^{(1)}$.}\label{Fig2x}
\end{figure} 
Similarly to \eqref{17}, let (when well defined) $b_{n-1}^{(1)}(\mu)$, $b_n^{(1)}(\mu)\in\mathbb{R}$ be such that,
\begin{equation}\label{12}
	x_{n-1}^{(1)}(\mu)=x_{n-1}+b_{n-1}^{(1)}(\mu)v_{n-1}, \quad x_n^{(1)}(\mu)=x_n+b_n^{(1)}(\mu)v_n.
\end{equation}
Suppose $r_n>1$. In this case, we define the displacement map $d_{n-1}^{(1)}\colon\Lambda\to\mathbb{R}$ by
\begin{equation}\label{34}
	d_{n-1}^{(1)}(\mu)=\left\{\begin{array}{ll} 
		b_{n-1}^u(\mu)-b_{n-1}^s(\mu), & \text{if } \sigma_0d_n(\mu)\leqslant0, \vspace{0.2cm} \\
		b_{n-1}^{(1)}(\mu)-b_{n-1}^s(\mu), & \text{if } \sigma_0d_n(\mu)>0.
	\end{array}\right.
\end{equation}

\begin{proposition}\label{P2}
	If $r_n>1$, then the displacement map \eqref{34} is a well defined function of class $C^1$. 
\end{proposition}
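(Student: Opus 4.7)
The plan is to partition the parameter neighborhood into the open sets $\Lambda^{+}=\{\mu\in\Lambda:\sigma_0 d_n(\mu)>0\}$ and $\Lambda^{-}=\{\mu\in\Lambda:\sigma_0 d_n(\mu)<0\}$ together with the separating set $\Lambda^{0}=\{\mu\in\Lambda:d_n(\mu)=0\}$, to show that $d_{n-1}^{(1)}$ is $C^{\infty}$ on each open piece, and then to use Proposition~\ref{MV1} to glue the two pieces along $\Lambda^{0}$ in a $C^{1}$ way. On $\Lambda^{-}\cup\Lambda^{0}$, $d_{n-1}^{(1)}$ coincides by definition with $b_{n-1}^{u}-b_{n-1}^{s}$, which is $C^{\infty}$ by Remark~\ref{Remark2}.

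For $\Lambda^{+}$, I would introduce two $C^{\infty}$ transverse sections $\Sigma_\sigma$ and $\Sigma_\tau$ near $p_n$ as in Section~\ref{Sec2.1}, with adapted coordinates vanishing on the local stable and unstable separatrices $L_n^s(\mu)$ and $L_{n-1}^u(\mu)$, respectively. By smooth dependence of the flow away from singularities, transport from $l_n$ to $\Sigma_\sigma$ defines a $C^{\infty}$ coordinate $s=s(\mu)$ of $x_n^u(\mu)$ on $\Sigma_\sigma$, satisfying $s(\mu)=0\iff d_n(\mu)=0$ and, after fixing a consistent orientation, $s(\mu)>0$ exactly on $\Lambda^{+}$. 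Similarly, transport from $\Sigma_\tau$ to $l_{n-1}$ yields a $C^{\infty}$ function $\psi(t,\mu)$ giving the $l_{n-1}$-coordinate of the image of the point at coordinate $t\in\Sigma_\tau$, with in particular $\psi(0,\mu)=b_{n-1}^{u}(\mu)$. Writing
\[b_{n-1}^{(1)}(\mu)=\psi\bigl(D(s(\mu),\mu),\mu\bigr),\]
where $D$ is the Dulac map at $p_n(\mu)$, and noting that $s(\mu)>0$ on $\Lambda^{+}$, this exhibits $b_{n-1}^{(1)}$, and hence $d_{n-1}^{(1)}$, as a composition of $C^{\infty}$ maps on $\Lambda^{+}$.

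The crux of the argument, and the only point where the hypothesis $r_n>1$ is used, is the gluing across $\Lambda^{0}$ via Proposition~\ref{MV1}: that result extends $D$ to $s=0$ with $D(0,\mu)=\partial_s D(0,\mu)=\partial_{\mu_j}D(0,\mu)=0$. Continuity at $\mu_0\in\Lambda^{0}$ is immediate, since $s(\mu)\to 0$ forces $\psi(D(s(\mu),\mu),\mu)\to\psi(0,\mu_0)=b_{n-1}^{u}(\mu_0)$, matching the value from $\Lambda^{-}$. For the $C^{1}$ matching, the chain rule applied on $\Lambda^{+}$ gives, writing $s=s(\mu)$ and $D=D(s,\mu)$ for brevity,
\[\frac{\partial b_{n-1}^{(1)}}{\partial\mu_j}(\mu)=\partial_{t}\psi(D,\mu)\Bigl[\partial_s D(s,\mu)\,\frac{\partial s}{\partial\mu_j}(\mu)+\partial_{\mu_j}D(s,\mu)\Bigr]+\partial_{\mu_j}\psi(D,\mu),\]
and the vanishing partials of $D$ at $s=0$ collapse the bracket in the limit, leaving $\partial_{\mu_j}\psi(0,\mu_0)=\partial_{\mu_j} b_{n-1}^{u}(\mu_0)$. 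Thus the partials of $d_{n-1}^{(1)}=b_{n-1}^{(1)}-b_{n-1}^{s}$ from $\Lambda^{+}$ converge to those of $b_{n-1}^{u}-b_{n-1}^{s}$ at $\mu_0$, which agree with the limit from $\Lambda^{-}$. The main obstacle is therefore not the smoothness on the two open pieces but precisely this boundary matching, and without the flatness provided by Proposition~\ref{MV1} (i.e.\ without $r_n>1$) the Dulac map would generically blow up in derivatives at $s=0$ and $d_{n-1}^{(1)}$ would fail to be even continuous across $\Lambda^{0}$.
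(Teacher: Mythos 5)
Your proposal is correct and follows essentially the same route as the paper: decompose $\Lambda$ according to the sign of $\sigma_0 d_n$, observe smoothness on each open piece, express the nontrivial piece through the Dulac map at $p_n$ evaluated at a coordinate vanishing exactly on $d_n^{-1}(0)$, and use the $C^1$-flatness of $D$ at $s=0$ from Proposition~\ref{MV1} (the only place $r_n>1$ enters) to match values and first derivatives across the boundary. The only cosmetic difference is that you insert auxiliary transport maps to sections near $p_n$, whereas the paper invokes Remark~\ref{Remark1} to apply the Marin--Villadelprat result directly with $l_n$ and $l_{n-1}$ as the transverse sections, yielding the explicit formula $d_{n-1}^{(1)}=d_{n-1}+R_{\sigma_0}$.
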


\begin{proof} For simplicity, assume first that $\sigma_0=-1$, i.e. the displacement map is in the inner region of $\Gamma^n$. In this case, it follows from \eqref{34} that,
\begin{equation}\label{58}
	d_{n-1}^{(1)}(\mu)=\left\{\begin{array}{ll} 
		b_{n-1}^u(\mu)-b_{n-1}^s(\mu), & \text{if } d_n(\mu)\geqslant0, \vspace{0.2cm} \\
		b_{n-1}^{(1)}(\mu)-b_{n-1}^s(\mu), & \text{if } d_n(\mu)<0.
	\end{array}\right.
\end{equation}	
Let $\mu\in\Lambda$ be such that $d_n(\mu)<0$. It follows from \eqref{4} and \eqref{58} that,
\begin{equation}\label{13}
	d_{n-1}^{(1)}(\mu)=d_{n-1}(\mu)+\bigl(b_{n-1}^{(1)}(\mu)-b_{n-1}^u(\mu)\bigr).
\end{equation}  
For every $i\in\{1,\dots,n\}$, let $u_i=-v_i$ (recall from \eqref{55} that $v_i$ points outwards $\Gamma^n$ and thus $u_i$ points inwards). Given $q\in l_n$, observe that there exists a unique $\xi\geqslant0$ such that,
\begin{equation}\label{14}
	q=x_n^s(\mu)+\xi u_n.
\end{equation}
In particular, $\xi=0$ if and only if, $q=x_n^s(\mu)$. If $\xi\geqslant0$ is small enough, then the orbit of $X_{\mu}$ through $q=q(\xi)$ will intersect $l_{n-1}$ in the point $p=p(\xi)$ given 
\begin{equation}\label{18}
	p=x_{n-1}^u(\mu)+D_n(\xi,\mu)u_{n-1},
\end{equation}
where $D_n\colon l_n\times\Lambda\to l_{n-1}$ is the Dulac map associated to $p_n(\mu)$, see Figure~\ref{Fig2}. From \eqref{17}, 
\begin{equation}\label{56}
	x_n^u(\mu)=\bigl(x_n+b_n^s(\mu)v_n\bigr)+\bigl(b_n^u(\mu)-b_n^s(\mu)\bigr)v_n=x_n^s(\mu)+d_n(\mu)v_n,
\end{equation}
and from \eqref{12} we have,
\begin{equation}\label{57}
	x_{n-1}^{(1)}(\mu)=\bigl(x_{n-1}+b_{n-1}^u(\mu)v_{n-1}\bigr)+\bigl(b_{n-1}^{(1)}(\mu)-b_{n-1}^u(\mu)\bigr)v_{n-1}=x_{n-1}^u(\mu)+\bigl(b_{n-1}^{(1)}(\mu)-b_{n-1}^u(\mu)\bigr)v_{n-1}.
\end{equation}
Hence, if we let $q=x_n^u(\mu)$, then it follows from \eqref{14} and \eqref{56} that $\xi=-d_n(\mu)$ (recall that $d_n(\mu)<0$). Since $x_{n-1}^{(1)}(\mu)$ is the intersection of the positive orbit through $x_n^u(\mu)$ with $l_{n-1}$, it follows from \eqref{18} and \eqref{57} that,
	\[b_{n-1}^{(1)}(\mu)-b_{n-1}^u(\mu)=-D_n(-d_n(\mu),\mu).\]
Therefore, it follows from \eqref{13} that if $d_n(\mu)<0$, then
\begin{equation}\label{15}
	d_{n-1}^{(1)}(\mu)=d_{n-1}(\mu)-D_n(-d_n(\mu),\mu).
\end{equation}
Hence, as a consequence of \eqref{58} and \eqref{15} we arrive to
	\[d_{n-1}^{(1)}(\mu)=d_{n-1}(\mu)+R(\mu),\]
where $R\colon\Lambda\to\mathbb{R}$ is given by
	\[R(\mu)=\left\{\begin{array}{ll} 
		0, & \text{if } d_n(\mu)\geqslant0, \vspace{0.2cm} \\
		-D_n(-d_n(\mu),\mu), & \text{if } d_n(\mu)<0.
	\end{array}\right.\]
Since $r_n>1$, it follows from Proposition~\ref{MV1} and Remark~\ref{Remark1} that if $d_n(\mu)<0$, then
\begin{equation}\label{60}
	\frac{\partial R}{\partial\mu_j}(\mu)=\frac{\partial D_n}{\partial s}(-d_n(\mu),\mu)\frac{\partial d_n}{\partial\mu_j}(\mu)-\frac{\partial D_n}{\partial\mu_j}(-d_n(\mu),\mu).
\end{equation}
Hence, if we take $\mu_0\in d_n^{-1}(\{0\})$, then from \eqref{60} and Proposition~\ref{MV1} we know that,
	\[\lim\limits_{\mu\to\mu_0}\frac{\partial R}{\partial\mu_j}(\mu)=0, \quad \lim\limits_{\mu\to\mu_0}R(\mu)=0.\]
Therefore, $R$ is of class $C^1$ and thus $d_{n-1}^{(1)}$ is also $C^1$. The proof for the case $\sigma_0=1$ follows similarly. We only observe that in general we have $u_i=\sigma_0v_i$ and
	\[d_{n-1}^{(1)}(\mu)=d_{n-1}(\mu)+R_{\sigma_0}(\mu),\]
where,
	\[R_{\sigma_0}(\mu)=\left\{\begin{array}{ll} 
		0, & \text{if } \sigma_0d_n(\mu)\leqslant0, \vspace{0.2cm} \\
		\sigma_0D_n(\sigma_0d_n(\mu),\mu), & \text{if } \sigma_0d_n(\mu)>0.
	\end{array}\right.\]
This finishes the proof. \end{proof}

\begin{remark}\label{Remark3}
	If $r_n>1$, then it follows from \eqref{34} that if $\sigma_0d_n(\mu)>0$ and $d_{n-1}^{(1)}(\mu)=0$, then we have a heteroclinic (or homoclinic, if $n=2$) connection from $p_1$ to $p_{n-1}$, bypassing $p_n$. In particular, if there is $\mu_0\in\Lambda$ such that $\sigma_0d_n(\mu_0)>0$, then from the continuity of $d_n$ we know that there is a neighborhood $\Lambda_1\subset\Lambda$ of $\mu_0$ such that $\sigma_0d_n(\mu)>0$ for every $\mu\in\Lambda_1$. Hence, if $\mu\in\Lambda_1$ is such that $d_{n-1}^{(1)}(\mu)=0$, then there is a connection from $p_1$ to $p_{n-1}$, bypassing $p_n$.
\end{remark}

Suppose now that $r_n<1$. In this case, we define the displacement map $d_{n-1}^{(1)}\colon\Lambda\to\mathbb{R}$ by,
\begin{equation}\label{59}
	d_{n-1}^{(1)}(\mu)=\left\{\begin{array}{ll} 
		b_{n}^u(\mu)-b_{n}^s(\mu), & \text{if } \sigma_0d_{n-1}(\mu)\geqslant0, \vspace{0.2cm} \\
		b_{n}^u(\mu)-b_{n}^{(1)}(\mu), & \text{if } \sigma_0d_{n-1}(\mu)<0.
	\end{array}\right.
\end{equation}

\begin{proposition}\label{P2x}
	If $r_n<1$, then the displacement map \eqref{59} is a well defined function of class $C^1$. 
\end{proposition}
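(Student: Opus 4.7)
The plan is to mirror the argument of Proposition~\ref{P2}, swapping the roles of the sections $l_n$ and $l_{n-1}$ and replacing the forward Dulac map $D_n$ supplied by Proposition~\ref{MV1} by the inverse Dulac map $D_n^{-1}$ of \eqref{54}, whose regularity is now governed by Proposition~\ref{MV2} (applicable because $r_n<1$ is exactly the hypothesis there). As before, I would first treat $\sigma_0=-1$ (inner region) and deduce the case $\sigma_0=1$ at the end by the same sign bookkeeping used in Proposition~\ref{P2}.

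For $\sigma_0=-1$ the definition \eqref{59} reads $d_{n-1}^{(1)}(\mu)=d_n(\mu)$ on $\{d_{n-1}(\mu)\leqslant 0\}$ and $d_{n-1}^{(1)}(\mu)=d_n(\mu)-\bigl(b_n^{(1)}(\mu)-b_n^s(\mu)\bigr)$ on $\{d_{n-1}(\mu)>0\}$. To compute $b_n^{(1)}(\mu)-b_n^s(\mu)$ I would parameterize $l_{n-1}$ as $x_{n-1}^u(\mu)+\xi u_{n-1}$ with $u_{n-1}=-v_{n-1}$, so that by \eqref{17} the value $\xi=d_{n-1}(\mu)>0$ corresponds precisely to $x_{n-1}^s(\mu)$. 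The backward orbit of $X_\mu$ through this point traces the continuation of $L_{n-1}^s(\mu)$ across the hyperbolic sector of $p_n(\mu)$ and meets $l_n$ at the point $x_n^s(\mu)+D_n^{-1}(\xi,\mu)\,u_n$; by the very construction of $x_n^{(1)}(\mu)$ (cf.~Figure~\ref{Fig2x}) this landing point is exactly $x_n^{(1)}(\mu)$. Comparison with \eqref{12} then gives $b_n^{(1)}(\mu)-b_n^s(\mu)=-D_n^{-1}(d_{n-1}(\mu),\mu)$, hence
\[
    d_{n-1}^{(1)}(\mu)=d_n(\mu)+R(\mu),\qquad R(\mu)=\left\{\begin{array}{ll}0,& \text{if } d_{n-1}(\mu)\leqslant 0, \vspace{0.2cm}\\ D_n^{-1}(d_{n-1}(\mu),\mu),& \text{if } d_{n-1}(\mu)>0.\end{array}\right.
\]

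It then remains to check that $R\in C^1(\Lambda)$. Since $r_n<1$, Proposition~\ref{MV2}, combined with Remark~\ref{Remark1} (which removes the need for any preliminary normalization near $p_n(\mu)$), extends $D_n^{-1}(\cdot,\mu)$ in a $C^1$ fashion to $s=0$ with vanishing partial derivatives in $s$ and in each $\mu_j$. Differentiating $R$ by the chain rule on $\{d_{n-1}>0\}$ and using the smoothness of $d_{n-1}$ (Remark~\ref{Remark2}) together with these flatness estimates, the limits of $R$ and of each $\partial R/\partial\mu_j$ as $\mu\to\mu_0\in d_{n-1}^{-1}(\{0\})$ both vanish, matching the values on the complementary region where $R\equiv 0$. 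Hence $R$, and therefore $d_{n-1}^{(1)}=d_n+R$, is of class $C^1$. The case $\sigma_0=1$ follows verbatim after replacing $d_{n-1}(\mu)$ by $\sigma_0 d_{n-1}(\mu)$ and multiplying the Dulac contribution by $\sigma_0$. The main (and essentially only) subtle point throughout is the sign bookkeeping at the boundary $d_{n-1}=0$; once the correct sign conventions are pinned down, the $C^1$ regularity drops out directly from Proposition~\ref{MV2}.
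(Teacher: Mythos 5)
Your argument is correct and follows essentially the same route as the paper: reduce to $\sigma_0=-1$, write $d_{n-1}^{(1)}=d_n+R$ with $R$ given piecewise by the inverse Dulac map $D_n^{-1}$ evaluated at $d_{n-1}(\mu)$, and invoke Proposition~\ref{MV2} (together with Remark~\ref{Remark1}) to obtain the flatness of $D_n^{-1}$ and its derivatives at $s=0$ that makes $R$, hence $d_{n-1}^{(1)}$, of class $C^1$ across $\{d_{n-1}=0\}$. Your sign bookkeeping, with the positive quantity $d_{n-1}(\mu)$ as the argument of $D_n^{-1}$, is in fact tidier than the formula displayed in the paper's own (very terse) proof.
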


\begin{proof} The proof follows similarly to the proof of Proposition~\ref{P2}. We only observe that in this case we have,
	\[d_{n-1}^{(1)}(\mu)=d_n(\mu)+R_{\sigma_0}^*(\mu),\]
with,
	\[R_{\sigma_0}^*(\mu)=\left\{\begin{array}{ll} 
		0, & \text{if } \sigma_0d_{n-1}(\mu)\geqslant0, \vspace{0.2cm} \\
		\sigma_0D_n^{-1}(\sigma_0d_{n-1}(\mu),\mu), & \text{if } \sigma_0d_{n-1}(\mu)<0,
	\end{array}\right.\]
where $D_n$ is the displacement map associated to $p_n$. \end{proof}

In case $r_n<1$ a remark similar to Remark~\ref{Remark3} could be done.

\begin{corollary}\label{Coro1}
	If $r_n\neq1$, then
		\[\frac{\partial d_{n-1}^{(1)}}{\partial\mu_j}(0)=\left\{\begin{array}{ll} 
			\displaystyle\frac{\partial d_{n-1}}{\partial\mu_j}(0), & \text{if } r_n>1, \vspace{0.2cm} \\
			\displaystyle\frac{\partial d_{n}}{\partial\mu_j}(0), & \text{if } r_n<1,
		\end{array}\right.\]
	for every $j\in\{1,\dots,N\}$.
\end{corollary}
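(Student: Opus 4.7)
The plan is to read off the result directly from the decompositions of $d_{n-1}^{(1)}$ established inside the proofs of Propositions~\ref{P2} and~\ref{P2x}, evaluated at the unperturbed parameter $\mu=0$. The crucial starting remark is that since $X_0=X$ has the polycycle $\Gamma^n$ intact, every connection $L_i$ is preserved and hence $d_i(0)=0$ for all $i\in\{1,\dots,n\}$; in particular $d_n(0)=0$ and $d_{n-1}(0)=0$.

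For the case $r_n>1$, I would recall from the proof of Proposition~\ref{P2} the decomposition
\[ d_{n-1}^{(1)}(\mu) = d_{n-1}(\mu) + R_{\sigma_0}(\mu), \]
where $R_{\sigma_0}$ vanishes identically on the open set $\{\sigma_0 d_n(\mu)<0\}$ and equals $\sigma_0 D_n(\sigma_0 d_n(\mu),\mu)$ on $\{\sigma_0 d_n(\mu)>0\}$. The corollary reduces to showing $\partial R_{\sigma_0}/\partial\mu_j(0)=0$. On the region where $R_{\sigma_0}\equiv 0$ this is trivial, while on the complementary region the chain rule gives $\partial R_{\sigma_0}/\partial\mu_j$ as a combination of $\partial D_n/\partial s$, $\partial D_n/\partial\mu_j$ and $\partial d_n/\partial\mu_j$, exactly as in equation~\eqref{60}. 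Since $d_n(0)=0$, Proposition~\ref{MV1} ensures that both $\partial D_n/\partial s(s,\mu)$ and $\partial D_n/\partial\mu_j(s,\mu)$ tend to $0$ as $s\to 0^+$, so the whole expression vanishes in the limit $\mu\to 0$. Combined with the $C^1$-regularity of $R_{\sigma_0}$ already established in Proposition~\ref{P2}, this forces $\partial R_{\sigma_0}/\partial\mu_j(0)=0$, which yields the first branch of the formula.

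The case $r_n<1$ is handled identically: one uses the decomposition $d_{n-1}^{(1)}(\mu)=d_n(\mu)+R_{\sigma_0}^*(\mu)$ from the proof of Proposition~\ref{P2x}, replaces the Dulac map $D_n$ by the inverse Dulac map $D_n^{-1}$, and invokes Proposition~\ref{MV2} to conclude that the partials of $D_n^{-1}$ vanish as $s\to 0^+$. Since $d_{n-1}(0)=0$, the same limit argument gives $\partial R_{\sigma_0}^*/\partial\mu_j(0)=0$, yielding the second branch. There is no genuine technical obstacle here, as the calculation has essentially already been performed inside Propositions~\ref{P2} and~\ref{P2x}; the one subtlety worth checking is that the one-sided limit computed on the open set where the Dulac contribution is active really coincides with the two-sided partial at the boundary parameter $\mu=0$, which is guaranteed by the already-established $C^1$-regularity.
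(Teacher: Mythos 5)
Your proposal is correct and follows essentially the same route the paper intends: the corollary is meant to be read off from the decompositions $d_{n-1}^{(1)}=d_{n-1}+R_{\sigma_0}$ (resp. $d_{n-1}^{(1)}=d_{n}+R_{\sigma_0}^{*}$) established in the proofs of Propositions~\ref{P2} and~\ref{P2x}, together with the fact that $d_n(0)=d_{n-1}(0)=0$ and the vanishing of the partial derivatives of the Dulac map at $s=0$ from Propositions~\ref{MV1} and~\ref{MV2}. Your closing observation that the $C^1$-regularity of $R_{\sigma_0}$ is what lets the one-sided limit determine the derivative at $\mu=0$ is exactly the point the paper relies on.
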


\begin{remark}\label{Remark5} As a consequence of Propositions~\ref{P2} and \ref{P2x},  for $d_{n-1}^{(1)},$ defined as in  
\eqref{34} or \eqref{59}, respectively, to be of class $C^1$  it is sufficient to have $r_n\neq1$. If $r_n=1$, then it follows from \cite{MarVil2021} that we can write
		\[D(s,\mu)=\Delta_{00}(\mu)s^{r(\mu)}+\mathscr{R}(s,\mu),\]
	with $\mathscr{R}\colon[0,\varepsilon)\times\Lambda\to\mathbb{R}$ continuous, $C^1$ in $(0,\varepsilon)\times\Lambda$ and such that
		\[\mathscr{R}(0,\mu)=0, \quad \lim\limits_{s\to0^+}\frac{\partial\mathscr{R}}{\partial\mu_j}(0,\mu)=0,\]
	for every $\mu\in\Lambda$ and $j\in\{1,\dots,N\}$. In particular, $D$ can be continuously extended to $s=0$ and, in relation to the parameter $\mu$, can also be $C^1$-extended. However, such $C^1$-extension does not necessarily hold in relation to $s$ and thus we cannot in general apply the limit in \eqref{60}. Hence, if $r_n=1$, then \eqref{34} and \eqref{59} are both well defined continuous functions, but not necessarily of class $C^1$.
\end{remark}

\begin{remark}\label{Remark7}
	Our construction of the map $d_{n-1}^{(1)}$ given in this section is inspired on the construction of the map $d_{n-1}^*$ of \cite[Lemma $2.2$]{HanWuBi2004}. However, in the construction given there  the authors do not give the proof that the Dulac map in relation to the perturbative parameter $\mu$ is continuously differentiate. We prove this regularity by using the recent works~\cites{MarVil2020,MarVil2021,MarVil2024} that give properties of the Dulac map not know at that time. Moreover, when defining $d_{n-1}^*$ the authors in \cite{HanWuBi2004} seem not to be aware that the points $x_{n-1}^{(1)}(\mu)$ and $x_{n}^{(1)}(\mu)$ are not well defined for every $\mu\in\Lambda$. Hence, in our proof we need to define $d_{n-1}^{(1)}$ in a multiple-folded way, depending on the sign of $\sigma_0$, $d_n(\mu)$ and $d_{n-1}(\mu)$. 
\end{remark}

\section{Some technical results}\label{Sec4}

\subsection{Polynomial approximation of a bump function}\label{Sec4.1}

Let $F\colon [0,1]^2\to\mathbb{R}$ be a map of class $C^r$, $r\geqslant0$. The \emph{Bernstein polynomial} associated to $F$ is given by
	\[B_{m,n}^F(x_1,x_2)=\sum_{r=0}^{m}\sum_{s=0}^{n}F\left(\frac{r}{m},\frac{s}{n}\right)\binom{m}{r}\binom{n}{s}x_1^rx_2^s(1-x_1)^{m-r}(1-x_2)^{n-s},\]
where $\binom{n}{k}=\frac{n!}{k!(n-k)!}$. An important property of the Bernstein polynomials is that $B_{n,m}\rightrightarrows F$ uniformly in the $C^r$-topology. More precisely, we have the following theorem (see  Kingsley~\cite{King1949}).

\begin{proposition}\label{King}
	If $F\colon [0,1]^2\to\mathbb{R}$ is of class $C^r$, $r\geqslant0$ finite, then
		\[\lim\limits_{(n,m)\to\infty}\frac{\partial^{|k|}B_{m,n}^F}{\partial x_1^{k_1}\partial x_2^{k_2}}(x_1,x_2)=\frac{\partial^{|k|}F}{\partial x_1^{k_1}\partial x_2^{k_2}}(x_1,x_2),\]
	uniformly in $(x_1,x_2)\in[0,1]^2$, where $k=(k_1,k_2)\in\mathbb{Z}_{\geqslant0}^2$, $|k|=k_1+k_2$ and $|k|\leqslant r$. 
\end{proposition}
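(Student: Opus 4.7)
The statement is the standard joint $C^r$ convergence theorem for two-variable Bernstein polynomials, classically attributed to Kingsley. The plan is to reduce it to the classical $C^0$ Bernstein approximation theorem on $[0,1]^2$ (the case $|k|=0$, which is the standard tensor product of the one-variable Weierstrass--Bernstein result). Write $F_k:=\partial^{|k|}F/(\partial x_1^{k_1}\partial x_2^{k_2})$, which lies in $C^0([0,1]^2)$ by hypothesis and is therefore uniformly continuous on the compact $[0,1]^2$, with some modulus $\omega_{F_k}$.

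First I would establish the two-variable derivative identity
\[\frac{\partial^{|k|}B_{m,n}^F}{\partial x_1^{k_1}\partial x_2^{k_2}}(x_1,x_2)=\frac{m!\,n!}{(m-k_1)!\,(n-k_2)!}\sum_{r=0}^{m-k_1}\sum_{s=0}^{n-k_2}\bigl(\Delta_{1/m,1/n}^{k_1,k_2}F\bigr)\!\left(\tfrac{r}{m},\tfrac{s}{n}\right)\,b_{r,s}^{m-k_1,n-k_2}(x_1,x_2),\]
where $b_{r,s}^{M,N}(x_1,x_2)=\binom{M}{r}\binom{N}{s}x_1^r x_2^s(1-x_1)^{M-r}(1-x_2)^{N-s}$ and $\Delta_{1/m,1/n}^{k_1,k_2}$ denotes the mixed iterated forward difference of orders $(k_1,k_2)$ in $(x_1,x_2)$. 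This follows by differentiating the Bernstein basis in each variable and telescoping, exactly as in the one-variable case. Next, the integral representation of iterated finite differences together with the mean value theorem for multiple integrals supplies points $\xi_{r,s}\in[r/m,(r+k_1)/m]$ and $\eta_{r,s}\in[s/n,(s+k_2)/n]$ with
\[\bigl(\Delta_{1/m,1/n}^{k_1,k_2}F\bigr)\!\left(\tfrac{r}{m},\tfrac{s}{n}\right)=\tfrac{1}{m^{k_1}n^{k_2}}F_k(\xi_{r,s},\eta_{r,s}),\]
valid because $F\in C^r$ with $k_1+k_2\leqslant r$ and mixed partials commute by Schwarz's theorem.

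Combining both displays with $\sum_{r,s}b_{r,s}^{M,N}(x_1,x_2)\equiv 1$ and uniform continuity of $F_k$ rewrites the target mixed derivative as
\[\frac{\partial^{|k|}B_{m,n}^F}{\partial x_1^{k_1}\partial x_2^{k_2}}(x_1,x_2)=\alpha_{m,n}\Bigl(B_{m-k_1,n-k_2}^{F_k}(x_1,x_2)+E_{m,n}(x_1,x_2)\Bigr),\]
with $\alpha_{m,n}=\frac{m!\,n!}{(m-k_1)!\,(n-k_2)!\,m^{k_1}n^{k_2}}\to 1$ and $\|E_{m,n}\|_{\infty}\leqslant\omega_{F_k}\!\bigl(\sqrt{2}\max(k_1,k_2)/\min(m,n)\bigr)\to 0$. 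Applying the classical two-variable $C^0$ Bernstein theorem to the continuous function $F_k$ gives $B_{m-k_1,n-k_2}^{F_k}\rightrightarrows F_k$ uniformly on $[0,1]^2$, and the conclusion follows. The whole argument is essentially compactness-driven bookkeeping; the only point that requires some care is the finite-difference-to-derivative passage on mixed differences, but once the iterated mean value estimate is in hand no deeper obstacle arises.
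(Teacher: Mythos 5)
Your argument is correct, but note that the paper does not actually prove Proposition~\ref{King}: it is stated as a known result with a citation to Kingsley~\cite{King1949}, so there is no in-paper proof to compare against. What you have written is essentially the classical proof from that reference: the tensor-product derivative identity expressing $\partial^{|k|}B^F_{m,n}$ through mixed forward differences, the finite-difference mean value theorem converting $\bigl(\Delta^{k_1,k_2}_{1/m,1/n}F\bigr)(r/m,s/n)$ into $m^{-k_1}n^{-k_2}F_k(\xi_{r,s},\eta_{r,s})$ (valid by Schwarz since $k_1+k_2\leqslant r$), and the reduction to the $C^0$ Bernstein--Weierstrass theorem for $F_k$ via $\alpha_{m,n}\to1$ and the modulus-of-continuity bound on the error term. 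The only quibble is cosmetic: the precise constant in your bound $\omega_{F_k}\bigl(\sqrt{2}\max(k_1,k_2)/\min(m,n)\bigr)$ should also absorb the discrepancy between the nodes $r/m$ and $r/(m-k_1)$, but that discrepancy is likewise $O(1/\min(m,n))$, so the conclusion is unaffected.
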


In particular, we can use Proposition~\ref{King} to construct suitable polynomial approximations of a given bump function. More precisely, given $\delta_2>\delta_1>0$ and $c\in\mathbb{R}^2$, we say that a $C^\infty$-function $\varphi\colon\mathbb{R}^2\to[0,1]$ is a $(\delta_1,\delta_2,c)$-bump function if
	\[\varphi(x)=\left\{\begin{array}{ll} 1, & \text{if } ||x-c||\leqslant \delta_1, \vspace{0.2cm} \\ 0, & \text{if } ||x-c||\geqslant\delta_2. \end{array}\right.\]
	
\begin{proposition}\label{P1}
	Set $r\geqslant0$ finite and let $\varphi\colon\mathbb{R}^2\to[0,1]$ be a $(\delta_1,\delta_2,c)$-bump function. Then for every compact $B\subset\mathbb{R}^2$ and $\varepsilon>0$, there is a polynomial $q\colon\mathbb{R}^2\to\mathbb{R}$ such that 
	\begin{equation}\label{19}
		\max_{\substack{x\in B \\ |k|\leqslant r}}\left|\frac{\partial^{|k|}\varphi}{\partial x_1^{k_1}\partial x_2^{k_2}}(x)-\frac{\partial^{|k|}q}{\partial x_1^{k_1}\partial x_2^{k_2}}(x)\right|<\varepsilon,
	\end{equation}
	where $k=(k_1,k_2)\in\mathbb{Z}^2_{\geqslant0}$ and $|k|=k_1+k_2$. Moreover, $q$ can be chosen such that 
	\begin{equation}\label{20}
		\varphi(x)+\frac{1}{4}\varepsilon<q(x)<\varphi(x)+\frac{3}{4}\varepsilon,
	\end{equation}
	for every $x\in B$. In particular, $q(x)>0$ for every $x\in B$.
\end{proposition}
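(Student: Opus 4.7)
The plan is to reduce \eqref{19} to Proposition~\ref{King} via an affine change of coordinates, obtain a polynomial $Q$ that $C^r$-approximates $\varphi$ on $B$ to within $\varepsilon/4$, and then set $q=Q+\varepsilon/2$. The constant vertical shift preserves every derivative of positive order and simply translates the graph of $Q$ upward by $\varepsilon/2$, which is precisely what one needs so that \eqref{20} comes out automatically from the $\varepsilon/4$-closeness of $Q$ to $\varphi$.

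\textbf{Reduction to the unit square.} First I would pick a closed rectangle $R=[a_1,b_1]\times[a_2,b_2]$ with $B\subset R$ and introduce the affine homeomorphism $T\colon[0,1]^2\to R$ defined by $T(u_1,u_2)=\bigl(a_1+(b_1-a_1)u_1,\,a_2+(b_2-a_2)u_2\bigr)$. The pullback $\psi=\varphi\circ T$ is of class $C^\infty$ on $[0,1]^2$, so Proposition~\ref{King} applies. Setting $M=\max\bigl\{(b_1-a_1)^{-k_1}(b_2-a_2)^{-k_2}:|k|\leqslant r\bigr\}$, a finite positive constant because the set of admissible multi-indices is finite, one may choose $m,n$ large enough that
\[\max_{\substack{u\in[0,1]^2\\|k|\leqslant r}}\left|\frac{\partial^{|k|}\psi}{\partial u_1^{k_1}\partial u_2^{k_2}}(u)-\frac{\partial^{|k|}B_{m,n}^{\psi}}{\partial u_1^{k_1}\partial u_2^{k_2}}(u)\right|<\frac{\varepsilon}{4M}.\]
Then $Q=B_{m,n}^{\psi}\circ T^{-1}$ is a genuine polynomial in $(x_1,x_2)$, since $T^{-1}$ is affine, and by the chain rule
\[\frac{\partial^{|k|}(\varphi-Q)}{\partial x_1^{k_1}\partial x_2^{k_2}}(x)=\frac{1}{(b_1-a_1)^{k_1}(b_2-a_2)^{k_2}}\cdot\frac{\partial^{|k|}(\psi-B_{m,n}^{\psi})}{\partial u_1^{k_1}\partial u_2^{k_2}}\bigl(T^{-1}(x)\bigr),\]
so that the chosen $m,n$ yield $\max_{x\in R,\,|k|\leqslant r}\bigl|\partial^{|k|}(\varphi-Q)/\partial x^k\bigr|<\varepsilon/4$, a fortiori on $B\subset R$.

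\textbf{Vertical shift and conclusion.} Finally I would set $q(x)=Q(x)+\varepsilon/2$, which is still a polynomial. Derivatives of order $|k|\geqslant1$ of $q$ and $Q$ agree, so the previous bound is inherited unchanged and gives $<\varepsilon/4<\varepsilon$ for these terms in \eqref{19}. For $|k|=0$, the inequality $|Q-\varphi|<\varepsilon/4$ rearranges to $\varphi+\varepsilon/4<Q+\varepsilon/2<\varphi+3\varepsilon/4$, which is exactly \eqref{20}, and at the same time yields $|q-\varphi|<3\varepsilon/4<\varepsilon$, closing the zeroth-order term in \eqref{19}. Since $\varphi\geqslant0$, the lower half of \eqref{20} forces $q(x)>\varepsilon/4>0$ on $B$. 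The only delicate point in the whole argument is absorbing the chain-rule factors $(b_i-a_i)^{-k_i}$ into the Bernstein estimate, but this is routine because $r$ is finite, hence so is $M$, and Proposition~\ref{King} provides uniform $C^r$-convergence of the Bernstein polynomials.
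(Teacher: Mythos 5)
Your proof is correct and follows essentially the same route as the paper: reduce to $[0,1]^2$, invoke Proposition~\ref{King} to get a polynomial within $\varepsilon/4$ in the $C^r$ sense, and then shift by the constant $\varepsilon/2$ so that \eqref{20} and the $|k|=0$ case of \eqref{19} both drop out. The only difference is that you spell out the affine change of coordinates and absorb the chain-rule factors into the constant $M$, a detail the paper passes over silently.
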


\begin{proof} Except by a translation and a linear change of coordinates, we can suppose $B\subset[0,1]^2$. It follows from Proposition~\ref{King} that there is a polynomial $\overline{q}\colon\mathbb{R}^2\to\mathbb{R}$ such that,
\begin{equation}\label{21}
	\max_{\substack{x\in[0,1]^2 \\ |k|\leqslant r}}\left|\frac{\partial^{|k|}\varphi}{\partial x_1^{k_1}\partial x_2^{k_2}}(x)-\frac{\partial^{|k|}\overline{q}}{\partial x_1^{k_1}\partial x_2^{k_2}}(x)\right|<\frac{1}{4}\varepsilon.
\end{equation}
 Consider now the polynomial $q\colon\mathbb{R}^2\to\mathbb{R}$ given by $q(x)=\overline{q}(x)+\frac{1}{2}\varepsilon$. We claim that it satisfies \eqref{19} and \eqref{20}. Indeed, since $q$ is a translation of $\overline{q}$ it follows that its partial derivatives of order $|k|$, $|k|\geqslant 1$, are equal. Therefore \eqref{19} follows directly from \eqref{21}, when $|k|\geqslant 1$. We now look the case $|k|=0$, i.e. the function $q$ itself. It follows from \eqref{21}, with $|k|=0$, that
\begin{equation}\label{22}
	\varphi(x)-\frac{1}{4}\varepsilon<\overline{q}(x)<\varphi(x)+\frac{1}{4}\varepsilon.
\end{equation}
Adding $\frac{1}{2}\varepsilon$ on all sides of \eqref{22} we obtain
\begin{equation}\label{24}
	\varphi(x)+\frac{1}{4}\varepsilon<q(x)<\varphi(x)+\frac{3}{4}\varepsilon.
\end{equation}
which is precisely \eqref{20}. Moreover, it also follows from \eqref{24} that
	\[\varphi(x)-\varepsilon<q(x)<\varphi(x)+\varepsilon,\]
and thus we have \eqref{19} with $|k|=0$, completing the proof. \end{proof}

\begin{remark}\label{Remark4}
	Let $\varphi\colon\mathbb{R}^2\to[0,1]$ be a $(\delta_1,\delta_2,c)$-bump, function and set $\overline{\varepsilon}>0$ and $\varepsilon>0$ such that $\varepsilon\leqslant\frac{1}{3}\overline{\varepsilon}$. Let $q_{\overline{\varepsilon}}$ and $q_\varepsilon$ be the respective polynomials given by Proposition~\ref{P1}. It follows from \eqref{24} that
		\[\varphi(x)<q_\varepsilon(x)<\varphi(x)+\frac{3}{4}\varepsilon\leqslant\varphi(x)+\frac{1}{4}\overline{\varepsilon}<q_{\overline{\varepsilon}}(x).\]
	In particular, $\varphi(x)<q_{\overline{\varepsilon}}(x)$ and $q_\varepsilon(x)<q_{\overline{\varepsilon}}(x)$ for all $x\in B$.
\end{remark}

\subsection{Positive or negative invariant regions associated to a simple polycycle}\label{Sec4.2}

Let $X$ be a planar smooth vector field and $\Omega\subset\mathbb{R}^2$ an open set. We say that $\Omega$ is \emph{positive-invariant} (resp. \emph{negative-invariant}) by $X$ if for every $x\in\Omega$ we have $\gamma(t)\in\Omega$ for all $t\geqslant0$ (resp. $t\leqslant0$), where $\gamma(t)$ is the orbit of $X$ with initial condition $\gamma(0)=x$.

Let $S\subset\mathbb{R}^2$ be a continuous simple closed curve. We say that $S$ is \emph{piecewise smooth} if it is of class $C^\infty$ except, perhaps, in at most a finite number of points. We will say that a piecewise smooth closed curve is without contact with a smooth vector field if on each of the closed $C^\infty$ sides of $S,$ the scalar product $\left<X, \nabla S\right>$   keeps sign on all the regular points of $S,$ and on $S$ either $X$  points always towards the interior of the region delimited by $S$ or  $X$  points always towards the exterior of this region. 

The proof of next result follows mutatis mutandis the proof of a similar result, but with an isolated limit cycle instead of a polycycle, see \cite[Proposition $1$]{Santana}. We omit the details. For an illustration of the situation  see Figure~\ref{Fig7}.  As usual, given a compact set $B\subset\mathbb{R}^2$, let $\textnormal{Int}(B)$ be its topological interior.

\begin{proposition}\label{P3}
	Let $\mathcal{X}$ be one of the topological spaces $\mathfrak{X}^\infty$ or $\mathcal{P}^r$, for some $r\geqslant1$. Let $X\in\mathcal{X}$ having a simple polycycle $\Gamma^n$ composed by $n\geqslant1$ hyperbolic saddles and let $B\subset\mathbb{R}^2$ be a compact set such that $\Gamma^n\subset\operatorname{Int}(B).$ Then there is a continuous and piecewise smooth simple closed curve $S\subset\operatorname{Int}(B)$, on the same connected component of $B\backslash\Gamma^n$ as the first-return map of $\Gamma^n$, such that if $\Omega\subset\operatorname{Int}(B)$ is the open region bounded by $S$ and $\Gamma^n$, then following statements hold.
	\begin{enumerate}[label=(\alph*)]
		\item There is no singularity of $X$ in $\Omega$.
		\item There is no periodic orbit of $X$ in $\Omega$.
		\item $X$ is without contact with $S$.
		\item If $r(\Gamma^n)>1$, then $\Omega$ is positive invariant by $X$.
		\item If $r(\Gamma^n)<1$, then $\Omega$ is negative invariant by $X$.
	\end{enumerate}
\end{proposition}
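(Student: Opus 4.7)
The plan is to adapt, mutatis mutandis, the construction given in \cite[Proposition~1]{Santana} for an isolated limit cycle: I will produce a narrow one-sided trapping region for $\Gamma^n$ whose outer boundary is a piecewise smooth transverse curve $S$ and whose inner boundary is $\Gamma^n$ itself. Replacing $X$ by $-X$ if necessary (which swaps the stable and unstable cases without altering the orbit structure of $\Gamma^n$), I may assume $r(\Gamma^n)>1$. By Cherkas' theorem recalled in the introduction, the first-return map $\Pi$ of $X$ on any transverse section $l_1$ placed at a regular point of $\Gamma^n$ on the return-map side is a strict contraction toward $\Gamma^n\cap l_1$; standard arguments then produce a narrow one-sided tubular neighborhood $W\subset\operatorname{Int}(B)$ of $\Gamma^n$ whose closure contains no singularity of $X$ off $\Gamma^n$ (by compactness and isolation of singularities) and no periodic orbit (any such orbit would meet $l_1$ at a non-trivial fixed point of $\Pi$). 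Any $\Omega\subset W$ therefore satisfies (a) and (b) automatically.

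Next I construct $S\subset W$ by concatenating locally transverse arcs. Along each regular side $L_i$, away from the saddles, flow-box coordinates give a short smooth arc $\beta_i$ transverse to $X$, oriented so that $X$ points across $\beta_i$ toward $\Gamma^n$. Near each hyperbolic saddle $p_i$ I work in $C^\infty$ coordinates on a small neighborhood $U_i$ that straighten the local stable and unstable branches of $\Gamma^n$ onto the coordinate axes, and inside the hyperbolic sector through which $\Gamma^n$ passes I choose a smooth decreasing graph $y=\psi_i(x)$ whose slope $\psi_i'(x)$ is uniformly more negative than the orbital slope of $X$ (which equals $-(r_i+o(1))\,y/x$ near the saddle); a direct sign computation on $\langle X,\nabla(y-\psi_i(x))\rangle$ then shows that the resulting arc $\alpha_i$ is everywhere transverse to $X$. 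Matching the endpoints of the $\beta_i$ and the $\alpha_i$ around all $n$ sides by an appropriate choice of transverse sections and of the parameter measuring the distance to $\Gamma^n$, I obtain an open piecewise smooth path from a point $q_0\in l_1$ close to $\Gamma^n$ to $\Pi(q_0)\in l_1$; the closing segment of $l_1$ between $\Pi(q_0)$ and $q_0$ is itself transverse to $X$ and completes $S$.

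To verify the remaining properties, I observe that $\langle X,\nabla S\rangle$ has constant sign on each $\beta_i$, on each $\alpha_i$ and on the closing segment of $l_1$ by the local computations above, and the signs agree globally because the strict contraction $\Pi(s)<s$ for small $s>0$ forces $X$ to point from $q_0$ toward $\Pi(q_0)$ on $l_1$, which is the same ``inward'' orientation imposed on each $\beta_i$ and $\alpha_i$. Hence $X$ is without contact with $S$ and points consistently into the open region $\Omega$ bounded by $S$ and $\Gamma^n$, giving (c) and (d). Property (e) follows by applying the same argument to $-X$.

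The main technical obstacle I anticipate is not any single difficult estimate but the bookkeeping required to glue the arcs $\beta_i$ and $\alpha_i$ consistently around $\Gamma^n$: every arc must lie strictly on the return-map side of $\Gamma^n$, inside $\operatorname{Int}(B)$, bounded away from all singularities of $X$ outside $\Gamma^n$, and oriented so that $\langle X,\nabla S\rangle$ keeps the same sign on every smooth piece. This is precisely the bookkeeping already carried out for the limit-cycle analogue in \cite[Proposition~1]{Santana}, and the polycycle case is obtained from it by iterating the local construction along all $n$ sides of $\Gamma^n$.
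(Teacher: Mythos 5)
Your strategy coincides with the paper's: the paper proves Proposition~\ref{P3} precisely by asserting that the construction of \cite[Proposition~1]{Santana} for an isolated limit cycle carries over mutatis mutandis, and omits all details, so your plan of gluing transverse arcs in flow boxes along the sides with transverse arcs through the hyperbolic sectors, closed up by a segment of $l_1$, is exactly the intended argument. There is, however, one concrete step that fails as written: the sign of the slope comparison at the saddles. In the straightening coordinates where the local stable branch of $\Gamma^n$ is the positive $y$-axis, the unstable branch is the positive $x$-axis, and the sector carrying the return map is the open first quadrant, one has $X\approx(\lambda^u x,\lambda^s y)$ with $\lambda^s<0<\lambda^u$ and $r_i=-\lambda^s/\lambda^u$. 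If $\psi_i'(x)<\lambda^s y/(\lambda^u x)=-r_i y/x$, i.e.\ the slope of $\alpha_i$ is \emph{more} negative than the orbital slope as you require, then
\[
\langle X,\nabla\bigl(y-\psi_i(x)\bigr)\rangle=\lambda^s y-\psi_i'(x)\,\lambda^u x>\lambda^s y+r_i\lambda^u y=0,
\]
so orbits cross $\alpha_i$ in the direction of increasing $y-\psi_i(x)$, that is, \emph{away} from the corner of $\Gamma^n$ and out of $\Omega$, while your arcs $\beta_i$ are crossed toward $\Gamma^n$. With that choice the flow does not point to one and the same side all along $S$, so $S$ is not without contact in the sense of Section~\ref{Sec4.2} and both (c) and (d) fail. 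The correct requirement is the opposite one, $-r_i y/x<\psi_i'(x)<0$ uniformly (the arc must be \emph{shallower} than the orbits); such arcs exist, for instance level curves of $xy^{1/r'}$ with $0<r'<r_i$, and they leave enough freedom in $r'$ to match endpoints with the flow-box arcs.

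With this correction the remainder of your argument is sound and matches the omitted proof: Cherkas gives $\Pi(s)<s$ for small $s>0$, which both supplies the closing segment of $l_1$ with the consistent crossing direction and excludes periodic orbits winding around $\Gamma^n$. Only note that a periodic orbit in $\Omega$ which is null-homotopic in the annulus $\Omega$ does not meet $l_1$; it is excluded instead because it would bound a disk contained in $\Omega$ and hence enclose a singularity, contradicting item (a).
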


\begin{remark}
	Under the statement of Proposition~\ref{P3}, it follows from the compactness of $S$ and the continuity of the inner product $\left<\cdot,\cdot\right>$ that there is a neighborhood $N\subset\mathcal{X}$ of $X$ such that $\left<X(s),Y(s)\right>>0$ for every $Y\in N$ and $s\in S$. In particular, $Y$ is also without contact with $S$ and points in the same direction as $X$.
\end{remark}

\begin{figure}[h]
	\begin{center}
		\begin{minipage}{7cm}
			\begin{center} 
				\begin{overpic}[height=4cm]{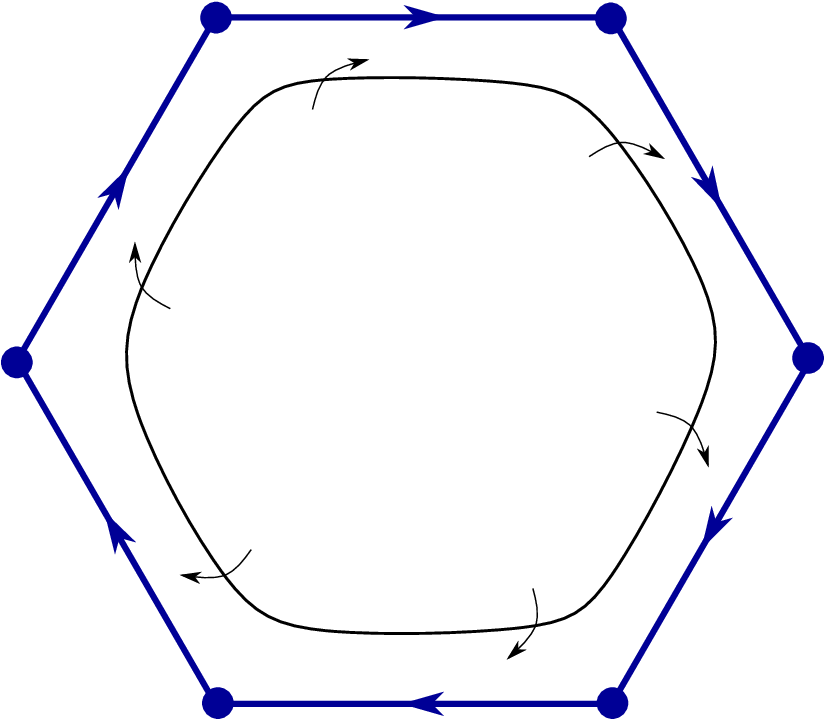} 
					\put(22,30){$S$}	
					\put(85,75){$\Gamma^n$}					
				\end{overpic}
				
				$S$ in the bounded region of $\Gamma^n$.
			\end{center}
		\end{minipage}
		\begin{minipage}{7cm}
			\begin{center} 
				\begin{overpic}[height=4cm]{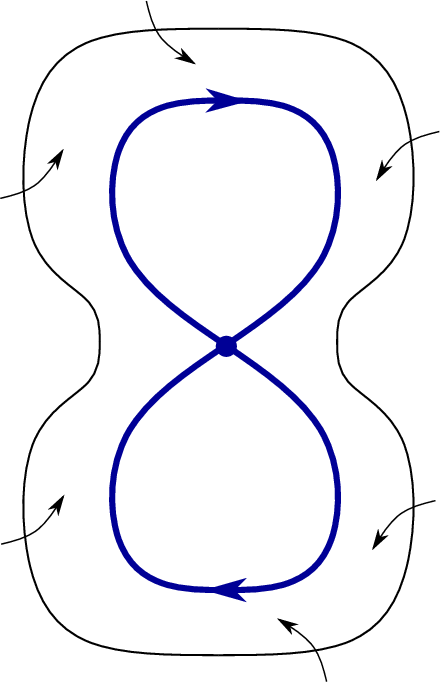}
					\put(59,90){$S$}
					\put(20,25){$\Gamma^n$}
				\end{overpic}
				
				$S$ in the unbounded region of $\Gamma^n$.
			\end{center}
		\end{minipage}
	\end{center}
	\caption{Illustration of of the curve $S$ and the flow of $X$ on it, for the case $r(\Gamma^n)>1$.}\label{Fig7}
\end{figure} 

\subsection{Periodic orbits of smooth vector fields}\label{Sec4.3}

Let $X$ be a planar smooth vector field with a periodic orbit $\gamma(t)$ (not necessarily isolated), with period $T>0$. It follows from Andronov et al \cite[Lemma $1$, p. $124$]{And1971} that there is a neighborhood $A\subset\mathbb{R}^2$ of $\gamma$ and a smooth function $\Phi\colon A\to\mathbb{R}$ such that
\begin{equation}\label{42}
	\Phi(\gamma(t))=0, \quad \left(\frac{\partial\Phi}{\partial x}(\gamma(t))\right)^2+\left(\frac{\partial\Phi}{\partial y}(\gamma(t))\right)^2>0,
\end{equation}
for every $t\in[0,T]$. In particular, by means of \emph{bump-functions} we can suppose that $\Phi$ is defined on the entire plane and has compact support. The authors in \cite[Theorem $19$]{And1971} use $\Phi$ to perturb the stability of non-hyperbolic limit cycles, bifurcating new ones in the process. In the next result we enunciate and proof a simple version of their results, sufficient for our objectives in this paper.

\begin{proposition}\label{P4}
	Let $X=(P,Q)$, $\gamma$ and $\Phi$ be as above and consider the one-parameter family of planar smooth vector fields $X_\lambda=(P_\lambda,Q_\lambda)$ given by,
	\begin{equation}\label{44}
		P_\lambda(x,y)=P(x,y)+\lambda\Phi(x,y)\frac{\partial\Phi}{\partial x}(x,y), \quad Q_\lambda(x,y)=Q(x,y)+\lambda\Phi(x,y)\frac{\partial\Phi}{\partial y}(x,y),
	\end{equation}
	with $\lambda\in\mathbb{R}$. Then if $\gamma$ is not a hyperbolic limit cycle for $X$ then it is a hyperbolic limit cycle for every $\lambda\neq0$ and its stability depends on the sign of $\lambda$. Otherwise, it is hyperbolic for $|\lambda|$ small enough.
\end{proposition}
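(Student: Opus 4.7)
The key observation is that by the first condition in \eqref{42}, $\Phi$ vanishes identically on $\gamma$, so the perturbative term $\lambda\Phi\,\nabla\Phi$ is zero along $\gamma$. Consequently $X_\lambda(\gamma(t)) = X(\gamma(t))$ for every $t$, which means $\gamma$ is literally still a periodic orbit of $X_\lambda$ with the same period $T$ and the same time parametrization, regardless of the value of $\lambda$. In particular there is no need to ``track'' a nearby perturbed orbit; the orbit itself is preserved.

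The plan is then to compute the characteristic exponent of $\gamma$ as a periodic orbit of $X_\lambda$, namely $\kappa(\lambda) = \int_0^T \operatorname{div}(X_\lambda)(\gamma(t))\,dt$. A direct calculation using the product rule gives
\[
\operatorname{div}(X_\lambda) = \operatorname{div}(X) + \lambda\Bigl[\bigl(\tfrac{\partial\Phi}{\partial x}\bigr)^2 + \bigl(\tfrac{\partial\Phi}{\partial y}\bigr)^2\Bigr] + \lambda\Phi\bigl(\tfrac{\partial^2\Phi}{\partial x^2} + \tfrac{\partial^2\Phi}{\partial y^2}\bigr).
\]
Evaluating at $\gamma(t)$ and using $\Phi(\gamma(t))\equiv 0$ kills the last term, so
\[
\operatorname{div}(X_\lambda)(\gamma(t)) = \operatorname{div}(X)(\gamma(t)) + \lambda\,\|\nabla\Phi(\gamma(t))\|^2.
\]
Integrating over $[0,T]$ yields $\kappa(\lambda) = \kappa(0) + \lambda B$, where $B = \int_0^T \|\nabla\Phi(\gamma(t))\|^2\,dt$. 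The crucial point is that $B>0$, which is precisely what the second inequality in \eqref{42} guarantees (the integrand is continuous and strictly positive on the compact orbit), so $\kappa$ is a strictly increasing affine function of $\lambda$.

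The conclusion then reduces to invoking the classical criterion (see e.g.\ \cite[Ch.~VIII]{And1971}): a periodic orbit with nonzero characteristic exponent is a hyperbolic limit cycle, stable when $\kappa<0$ and unstable when $\kappa>0$. If $\gamma$ is not hyperbolic for $X$ then $\kappa(0)=0$, so $\kappa(\lambda) = \lambda B$ is nonzero for every $\lambda\ne 0$ and changes sign with $\lambda$, giving both the hyperbolicity and the claimed stability dichotomy. If on the other hand $\gamma$ is already hyperbolic for $X$, then $\kappa(0)\ne 0$ and by continuity of $\lambda\mapsto\kappa(\lambda)$ the same sign persists for $|\lambda|$ small enough.

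I do not expect any serious obstacle: the only delicate points are realizing that $\gamma$ itself (not merely some nearby orbit) is a periodic orbit of $X_\lambda$, and verifying that the perturbation contributes exactly the positive quantity $\lambda\|\nabla\Phi\|^2$ to the divergence along $\gamma$. Both rely directly on the two properties in \eqref{42}, and once these are in hand the rest is a standard application of the characteristic-exponent criterion.
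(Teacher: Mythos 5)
Your proof is correct and follows essentially the same route as the paper's: both note that $\Phi\equiv 0$ on $\gamma$ keeps $\gamma$ as a periodic orbit of $X_\lambda$, expand the divergence integral so the $\lambda\Phi\Delta\Phi$ term drops out, and conclude from the strict positivity of $\int_0^T\|\nabla\Phi(\gamma(t))\|^2\,dt$. The only cosmetic difference is that you phrase the criterion via the characteristic exponent while the paper phrases it via the derivative of the Poincar\'e return map (citing \cite{DumLliArt2006}); these are the same quantity.
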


\begin{proof} It follows from \eqref{42} that $X_\lambda(\gamma(t))=X(\gamma(t))$ for every $t\in[0,T]$ and $\lambda\in\mathbb{R}$. Hence, $\gamma(t)$ is also a periodic orbit of $X_\lambda$. The first derivative of the Poincar\'e first return map of $X_\lambda$ at $\gamma$ is given by
	\[r(\lambda):=\int_{0}^{T}\left(\frac{\partial P_\lambda}{\partial x}+\frac{\partial Q_\lambda}{\partial y}\right)(\gamma(t))\;dt,\]
see for example \cite[Theorem $1.23$]{DumLliArt2006}. It follows from the expression \eqref{44} of $X_\lambda$ that
	\[r(\lambda)=\int_{0}^{T}\frac{\partial P}{\partial x}+\frac{\partial Q}{\partial y}\;dt+\lambda\int_{0}^{T}\Phi\frac{\partial^2\Phi}{\partial x^2}+\Phi\frac{\partial^2\Phi}{\partial y^2}\;dt+\lambda\int_{0}^{T}\left(\frac{\partial\Phi}{\partial x}\right)^2+\left(\frac{\partial\Phi}{\partial y}\right)^2\;dt,\]
where term $\gamma(t)$ was omitted by simplicity. Observe that the middle integral of the right-hand side is equal to zero because $\Phi(\gamma)\equiv0.$  Thus
	\[r(\lambda)=r(0)+\lambda\int_{0}^{T}\left(\frac{\partial\Phi}{\partial x}(\gamma(t))\right)^2+\left(\frac{\partial\Phi}{\partial y}(\gamma(t))\right)^2\;dt.\]
From \eqref{42} we have that the above integral is positive and thus if $r(0)=0$ (i.e. $\gamma$ is not hyperbolic for $X$) then $r(\lambda)\neq0$ for every $\lambda\neq0$. In particular, $\text{sign}(r(\lambda))=\text{sign}(\lambda)$ and thus we can choose the stability of $\gamma$.  If $\gamma$ is a hyperbolic limit cycle for $X$ then $r(0)\neq0$ and thus $\gamma$ remains a hyperbolic limit cycle of same stability for $|\lambda|$ small enough. \end{proof} 

\section{Proof of Theorem \ref{Main1}}\label{Sec5}

For simplicity, we assume for now that $\Gamma^n$ is endowed with the trivial permutation $\tau$. We recall that $R_i=\prod_{j=1}^{i}r_j$, where $r_i$ are the hyperbolicity ratios \eqref{3} of the hyperbolic saddles of the polycycle $\Gamma^n$. Let $L_i$ and $x_i$ be as in Section~\ref{Sec2.2}. For each $i\in\{1,\dots,n\}$, let $\gamma_i(t)$ be the parametrization of $L_i$ given by the solution of $X$ and with the initial condition $\gamma_i(0)=x_i$. Let also $L_i^+=\{\gamma_i(t)\colon t>0\}$. Let $B\subset\mathbb{R}^2$ be a closed ball such that $\Gamma^n\subset\operatorname{Int}(B)$. For each $i\in\{1,\dots,n\}$, let $c_i\in L_i^+$ and let $\delta_{i,2}>\delta_{i,1}>0$ be small enough such that the compact sets
	\[G_{i,j}=\{(x_1,x_2)\in\mathbb{R}^2\colon ||x-c_i||\leqslant\delta_{i,j}\},\]
satisfies the following statements.	
\begin{enumerate}[label=(\alph*)]
	\item $\Gamma^n\cap G_{i,j}=L_i^+\cap G_{i,j}\neq\emptyset$, $j\in\{1,2\}$;
	\item If $i\neq k$, then $G_{i,2}\cap G_{k,2}=\emptyset$;
	\item $G_{i,j}\subset\operatorname{Int}(B)$.
\end{enumerate}
See Figure~\ref{Fig3}.
\begin{figure}[ht]
	\begin{center}
		\begin{minipage}{8.5cm}
			\begin{center} 
				\begin{overpic}[height=6cm]{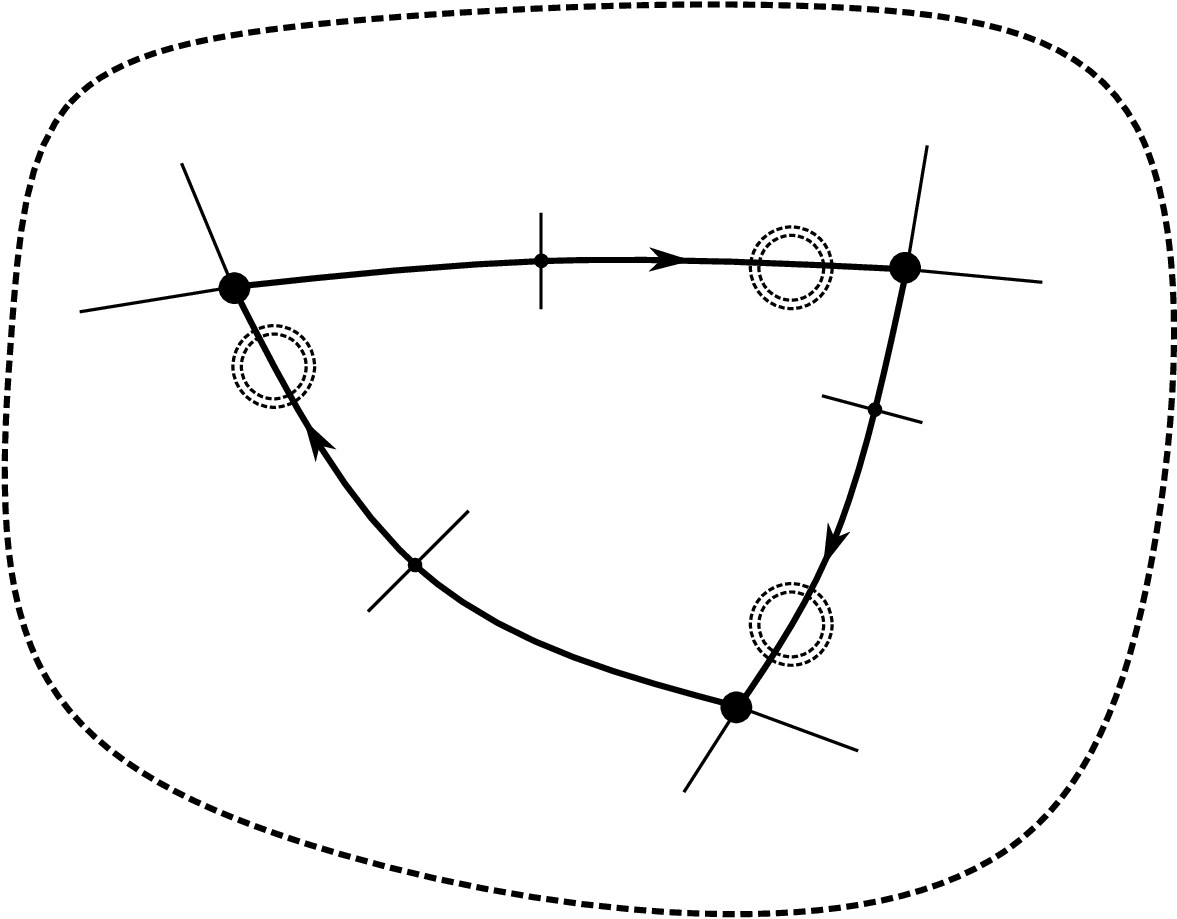} 
					\put(79,57){$p_1$}
					\put(12,55){$p_2$}
					\put(62,13){$p_3$}	
					\put(95,72){$B$}
					\put(60,48){$G_{1,j}$}
					\put(28,45){$G_{2,j}$}
					\put(72,22){$G_{3,j}$}
					\put(38,57){$L_1$}
					\put(38,20){$L_2$}
					\put(74,35){$L_3$}			
				\end{overpic}
			\end{center}
		\end{minipage}
		\begin{minipage}{8.5cm}
			\begin{center} 
				\begin{overpic}[height=6cm]{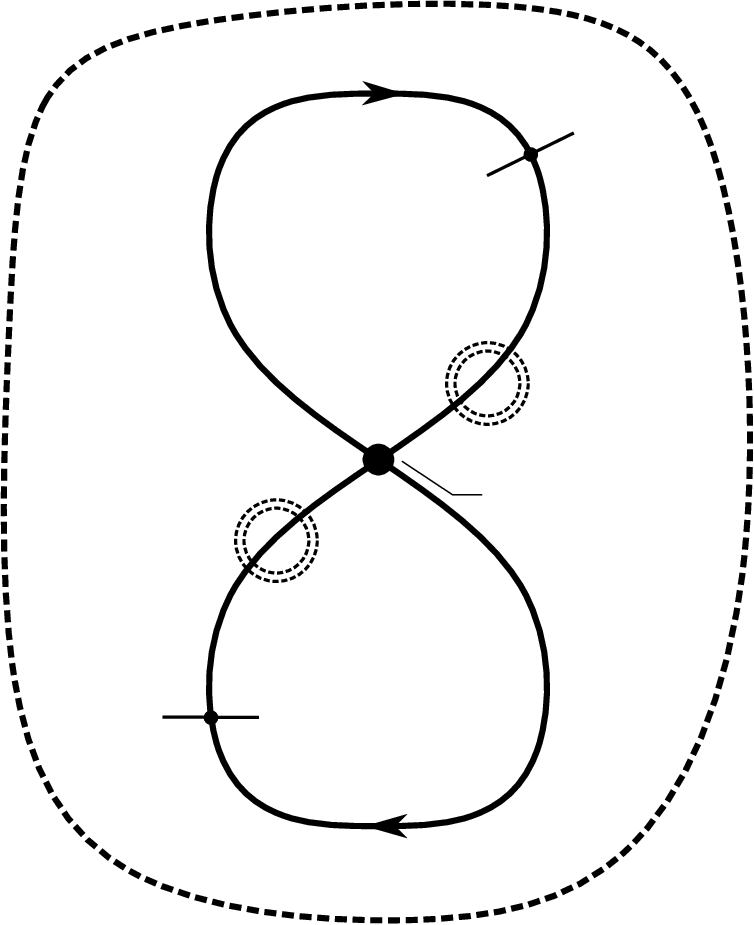} 
					\put(54,45){$p_1=p_2$}
					\put(26,80){$L_1$}
					\put(49,18){$L_2$}
					\put(72,95){$B$}
					\put(59,56){$G_{1,j}$}
					\put(13,40){$G_{2,j}$}	
				\end{overpic}
			\end{center}
		\end{minipage}
	\end{center}
	\caption{An illustration of the sets $G_{i,j}$.}\label{Fig3}
\end{figure} 
Let $\varphi_i\colon\mathbb{R}^2\to\mathbb{R}$ be a $(\delta_{i,1},\delta_{i,2},c_i)$-bump function. Given $\varepsilon>0$, let $q_{i,\varepsilon}\colon\mathbb{R}^2\to\mathbb{R}$ be the polynomial approximation of $\varphi_i$, on $B$, given by Proposition~\ref{P1}. Write $X=(P,Q)$ and let $X^\perp=(-Q,P)$. Let $K\colon\mathbb{R}^2\times\mathbb{R}^n\times(0,+\infty)\to\mathbb{R}^2$ be given by
\begin{equation}\label{9}
	K(x,\mu,\varepsilon)=\left(\sum_{i=1}^{n}\mu_iq_{i,\varepsilon}(x)\right)X^\perp(x),
\end{equation}
and denote
\begin{equation}\label{10}
	X_{\mu,\varepsilon}(x)=X(x)+K(x,\mu,\varepsilon).
\end{equation}
It follows from Proposition~\ref{P1} that $q_{i,\varepsilon}\to\varphi_i$ in the $C^r$-topology (restricted to $B$) as $\varepsilon\to0$. Hence we also let $K\colon\mathbb{R}^2\times\mathbb{R}^n\times\{0\}\to\mathbb{R}^2$ be given by
\begin{equation}\label{2}
	K(x,\mu,0)=\left(\sum_{i=1}^{n}\mu_i\varphi_i(x)\right)X^\perp(x),
\end{equation}
and denote 
\begin{equation}\label{16}
	X_{\mu,0}(x)=X(x)+K(x,\mu,0).
\end{equation}
Observe $X_{0,\varepsilon}=X$ for every $\varepsilon\geqslant0$ and that \emph{for each fixed} $\varepsilon\geqslant0$ the family $X_{\mu,\varepsilon}$ is a well defined family of $C^\infty$-vector fields containing $X$, relative to the parameter $\mu\in\Lambda$. Moreover it is also polynomial if $X$ is polynomial and $\varepsilon>0$. In other words, $X_{\mu,\varepsilon}$ is rather a one-parameter family of one-parameter families of vector fields $(X_\mu)_\varepsilon$, than a two-parameter family. However for simplicity we write $X_{\mu,\varepsilon}$.

Observe that $X_{\mu,0}\to X$ in $\mathfrak{X}^\infty$ as $\mu\to0$ and that if $X$ is polynomial, then given any neighborhood $N\subset\mathcal{P}^r$ of $X$ we can extend the compact $B\subset\mathbb{R}^2$ if necessary such that $X_{\mu,\varepsilon}\in N$ for every $(\mu,\varepsilon)\approx(0,0)$, $\varepsilon>0$. Let $\Lambda\subset\mathbb{R}^n$ be a small enough neighborhood of the origin and let $\overline{\varepsilon}>0$ be small enough. Observe that for each $\varepsilon\in(0,\overline{\varepsilon}]$ we have $X_{\mu,\varepsilon}\to X_{0,\varepsilon}=X$ in $\mathfrak{X}^\infty$ restricted to $B$ (and in particular in a neighborhood of $\Gamma^n$), as $\mu\to0$. Hence it follows that for each $\varepsilon\in(0,\overline{\varepsilon}]$ the displacement maps $d_{i,\varepsilon}\colon\Lambda\to\mathbb{R}$, $i\in\{1,\dots,n\}$, are well defined and of class $C^\infty$. Moreover from \eqref{5} we get that,
\begin{equation}\label{6}
		\frac{\partial d_{i,\varepsilon}}{\partial \mu_j}(0)=\frac{1}{||X(x_i)||}\int_{-\infty}^{+\infty}e^{-\int_{0}^{t}\operatorname{div} X(\gamma_i(s))\;ds}X(\gamma_i(t))\land\frac{\partial K}{\partial\mu_j}(\gamma_i(t),0,\varepsilon)\;dt.
\end{equation}
In particular, the improper integrals in the right hand-side of \eqref{6} are convergent. Similarly, it follows that for $\varepsilon=0$ the displacement maps $d_{i,0}\colon\Lambda\to\mathbb{R}$, $i\in\{1,\dots,n\}$, are also well defined $C^\infty$-maps and their partial derivatives are given by,
\begin{equation}\label{1}
	\frac{\partial d_{i,0}}{\partial \mu_j}(0)=\frac{1}{||X(x_i)||}\int_{-\infty}^{+\infty}e^{-\int_{0}^{t}\operatorname{div} X(\gamma_i(s))\;ds}X(\gamma_i(t))\land\frac{\partial K}{\partial\mu_j}(\gamma_i(t),0,0)\;dt.
\end{equation}
We claim that
\begin{equation}\label{45}
	\lim\limits_{\varepsilon\to0}\frac{\partial d_{i,\varepsilon}}{\partial\mu_j}(0)=\frac{\partial d_{i,0}}{\partial\mu_j}(0),
\end{equation}
for every $i$, $j\in\{1,\dots,n\}$. Indeed, observe that if $\varepsilon>0$, then it follows from \eqref{9} and \eqref{10} that,
\begin{equation}\label{35}
	X(x)\land\frac{\partial K}{\partial\mu_j}(x,0,\varepsilon)=(P,Q)\land(-q_{j,\varepsilon}Q,q_{j,\varepsilon}P)=q_{j,\varepsilon}(P^2+Q^2).
\end{equation}
Similarly, observe that if $\varepsilon=0$, then it follows from \eqref{2} and \eqref{16} that,
\begin{equation}\label{11}
	X(x)\land\frac{\partial K}{\partial\mu_j}(x,0,0)=\varphi_j(P^2+Q^2).
\end{equation}
For each $\varepsilon\in[0,\overline{\varepsilon}]$ and $i$, $j\in\{1,\dots,n\}$, let
	\[\Phi^{i,j}_\varepsilon(t)=e^{-\int_{0}^{t}\operatorname{div} X(\gamma_i(s))\;ds}X(\gamma_i(t))\land\frac{\partial K}{\partial\mu_j}(\gamma_i(t),0,\varepsilon)\]
be the integrand of the right-hand side of \eqref{6} and \eqref{1}. From Proposition~\ref{P1} we know that $q_{j,\varepsilon}>\varphi_j\geqslant0$ and thus from \eqref{35} and \eqref{11} we have that $\Phi^{i,j}_\varepsilon(t)\geqslant0$ for each $t\in\mathbb{R}$, $\varepsilon\in[0,\overline{\varepsilon}]$ and $i$, $j\in\{1,\dots,n\}$.

From Remark~\ref{Remark4} we have that $\Phi^{i,j}_\varepsilon$, with $\varepsilon\in[0,\frac{1}{3}\overline{\varepsilon}]$, is dominated by $\Phi^{i,j}_{\overline{\varepsilon}}$ (i.e $|\Phi^{i,j}_\varepsilon(t)|\leqslant\Phi^{i,j}_{\overline{\varepsilon}}(t)$, for each $t\in\mathbb{R}$). Moreover since \eqref{6} is well defined for $\varepsilon=\overline{\varepsilon}$, it follows that
	\[\int_{-\infty}^{+\infty}\Phi^{i,j}_{\overline{\varepsilon}}(t)\;dt<\infty,\]
for $i$, $j\in\{1,\dots,n\}$, with the convergence absolute because $\Phi^{i,j}_{\overline{\varepsilon}}(t)\geqslant0$. Hence it follows from the \emph{Weierstrass M-test for uniform convergence of an integral} (see \cite[p. $417$, Proposition~$2$]{Zorich}) that for each $i$, $j\in\{1,\dots,n\}$, the $\varepsilon$-family of improper integrals 
	\[\int_{-\infty}^{+\infty}\Phi^{i,j}_\varepsilon(t)\;dt<\infty,\]
converges absolutely for each $\varepsilon\in[0,\frac{1}{3}\overline{\varepsilon}]$ and uniformly in $[0,\frac{1}{3}\overline{\varepsilon}]$.

Moreover from Proposition~\ref{P1} and Remark~\ref{Remark4} we have that for each closed bounded interval $[a,b]\subset\mathbb{R}$ it holds 
	\[\lim\limits_{\varepsilon\to0}\Phi^{i,j}_\varepsilon(t)=\Phi^{i,j}_0(t),\]
\emph{uniformly} in $t\in[a,b]$ and $\varepsilon\in[0,\frac{1}{3}\overline{\varepsilon}]$, respectively. Thus it follows from \cite[p. $420$, Proposition~$4$]{Zorich} that
	\[\lim\limits_{\varepsilon\to0}\int_{-\infty}^{+\infty}\Phi^{i,j}_\varepsilon(t)\;dt=\int_{-\infty}^{+\infty}\Phi^{i,j}_0(t)\;dt,\]
for every $i$, $j\in\{1,\dots,n\}$. Therefore \eqref{45} holds and the claim is proved.

From definition of the bump-functions $\varphi_j$ we know that if $i\neq j$, then $\varphi_j(\gamma_i(t))\equiv0$. Hence, from \eqref{6} and \eqref{11} we obtain that
\begin{equation}\label{7}
	\frac{\partial d_{i,0}}{\partial\mu_j}(0)=0,
\end{equation}
for every $i$, $j\in\{1,\dots,n\}$, with $i\neq j$. Similarly, if $i=j$, then it follows from \eqref{11} that
\begin{equation}\label{8}
	\frac{\partial d_{i,0}}{\partial\mu_i}(0)>0,
\end{equation}
for every $i\in\{1,\dots,n\}$.

We now deal with the bifurcation of the limit cycles. The proof will be by induction on $n$. First, observe that if $n=1,$ $\mu=\mu_1$ and $\Delta(\Gamma^1,\tau)=1$ (i.e. if $R_1=r_1\neq1$), then by using for instance Andronov et al \cite[$\mathsection29$]{And1971}  we get that $X_{\mu,\varepsilon}$ has a limit cycle near $\Gamma^1$  if, and only if, $(r_1-1)\mu\lesssim 0$. Therefore, from now on assume $n\geqslant2$. Let $\mu=(\mu_1,\dots,\mu_n)$ and suppose that the theorem holds for $n-1$. Assume for now that $\Delta(\Gamma^n,\tau)=n$. That is, assume that $(R_i-1)(R_{i-1}-1)<0$ for every $i\in\{2,\dots,n\}$ and that $R_1\neq1$. For definiteness, assume also that $R_n>1$ and $R_{n-1}<1$. In special, observe that $r_n>1$. Since $R_n>1$, from Cherkas \cite{Cherkas} we know that $\Gamma^n$ is stable. Moreover, for each $\varepsilon\in[0,\overline{\varepsilon}]$, it follows from Proposition~\ref{P2} and Corollary~\ref{Coro1} that $d_{n-1,\varepsilon}^{(1)}\colon\Lambda\to\mathbb{R}$ is a well defined function of class $C^1$ such that,
\begin{equation}\label{43}
	\frac{\partial d_{n-1,\varepsilon}^{(1)}}{\partial\mu_j}(0)=\frac{\partial d_{n-1,\varepsilon}}{\partial\mu_j}(0),
\end{equation}
for every $j\in\{1,\dots,n\}$. For each $\varepsilon\in[0,\overline\varepsilon]$, let $F_{\varepsilon}\colon\Lambda\to\mathbb{R}^{n-1}$ be given by
	\[F_{\varepsilon}(\mu)=\left(d_{1,\varepsilon}(\mu),\dots,d_{n-2,\varepsilon}(\mu),d_{n-1,\varepsilon}^{(1)}(\mu)\right),\]
and consider its $(n-1)\times n$ Jacobian matrix at $\mu=0$,
	\[DF_\varepsilon(0)=\left(\begin{array}{ccccc} 
		\displaystyle \frac{\partial d_{1,\varepsilon}}{\partial\mu_1}(0) & \displaystyle \frac{\partial d_{1,\varepsilon}}{\partial\mu_2}(0) & \dots & \displaystyle \frac{\partial d_{1,\varepsilon}}{\partial\mu_{n-1}}(0) & \displaystyle \frac{\partial d_{1,\varepsilon}}{\partial\mu_n}(0) \vspace{0.2cm} \\
		\displaystyle \frac{\partial d_{2,\varepsilon}}{\partial\mu_1}(0) & \displaystyle \frac{\partial d_{2,\varepsilon}}{\partial\mu_2}(0) & \dots & \displaystyle \frac{\partial d_{2,\varepsilon}}{\partial\mu_{n-1}}(0) & \displaystyle \frac{\partial d_{2,\varepsilon}}{\partial\mu_n}(0) \vspace{0.2cm} \\
		\vdots & \vdots & \ddots & \vdots & \vdots \vspace{0.2cm} \\
		\displaystyle\frac{\partial d_{n-1,\varepsilon}^{(1)}}{\partial\mu_1}(0) & \displaystyle \frac{\partial d_{n-1,\varepsilon}^{(1)}}{\partial\mu_2}(0) & \dots & \displaystyle \frac{\partial d_{n-1,\varepsilon}^{(1)}}{\partial\mu_{n-1}}(0) & \displaystyle \frac{\partial d_{n-1,\varepsilon}^{(1)}}{\partial\mu_n}(0)
	\end{array}\right).\]
Let $A_{\varepsilon}$ be the $(n-1)\times(n-1)$ submatrix of $DF_{\varepsilon}(0)$ given by its first $n-1$ columns. It follows from \eqref{7}, \eqref{8} and \eqref{43} that $\det A_0>0$. Hence, by using \eqref{45} and from the continuity of the determinant we know that $\det A_{\varepsilon}>0$ for $\varepsilon\geqslant0$ small enough. Therefore, if we fix $\varepsilon_0\geqslant0$ small enough, we get from the Implicit Function Theorem that there are unique $C^1$ functions $\mu_i^*=\mu_i^*(\mu_n)$, $i\in\{1,\dots,n-1\}$, with $\mu_i^*(0)=0$ and such that
\begin{equation}\label{48}
	F_{\varepsilon_0}(\mu_1^*(\mu_n),\dots,\mu_{n-1}^*(\mu_n),\mu_n)=0,
\end{equation}
for $|\mu_n|$ small. Moreover, it follows from \eqref{45} and \eqref{8} that,
	\[\frac{\partial d_{n,\varepsilon_0}}{\partial \mu_n}(0)>0.\]
Hence, $d_{n,\varepsilon_0}(\mu)\neq0$ if $\mu_n\neq0$. Therefore, from \eqref{48} we know that for $|\mu_n|\neq0$ small enough  and $\mu_i=\mu_i^*(\mu_n)$,  $X_{\mu,\varepsilon_0}$ has a polycycle $\Gamma^{n-1}=\Gamma^{n-1}(\mu_n)$ formed by $n-1$ hyperbolic saddles $p_1(\mu_n),\dots,p_{n-1}(\mu_n)$, and $n-1$ heteroclinic connections $L_i^*=L_i^*(\mu_n)$. It follows from the Implicit Function Theorem that $p_i(\mu_n)\to p_i$ as $\mu_n\to0$. In addition, from the continuous dependence with respect to initial conditions~\cite[Theorem $8$]{And1971} and the local Center-Stable Manifold Theorem~\cite[Theorem~$1$]{Kelley} we get that the closure $\overline{L_i}$ of each regular orbit $L_i$ of $\Gamma^{n-1}(\mu_n)$ (i.e. the regular orbit together with the two singularities given by its $\alpha$ and $\omega$-limits) converges to the closure of the regular orbits of $\Gamma^n$, in relation to the Hausdorff distance, as $\mu_n\to0$. More precisely, for every $\varepsilon>0$ there is $\delta>0$ such that if $|\mu_n|<\delta$, then the following statements hold.
\begin{enumerate}[label=(\alph*)]
	\item $d_H(\Gamma^{n-1}(\mu_n),\Gamma^n)<\varepsilon$.
	\item $d_H(\overline{L_{n-1}^*(\mu_n)},\overline{L_n\cup L_{n-1}})<\varepsilon$.
	\item $d_H(\overline{L_i^*(\mu_n)},\overline{L_i})<\varepsilon$, for each $i\in\{1,\dots,n-2\}$.
\end{enumerate}
see Figures~\ref{Fig6} and \ref{Fig4}.
\begin{figure}[ht]
	\begin{center}
		\begin{minipage}{8.5cm}
			\begin{center} 
				\begin{overpic}[width=8cm]{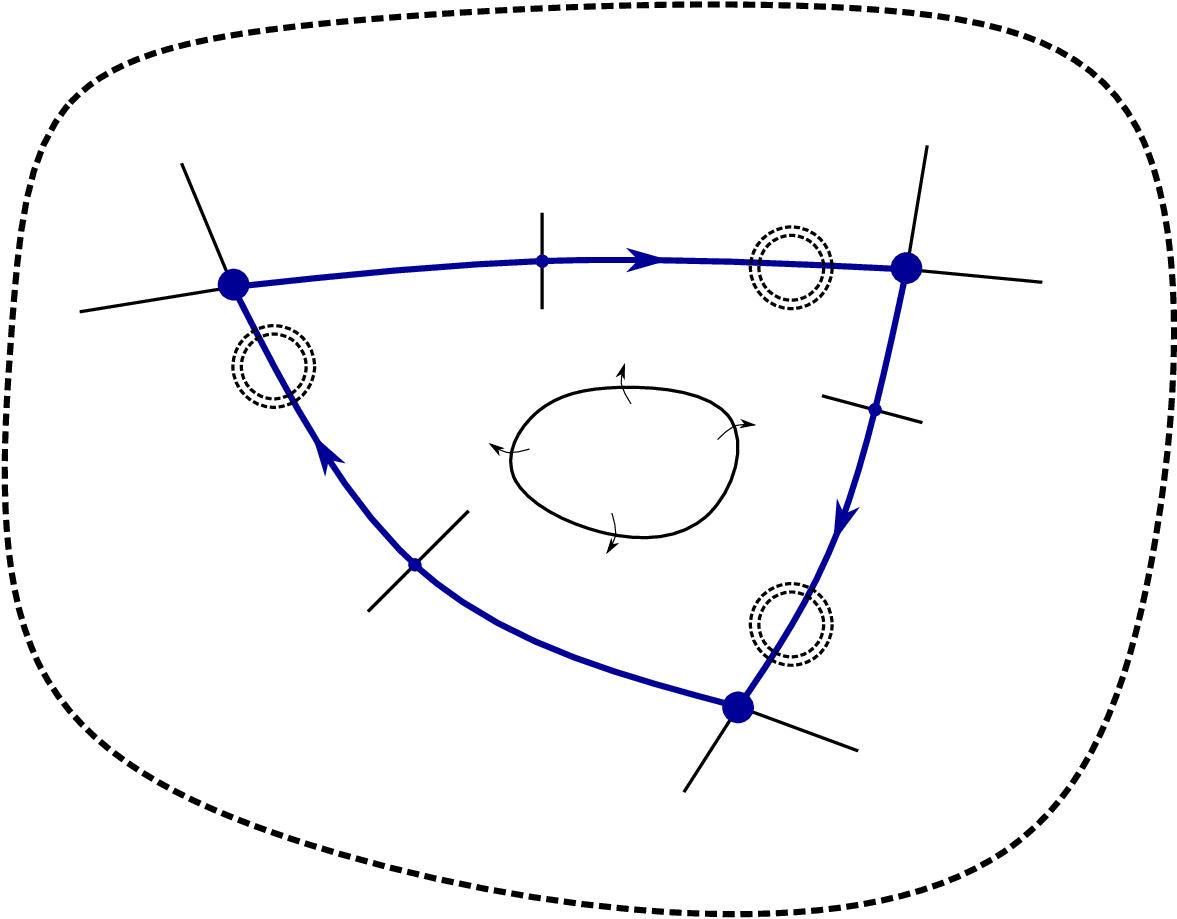} 
					\put(80,57){$p_1$}
					\put(12,55){$p_2$}
					\put(62,13){$p_3$}		
					\put(95,72){$B$}
					\put(56.5,35.5){$S$}		
				\end{overpic}
				
				Before the perturbation.
			\end{center}
		\end{minipage}
		\begin{minipage}{8.5cm}
			\begin{center} 
				\begin{overpic}[width=8cm]{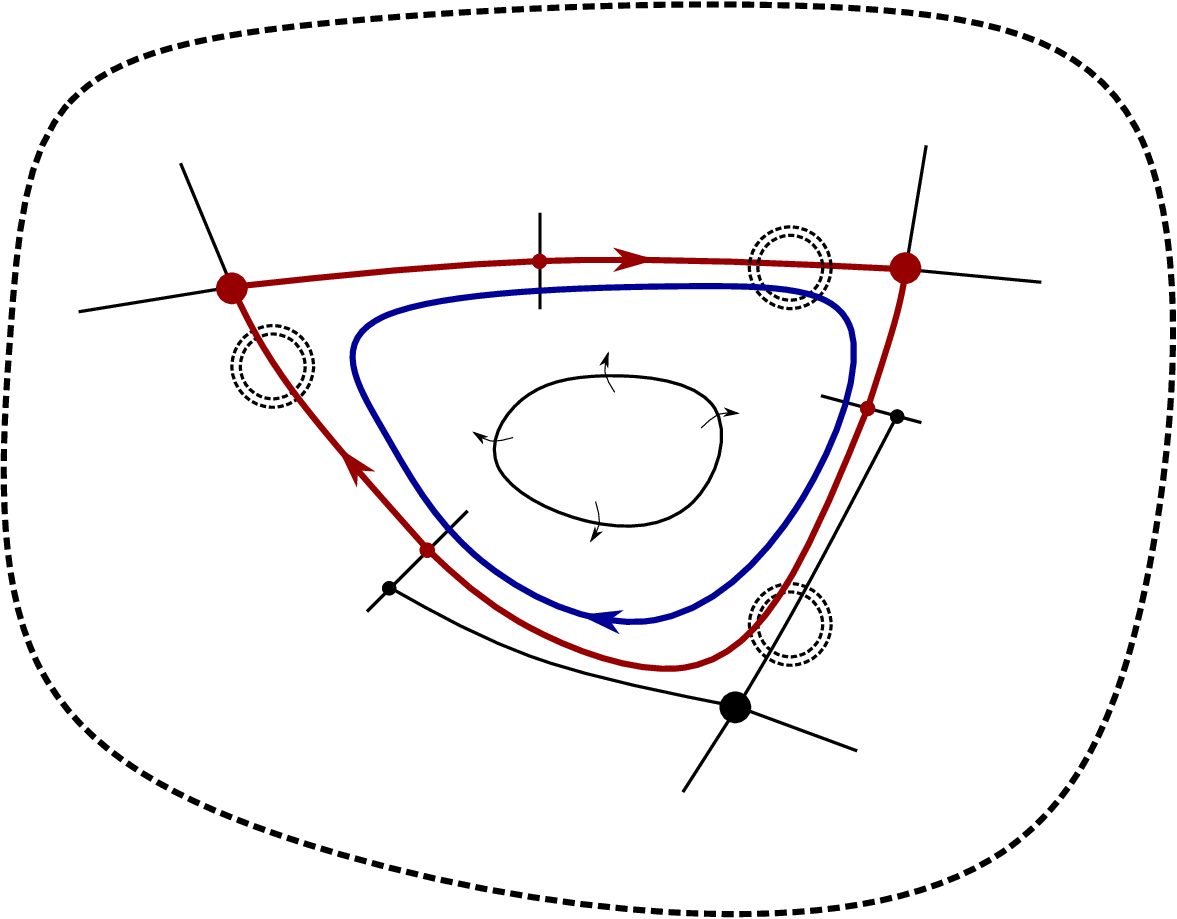} 
					\put(80,57){$p_1$}
					\put(12,55){$p_2$}
					\put(62,13){$p_3$}
					\put(95,72){$B$}
					\put(56,36){$S$}	
				\end{overpic}
				
				After the perturbation.
			\end{center}
		\end{minipage}
	\end{center}

$\;$

	\begin{center}
		\begin{minipage}{8.5cm}
			\begin{center} 
				\begin{overpic}[height=6cm]{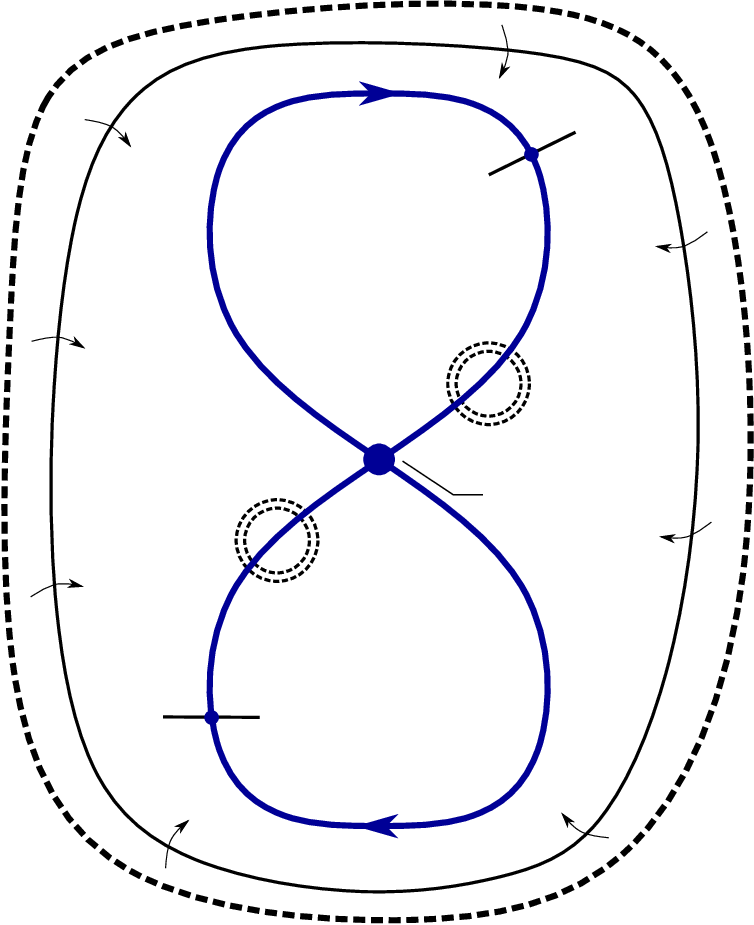} 
					\put(53,45){$p_1=p_2$}
					\put(7,50){$S$}
					\put(72,95){$B$}		
				\end{overpic}
				
				Before the perturbation.
			\end{center}
		\end{minipage}
		\begin{minipage}{8.5cm}
			\begin{center} 
				\begin{overpic}[height=6cm]{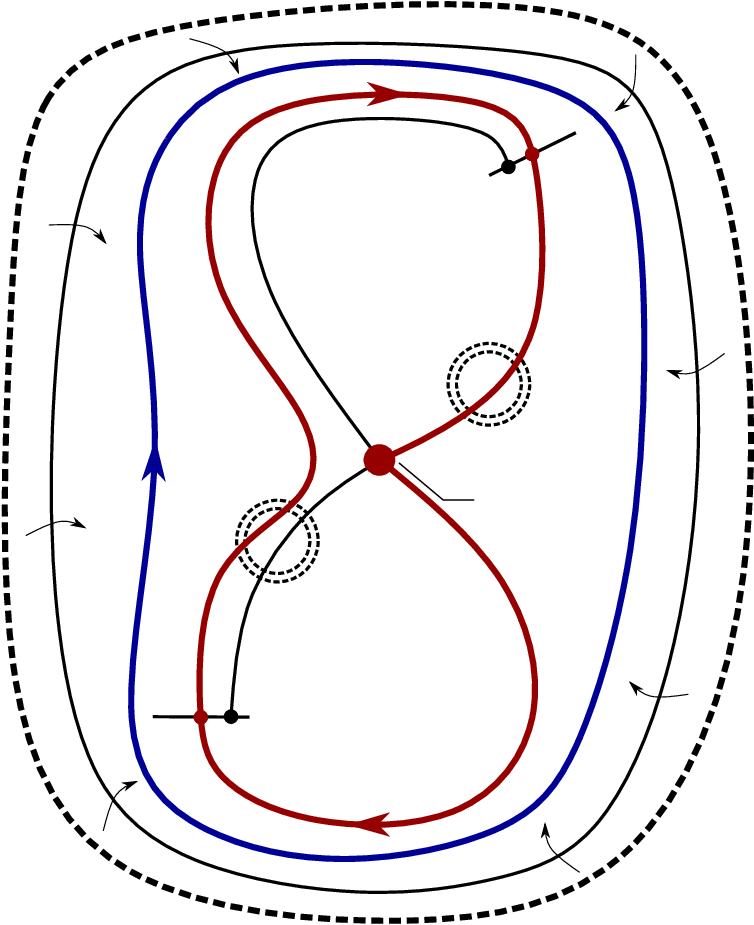} 
					\put(52.5,44.5){$p_1$}
					\put(7,50){$S$}
					\put(72,95){$B$}	
				\end{overpic}
				
				After the perturbation.
			\end{center}
		\end{minipage}
	\end{center}
	\caption{Two llustrations of the bifurcation process. Since $R_n>1$ and $R_{n-1}<1$, it follows that $\Gamma^n$ is stable and $\Gamma^{n-1}$ is unstable.}\label{Fig4}
\end{figure} 
For each $j\in\{1,\dots,n-1\}$, let
	\[R_j^*(\mu_n)=\prod_{i=1}^{j}r_i|_{\mu_i=\mu_i^*(\mu_n),\;i\in\{1,\dots,n-1\}},\]
and observe that $R_{n-1}^*<1$ and $(R_i^*-1)(R_{i-1}^*-1)<0$, $i\in\{2,\dots,n-1\}$, provided $|\mu_n|>0$ is small enough. Hence, $\Gamma^{n-1}$ is unstable if $|\mu_n|\neq0$. Let $S$ be the curve given by Proposition~\ref{P3} and let $\Omega_{\mu_n}$ be the open region bounded by $S$ and $\Gamma^{n-1}(\mu_n)$. Since $\Omega_{\mu_n}$ is positive invariant by the flow of $X_{\mu,\varepsilon_0}$ and $\Gamma^{n-1}$ is unstable, it follows from the Poincar\'e-Bendixson Theorem that there is at least one periodic orbit $C_n(\mu_n)$ in $\Omega_{\mu_n}$ that is not unstable. 

If $X$ is not polynomial then we are outside the analytic framework and thus we might have the bifurcation of infinitely many periodic orbits (see Remark~\ref{Remark6}). In particular, $C_n(\mu_n)$ may not be isolated. In this case we can apply Proposition~\ref{P4}, with the compact support of $\Phi$ small enough such that it does not intersect a neighborhood of $\Gamma^{n-1}$ (and thus does not perturb it), and hence obtain a close enough perturbed vector field that has $C_n(\mu_n)$ as a stable hyperbolic limit cycle. 
	
On the other hand, we may have the bifurcation of at most a finite amount of periodic orbits. In this case every periodic orbit is isolated and thus $C_n(\mu_n)$ is a limit cycle.

If $X$ is polynomial then the perturbation is also polynomial and in particular analytic. This in addition with the fact that $\Gamma^{n-1}$ is unstable (and thus cannot be accumulated by periodic orbits) and the fact that for analytic vector fields all limit cycles are isolated and with finite multiplicity, ensures the bifurcation of at most a finite number of periodic orbits. In particular, $C_n(\mu_n)$ is a limit cycle.

Either in the smooth or polynomial case, we claim that if at most a finite amount of periodic orbits bifurcate, then we can choose $C_n(\mu_n)$ to be stable limit cycle (but not necessarily hyperbolic). Indeed, if $C_n(\mu_n)$ is the unique limit cycle that bifurcates from $\Gamma^n$, then it is clear that it is stable. Suppose therefore that there are the bifurcation of $k$ nested limit cycles $\gamma_1(\mu_n),\dots,\gamma_k(\mu_n)$, with $\gamma_{j-1}$ in the bounded region limited by $\gamma_j$.  Since $\gamma_k$ is the outermost limit cycle, it follows that $\gamma_k$ is stable from outside. Therefore if $\gamma_k$ is not stable, then it is unstable from the inside and thus $\gamma_{k-1}$ is stable from the outside. Similarly, if $\gamma_{k-1}$ is not stable then it is unstable from the inside. Therefore if none of $\gamma_2,\dots,\gamma_k$ are stable, then $\gamma_1$ must be stable from the outside. However since $\gamma_1$ is the innermost limit cycle, it also follows that it is stable from the inside and thus $\gamma_1$ is stable. This proves the claim. 

In particular either in the smooth or polynomial case, observe that $C_n(\mu_n)$ has odd multiplicity and thus its existence persist for small perturbations. 

Hence, if we fix $\mu_n=\mu_n^*$ small enough and let $\mu^*=(\mu_1^*(\mu_n^*),\dots,\mu_{n-1}^*(\mu_n^*),\mu_n^*)$, then it follows by induction that there is an arbitrarily small perturbation of $X_{\mu^*,\varepsilon_0}$ bifurcating at least $n-1$ limit cycles from $\Gamma^{n-1}(\mu_n^*)$. Since $C_n(\mu_n^*)$ persists for small perturbations, we have the bifurcation of at least $n$ limit cycles from $\Gamma^n$. This proves the theorem for the case $\Delta(\Gamma^n,\tau)=n$.

We now study the general case. First observe that to expel $p_n$ it is only necessary to have $r_n\neq1$, regardless of having $r_k=1$ for some other $k\in\{1,\dots,n-1\}$. This can be seen by the definition of the map $F_{\varepsilon}\colon\Lambda\to\mathbb{R}^{n-1}$ given by,
	\[F_{\varepsilon}(\mu)=\left(d_{1,\varepsilon}(\mu),\dots,d_{n-2,\varepsilon}(\mu),d_{n-1,\varepsilon}^{(1)}(\mu)\right).\]
More precisely observe that components $d_{1,\varepsilon},\dots,d_{n-2,\varepsilon}$ are always smooth, while the last component $d_{n-1,\varepsilon}^{(1)}$ is smooth because $r_n\neq1$. 

If $\Delta(\Gamma^n)=n$, then there is a permutation of the indexes $\sigma$ such that $\Delta(\Gamma^n,\sigma)=n$. In particular it is not hard to see that $r_{\sigma(i)}\neq1$ for every $i\in\{1,\dots,n\}$ and thus we can expel the singularities $p_{\sigma(n)},\dots,p_{\sigma(1)}$ one at a each step. Moreover it follows from the definition of $\Delta(\Gamma^n,\sigma)=n$ that at each step the stability of the polycycle reverses and then we have the bifurcation of at least one limit cycle.

If $\Delta(\Gamma^n)=n-1$ then there is a permutation of the indexes $\sigma$ such that $\Delta(\Gamma^n,\sigma)=n-1$. To simplify the notation we shall assume that $\sigma$ is the trivial permutation $\tau$. From $\Delta(\Gamma^n,\tau)=n-1$ we have that there is a unique $i_0\in\{1,\dots,n\}$ such that
	\[(R_{i_0-1}-1)(R_{i_0}-1)\geqslant0, \quad (R_{i-1}-1)(R_i-1)<0, \quad i\in\{1,\dots,i_0-1,i_0+1,\dots,n\},\]
where $R_i=\prod_{j=1}^{i}r_j$, $i\in\{1,\dots,n\}$, and $R_0=R_1^{-1}$. Observe that if $(R_{i-1}-1)(R_i-1)<0$, then $r_i\neq1$.

If $i_0<n$ then we can expel the singularities $p_n,p_{n-1},\dots,p_{i_0+1}$ one at a time and obtain a limit cycle at each step. In particular, we now have $n-{i_0}$ limit cycles and a polycycle with hyperbolic saddles $p_1,\dots,p_{i_0-1},p_{i_0}$ such that
	\[(R_{i_0-1}-1)(R_{i_0}-1)\geqslant0, \quad (R_{i-1}-1)(R_i-1)<0, \quad i\in\{1,\dots,i_0-1\}.\]
If $r_{i_0}\neq1$ then we just expel $p_{i_0}$ (resulting in no limit cycles in this particular step) and thus the following steps are now free to proceed normally. 

If $r_{i_0}=1$ then it plays no role in the alternation of the signs of $R_i-1$ and thus we can take a new indexation given by $p_{i_0}\mapsto p_1$ and $p_i\mapsto p_{i+1}$ for $i\in\{1,\dots,i_0-1\}$. We now have a polycycle such that
	\[(R_0-1)(R_1-1)=0, \quad (R_1-1)(R_2-1)=0, \quad (R_{i-1}-1)(R_i-1)<0, \quad i\in\{3,\dots,i_0\},\]
with the two equations on the left-hand side due to $R_1=r_{i_0}=1$ and $R_0=R_1^{-1}=1$.

Hence we can expel the hyperbolic saddles $p_{i_0},\dots,p_3$, obtaining $i_0-2$ more limit cycles, which adds up to $n-2$ with the previous $n-i_0$ that we had already bifurcated. We now have a polycycle $\Gamma^2$ with two hyperbolic saddles $p_1$ and $p_2$ such that $r_1=r_{i_0}=1$ and $r_2\neq1$ and we must obtain one more limit cycle. To do this, observe that $\Gamma^2$ has a well defined stability because $R_2=r_2\neq1$. Let $S$ be the curve given by Proposition~\ref{P3}. Instead of expelling $p_2$, we now use the displacement maps $d_{1,\varepsilon}$ and $d_{2,\varepsilon}$ to break $\Gamma^2$ in such a way that in addition with the curve $S$, it creates an invariant region $\Omega$ from which the Poincar\'e-Bendixson Theorem ensures the existence of at least one limit cycle. 

The case $\Delta(\Gamma^n)=n-k$ for some $k\in\{1,\dots,n-2\}$ follows similarly. The only difference is that at the end to bifurcate the last limit cycle we may have a polycycle $\Gamma^{k_0+1}$, for some $k_0\in\{1,\dots,k\}$, such that all its hyperbolic saddles $p_1,\dots,p_{k_0},p_{k_0+1}$, except one, have hyperbolicity number $r_j=1$. In particular $\Gamma^{k_0+1}$ has a well defined stability and thus we can apply Proposition~\ref{P3}. At this point we can use the displacement functions $d_{1,\varepsilon},\dots,d_{k_0+1,\varepsilon}$ to break all the heteroclinic connections of $\Gamma^{k_0+1}$ in such a way that we can apply the Poincar\'e-Bendixson Theorem to bifurcate at least one more limit cycle. See Figure~\ref{Fig8}.
\begin{figure}[h]		
	\begin{center}
		\begin{minipage}{7cm}
			\begin{center} 
				\begin{overpic}[width=4cm]{Fig10.eps} 
					\put(78,85){$p_1$}
					\put(14,85){$p_2$}
					\put(-10,43){$p_3$}
					\put(14,0){$p_4$}
					\put(78,0){$p_5$}	
					\put(101,43){$p_6$}
					\put(65,35){$S$}
				\end{overpic}
			
				$\;$
			
				Unperturbed.
								
			\end{center}
		\end{minipage}
		\begin{minipage}{7cm}
			\begin{center} 
				\begin{overpic}[width=4cm]{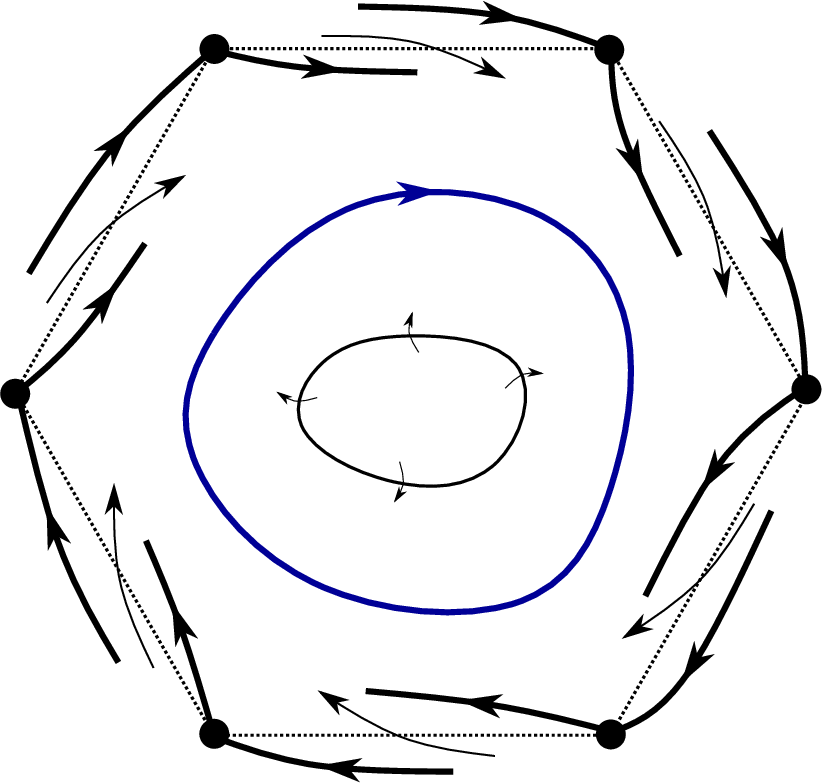} 
					\put(78,90){$p_1$}
					\put(14,90){$p_2$}
					\put(-9,45){$p_3$}
					\put(16,0){$p_4$}
					\put(76,0){$p_5$}	
					\put(101,45){$p_6$}
					\put(62,35){$S$}
				\end{overpic}
			
				$\;$
				
				Perturbed.
				
			\end{center}
		\end{minipage}
	\end{center}
	\caption{Illustration of the bifurcation process with $n=6$, $k_0=5$ and such that there exists a unique $i_0\in\{1,\dots,6\}$ such that $r_{i_0}>1$ and $r_i=1$ for $i\neq i_0$. In particular observe that $\Delta(\Gamma^6)=1$. Blue means stable. Colors available in the online version.}\label{Fig8}
\end{figure} 
	
Finally, observe that if $X$ is polynomial and $\varepsilon_0>0$ is small enough, then $X_{\mu^*,\varepsilon_0}$ is also polynomial. On the other hand, if $X$ is smooth, then its approximations constructed in the proof are smooth as well. {\hfill$\square$}

\begin{remark}\label{Remark6}
	At the proof of Theorem~\ref{Main1} we observe that the case in which $X$ is smooth and infinitely many periodic orbits bifurcate from it is an exceptional case. More precisely, Mourtada~\cite[Theorem~$3$]{Mourtada5} proved that even in the smooth case, generically speaking at most a finite amount of periodic orbits bifurcate from a given hyperbolic polycycle. For more details, see Section~\ref{Sec7}.
\end{remark}

\section{The inverse problem and a concrete example}\label{Sec6}

We start  this section by considering an inverse problem. More concretely  the problem of constructing a polycycle $\Gamma^n$ from a given set $\{r_1,\dots,r_n\}$ of desired hyperbolicity ratios. In particular, we prove that every possibility is realizable by a polynomial vector field of degree at most $n.$

\begin{proposition}\label{Main2}
	Given $n\geqslant 3$, let $r_1,\dots,r_n\in\mathbb{R}$ be positive real numbers. Then there is a planar polynomial vector field $X$ of degree at most $n$ with a polycycle $\Gamma^n$ composed by $n$ distinct hyperbolic saddles $p_1,\dots,p_n$ such that $r_i$ is the hyperbolicity ratio of $p_i$, $i\in\{1,\dots,n\}$.
\end{proposition}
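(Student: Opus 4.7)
The plan is to give an explicit construction. Place the $n$ singularities $p_1,\ldots,p_n$ as the vertices of a convex $n$-gon (e.g.\ the regular $n$-gon inscribed in the unit circle), labeled cyclically. For each $i$ let $\ell_i(x,y)$ be an affine polynomial whose zero set is the line through $p_i$ and $p_{i+1}$ (indices mod $n$), normalized so that $\ell_i>0$ on the interior of the polygon, and let $e_i$ be the unit vector along that line pointing from $p_{i+1}$ to $p_i$. Consider the polynomial vector field
\[
X = \sum_{i=1}^{n} e_i\, c_i(x,y) \prod_{j\neq i}\ell_j(x,y),
\]
where $c_1,\ldots,c_n$ are affine polynomials to be chosen below; note that $\deg X \leqslant 1+(n-1)=n$.

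Three properties must be checked by direct computation. First, each line $\ell_i=0$ is $X$-invariant: on $\ell_i=0$ every term except the $i$-th vanishes, and that term is a multiple of $e_i$, which is tangent to $\ell_i$. Second, each vertex $p_k=\ell_{k-1}\cap\ell_k$ is a singularity, and only the summands $i\in\{k-1,k\}$ contribute to the linearization $DX(p_k)$. Expressed in the basis $\{e_{k-1},e_k\}$, this linearization is diagonal with entries
\[
\lambda_k^u=\kappa_k\,c_{k-1}(p_k)\,(n_k\cdot e_{k-1}), \qquad \lambda_k^s=\kappa_k\,c_k(p_k)\,(n_{k-1}\cdot e_k),
\]
where $\kappa_k=\prod_{j\neq k-1,k}\ell_j(p_k)>0$ and $n_i=\nabla\ell_i$ is the inward normal. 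A short geometric argument shows that $(n_k\cdot e_{k-1})$ and $(n_{k-1}\cdot e_k)$ have opposite signs and the same magnitude $|\sin\theta_k|$, where $\theta_k$ is the interior angle at $p_k$. Hence $p_k$ is a hyperbolic saddle with hyperbolicity ratio
\[
r_k=\frac{|\lambda_k^s|}{\lambda_k^u}=\frac{c_k(p_k)}{c_{k-1}(p_k)},
\]
provided that $c_k(p_k)$ and $c_{k-1}(p_k)$ are positive.

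To realize any prescribed positive ratios $r_1,\ldots,r_n$, choose each $c_i$ to be an affine polynomial satisfying $c_i(p_i)=r_i$ and $c_i(p_{i+1})=1$; this is possible since $p_i\neq p_{i+1}$ by hypothesis. The resulting ratios are then $r_k/1=r_k$, as required. Because $c_i$ is affine and positive at the two endpoints of the segment $L_i=[p_{i+1},p_i]$, it is positive on the whole closed segment. Together with $\prod_{j\neq i}\ell_j>0$ on the relative interior of $L_i$ (from convexity and our sign choice for the $\ell_j$), this makes the flow on $L_i$ point in the direction $+e_i$, so $L_i$ is a heteroclinic connection from $p_{i+1}$ to $p_i$, and the polycycle $\Gamma^n$ is realized.

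The step I expect to be most delicate is checking that no additional singularities of $X$ lie on the open segments $L_i$. Extra zeros of $X$ arise at intersections $\ell_i\cap\ell_j$ for non-adjacent indices $i,j$, but for a convex polygon such intersections lie outside the polygon and hence off the segments $L_i$; the remaining zeros of the flow on $\ell_i$ come from $c_i=0$, which does not meet the open segment because $c_i$ is affine and strictly positive at both endpoints. The sign and orientation bookkeeping needed to confirm the saddle type at each $p_k$ and the correct direction of the connection on each $L_i$ is routine but must be handled consistently; this can be done by fixing once and for all, say, the standard regular $n$-gon traversed clockwise and verifying the signs in that model before arguing by projective/affine invariance in the general case.
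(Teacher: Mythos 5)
Your construction is essentially identical to the paper's: the same regular $n$-gon, the same vector field written as a sum of terms each tangent to one side and multiplied by an affine factor times the product of the remaining lines, the same eigenvalue computation yielding the ratio $c_k(p_k)/c_{k-1}(p_k)$, and the same interpolation choice $c_i(p_i)=r_i$, $c_i(p_{i+1})=1$ together with the positivity-on-the-segment argument ruling out extra singularities. The proposal is correct and takes the same route as the paper.
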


\begin{proof} For each $i\in\{1,\dots,n\}$ let $\xi_i=\cos(2i\pi/n)+i\sin(2i\pi/n)$ be the roots of unity of order $n$. For each $\xi_i\in\mathbb{C}$ we associate the point $p_i\in\mathbb{R}^2$ given by $p_i=(\cos(2i\pi/n),\sin(2i\pi/n))$. It is well known that $\xi_1,\dots,\xi_n\in\mathbb{C}$ divides the unit circle equally and thus it can be seen as the vertices of a regular polygon of $n$ edges. Hence, the points $p_1,\dots,p_n\in\mathbb{R}^2$ can also be seen as the vertices of a regular polygon $\Gamma^n\subset\mathbb{R}^2$ of $n$ edges. Let $l_1,\dots,l_n\subset\mathbb{R}^2$ be the $n$ straight lines such that $l_i\cap l_{i-1}=\{p_i\}$, $i\in\{1,\dots,n\}$, with $l_{0}=l_n,$ see Figure~\ref{Fig5}.
\begin{figure}[ht]
	\begin{center}
		\begin{minipage}{7cm}
			\begin{center} 
				\begin{overpic}[height=4cm]{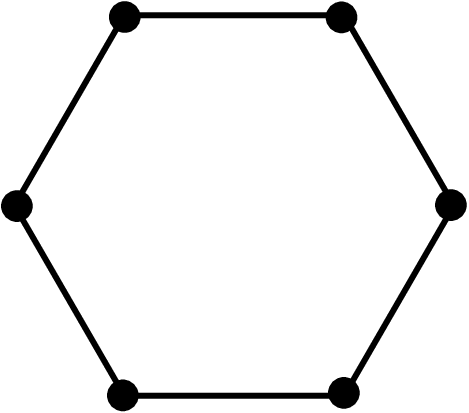} 
					\put(101,43){$p_6$}
					\put(78,85){$p_1$}
					\put(14,85){$p_2$}
					\put(-9,43){$p_3$}
					\put(14,0){$p_4$}
					\put(78,0){$p_5$}
					
					\put(48,75){$l_1$}
					\put(20,60){$l_2$}
					\put(20,25){$l_3$}
					\put(48,8){$l_4$}	
					\put(75,25){$l_5$}
					\put(75,60){$l_6$}
				\end{overpic}
			
			$n=6$.
			\end{center}
		\end{minipage}
		\begin{minipage}{7cm}
			\begin{center} 
				\begin{overpic}[height=4.1cm]{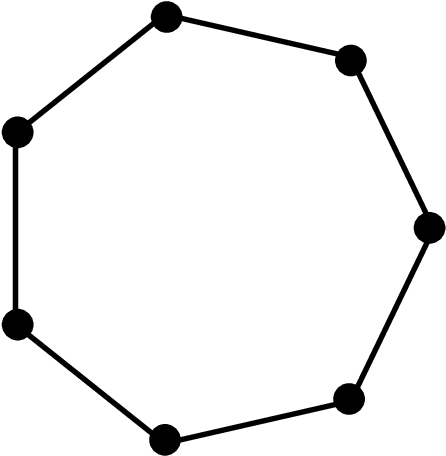}
					\put(101,49){$p_7$}
					\put(81,91){$p_1$}
					\put(22,98){$p_2$}
					\put(-10,70){$p_3$}
					\put(-10,28){$p_4$}
					\put(22,0){$p_5$}
					\put(81,8){$p_6$}
					
					\put(53,82){$l_1$}
					\put(22,77){$l_2$}
					\put(8,47){$l_3$}
					\put(22,19){$l_4$}
					\put(53,12){$l_5$}
					\put(75,32){$l_6$}
					\put(75,65){$l_7$}
				\end{overpic}
			
			$n=7$.
			\end{center}
		\end{minipage}
	\end{center}
	\caption{Illustration of $\Gamma^n$ with $n=6$ and $n=7$.}\label{Fig5}
\end{figure} 
Let $\alpha_i$, $\beta_i$, $d_i\in\mathbb{R}$ be such that $l_i$ is given by $\alpha_ix_1+\beta_ix_2-d_i=0$ and write $l_i(x)=\alpha_ix_1+\beta_ix_2-d_i$. Let also $X=(P,Q)$ be the planar polynomial system of degree $n$ given by	
\begin{equation}\label{25}
	P(x)=-\sum_{i=1}^{n}\left[\beta_iA_i(x)\prod_{j\neq i}l_j(x)\right], \quad  Q(x)=\sum_{i=1}^{n}\left[\alpha_iA_i(x)\prod_{j\neq i}l_j(x)\right].
\end{equation}
with $\deg A_i=1$, $i\in\{1,\dots,n\}$. We claim that each $l_i$ is an invariant straight line of $X$. Indeed, let $w\in l_s$ and observe that
\begin{equation}\label{26}
	P(w)=-\beta_sA_s(w)\prod_{j\neq s}l_s(w), \quad Q(w)=\alpha_sA_s(w)\prod_{j\neq s}l_s(w).
\end{equation}
The claim now follows from the fact that $\left<X(w),(\alpha_s,\beta_s)\right>=0$, where recall $\left<\cdot,\cdot\right>$ denotes the standard inner product of $\mathbb{R}^2$.  We now study the Jacobian matrix of $X$ at $p_s$, $s\in\{1,\dots,n\}$. It follows from \eqref{25} that,
\begin{equation}\label{27}
	\frac{\partial P}{\partial x_1}=-\sum_{i=1}^{n}\left[\beta_i\frac{\partial A_i}{\partial x_1}\prod_{j\neq i}l_j+\beta_iA_i\sum_{k\neq i}\left(\alpha_k\prod_{j\neq i,k}l_j\right)\right].
\end{equation}
For each $s\in\{1,\dots,n\}$ let,
\begin{equation}\label{41}
	M(s)=\prod_{\substack{j\neq s \\ j\neq s-1}}l_j(p_s).
\end{equation}
Since $l_s(p_s)=l_{s-1}(p_s)=0$, from \eqref{27} we obtain that
\begin{equation}\label{28}
	\frac{\partial P}{\partial x_1}(p_s)=-M(s)\bigl(\alpha_{s-1}\beta_sA_s(p_s)+\alpha_s\beta_{s-1}A_{s-1}(p_s)\bigr).
\end{equation}
Similarly, 
\begin{equation}\label{29}
	\begin{array}{l}
		\displaystyle \frac{\partial P}{\partial x_2}(p_s)=-\beta_s\beta_{s-1}M(s)\bigl(A_s(p_s)+A_{s-1}(p_s)\bigr), \vspace{0.2cm} \\
		\displaystyle \frac{\partial Q}{\partial x_1}(p_s)=\alpha_s\alpha_{s-1}M(s)\bigl(A_s(p_s)+A_{s-1}(p_s)\bigr), \vspace{0.2cm} \\
		\displaystyle \frac{\partial Q}{\partial x_2}(p_s)=M(s)\bigl(\alpha_s\beta_{s-1}A_s(p_s)+\alpha_{s-1}\beta_sA_{s-1}(p_s)\bigr).
	\end{array}
\end{equation}
Hence, from \eqref{28} and \eqref{29} the determinant of the Jacobian matrix of $X$ at $p_s$ is
\begin{equation}\label{30}
	\det DX(p_s)=-M(s)^2(\alpha_s\beta_{s-1}-\alpha_{s-1}\beta_s)^2A_s(p_s)A_{s-1}(p_s).
\end{equation}
Since $p_s\in l_i$ if, and only if $i\in\{s,s-1\}$, it follows that $M(s)\neq0$. Moreover, observe that
\begin{equation}\label{39}
	\alpha_s\beta_{s-1}-\alpha_{s-1}\beta_s=\det\left(\begin{array}{cc} \alpha_s & \alpha_{s-1} \\ \beta_s & \beta_{s-1} \end{array}\right).
\end{equation}
Since $l_s$ and $l_{s-1}$ are never parallel, we know that \eqref{39} never vanishes. Therefore, it follows from \eqref{30} that $p_s$ is a hyperbolic saddle if, and only if, $A_s(p_s)A_{s-1}(p_s)>0$. Moreover, its eigenvalues are given by
\begin{equation}\label{31}
	\mu_s=-(\alpha_s\beta_{s-1}-\alpha_{s-1}\beta_s)M(s)A_{s}(p_s), \quad \nu_s=(\alpha_s\beta_{s-1}-\alpha_{s-1}\beta_s)M(s)A_{s-1}(p_s).
\end{equation}
Given $s\in\{1,\dots,n\}$, let $w\in l_s$ be in the segment between $p_{s+1}$ and $p_s$. For $\Gamma^n$ to be a polycycle, is necessary that $w$ is not a singularity of $X$. It follows from \eqref{26} that $w$ is a singularity if, and only if $A_s(w)=0$. Hence, we conclude that $\Gamma^n$ is a polycycle composed by $n$ hyperbolic saddles $p_1,\dots,p_n$ if, and only if, $A_s(p_s)A_{s-1}(p_s)>0$ and $A_s(w)\neq 0$, for every $w\in l_s$ in the segment between $p_{s+1}$ and $p_s$, $s\in\{1,\dots,n\}$ and $p_{n+1}=p_1$. We now study the hyperbolicity ratio of $p_s$. Observe that we can choose $\alpha_s$ and $\beta_s$ such that $v_s=(\alpha_s,\beta_s)$ is unitary. Hence, \eqref{39} is the sine of the angle between $\ell_{s-1}$ and $\ell_s$. Since $\Gamma^n$ is a regular polygon, it follows that there is $\theta_n\in(0,\pi)$ such that $\alpha_{s-1}\beta_s-\alpha_s\beta_{s-1}=\sin\theta_n$, for every $s\in\{1,\dots,n\}$. Observe that we can choose $v_s$ to points towards the bounded region of $\Gamma^n$, for every $s\in\{1,\dots,n\}$. 
 
Moreover, observe that $l_j(p_s)$ is the distance with sign between $p_s$ and $l_j$. Since $\Gamma^n$ is a regular polygon, $v_s$ is unitary and points towards the bounded region of $\Gamma^n$, we get from \eqref{41} that $M(s)=M_n>0$, for every $s\in\{1,\dots,n\}$. Therefore from \eqref{31} we obtain that
\begin{equation}\label{40}
	\mu_s=-\sin\theta_nM_nA_s(p_s), \quad \nu_s=\sin\theta_nM_nA_{s-1}(p_s).
\end{equation}
Thus, if we choose $A_1,\dots, A_n$ such that $A_s(p_s)>0$ and $A_{s-1}(p_s)>0$, then we conclude that the hyperboli\-city ratio of $p_s$ is given by,
\begin{equation}\label{32}
	\frac{|\mu_s|}{\nu_s}=\frac{A_{s}(p_s)}{A_{s-1}(p_s)}, 
\end{equation}
for $s\in\{1,\dots,n\}$. Given $r_1,\dots,r_n\in\mathbb{R}$ positive real numbers, we can choose the polynomial $A_s\colon\mathbb{R}^2\to\mathbb{R}$ of degree one such that
\begin{equation}\label{33}
	A_s(p_s)=r_s, \quad A_{s-1}(p_s)=1,
\end{equation}
for every $s\in\{1,\dots,n\}$. Hence, from \eqref{32} we know that $r_s$ is the hyperbolicity ratio of $p_s$. Moreover, observe that since $A_s(p_s)=r_s>0$, $A_{s}(p_{s+1})=1>0$ and $\deg A_s=1$, it follows that $A_s(w)>0$ for every $w\in l_s$ in the segment between $p_{s+1}$ and $p_s$. Therefore, $X$ has no singularities between $p_{s+1}$ and $p_s$ and thus $\Gamma^n$ is indeed a polycycle. \end{proof}

\begin{remark}
	We observe that the construction presented at Proposition~\ref{Main2} results in a polycycle with the clockwise orientation, see Figure~\ref{Fig6}. If one wants a polycycle with the counter clockwise orientation, then it is sufficient to replace \eqref{33} by
		\[A_s(p_s)=-1, \quad A_{s-1}(p_s)=-r_s.\]
	In particular, the hyperbolicity ratio is now given by $|\nu_s|/\mu_s$.
\end{remark}

We end this section with an example.

\begin{proposition}\label{Main3}
	Set $n\geqslant 3$. Then there is a polynomial vector field  $X$ of degree $n$ with a polycycle $\Gamma^n$ that has cyclicity at least $n$ inside the space of polynomial vector fields of degree $n$, with the coefficients topology.
\end{proposition}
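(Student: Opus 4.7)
The plan is to combine Proposition~\ref{Main2} with the inductive bifurcation scheme behind Theorem~\ref{Main1}, using polynomial perturbations of degree exactly $n$ tailored to the invariant algebraic lines produced in the proof of Proposition~\ref{Main2}. First I would choose positive hyperbolicity ratios $r_1,\dots,r_n$ with $\Delta(\Gamma^n,\tau)=n$, for instance any alternating sequence with $r_1>1$, $r_1r_2<1$, $r_1r_2r_3>1,\dots,$ and apply Proposition~\ref{Main2} to obtain a polynomial vector field $X$ of degree $n$ whose polycycle $\Gamma^n$ has sides $L_i$ lying on the invariant lines $l_i$ introduced there.

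The key observation is that the perturbations
\[
Y_i(x)=\tilde Y_i(x)\prod_{j\neq i}l_j(x),\qquad i=1,\dots,n,
\]
where each $\tilde Y_i$ is a degree-one polynomial vector field, all have degree $n$ (since $\prod_{j\neq i}l_j$ has degree $n-1$), so the family $X_\mu=X+\sum_i\mu_iY_i$ stays inside polynomial vector fields of degree $n$. Since $Y_i$ vanishes on $l_j$ for every $j\neq i$ and $\gamma_j(t)\in l_j$ by invariance, formula~\eqref{5} yields $\partial d_j/\partial\mu_i(0)=0$ for $j\neq i$; and for a generic choice of $\tilde Y_i$ the diagonal term $\partial d_i/\partial\mu_i(0)$ is non-zero because $\prod_{j\neq i}l_j(\gamma_i(t))$ is non-identically zero along $L_i$ (vanishing only at the endpoints $p_i,p_{i+1}$). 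Hence the Jacobian is diagonal with non-zero entries, playing the role of equations~\eqref{7} and~\eqref{8} in the proof of Theorem~\ref{Main1}.

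From here I would mimic that proof: the Implicit Function Theorem applied to $F=(d_1,\dots,d_{n-2},d_{n-1}^{(1)})$ (well defined and of class $C^1$ by Proposition~\ref{P2} or~\ref{P2x} since $r_n\neq 1$) produces $\mu^*\neq 0$ small giving a sub-polycycle $\Gamma^{n-1}$ together with a bifurcating limit cycle, by combining Proposition~\ref{P3} with the Poincar\'e--Bendixson Theorem. Because every $Y_i$ is of degree $n$, the entire step takes place inside polynomial vector fields of degree $n$. For the inductive step one works with $X_{\mu^*}$, still polynomial of degree $n$, and reuses the $Y_1,\dots,Y_{n-1}$: although the lines $l_j$ are no longer exactly invariant under $X_{\mu^*}$, by continuity the reduced Jacobian of the $n-1$ displacement functions is a small perturbation of a diagonal matrix with non-zero diagonal, hence still invertible. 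Iterating $n$ times produces $n$ limit cycles close to $\Gamma^n$.

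The main obstacle is the inductive step: after each reduction the lines $l_j$ lose their invariance, so the off-diagonal entries of the Jacobian become non-zero, and one has to propagate the smallness of the perturbation uniformly through $n$ successive applications of the Implicit Function Theorem while preserving non-singularity and maintaining the $C^1$-regularity of the auxiliary displacement maps $d_{n-k}^{(1)}$ provided by Propositions~\ref{P2} and~\ref{P2x}. Verifying that the accumulated perturbations stay polynomial of degree $n$ (which is automatic from the linear structure of the ansatz) and that the hyperbolicity ratios of the surviving saddles stay bounded away from $1$ at each stage are the technical points that need careful bookkeeping.
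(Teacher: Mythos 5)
Your proposal is correct and follows essentially the same route as the paper: choose the ratios via Proposition~\ref{Main2}, perturb by $\mu_i$ times $\prod_{j\neq i}l_j$ multiplied by a transversal direction so that the displacement-map Jacobian is diagonal and nonsingular, and then run the machinery of Theorem~\ref{Main1}. The only cosmetic difference is that the paper takes the coefficient vector field to be the constant inward normal $(-\alpha_i,-\beta_i)$ (degree $n-1$ perturbation, diagonal entries positive by inspection) rather than a generic degree-one $\tilde Y_i$, and it likewise leaves the bookkeeping of the inductive step — which you rightly single out as the delicate point — to the reader.
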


\begin{proof} Given $n\geqslant 3$, let $r_1,\dots,r_n\in\mathbb{R}$ be positive real numbers and consider $R_i=\prod_{j=1}^{i}r_j$. Observe that we can choose $r_1,\dots,r_n$ recursively such that $R_1\neq1$ and $(R_i-1)(R_{i-1}-1)<0$, for every $i\in\{2,\dots,n\}$. Without loss of generality, we can suppose $r_n>1$. For these $r_1,\dots,r_n$, let $X$ be the planar polynomial vector field of degree $n$ given by Proposition~\ref{Main2}. That is, let $X=(P,Q)$ be given by \eqref{25},
\begin{equation*}
	P(x)=-\sum_{i=1}^{n}\left[\beta_iA_i(x)\prod_{j\neq i}l_j(x)\right], \quad  Q(x)=\sum_{i=1}^{n}\left[\alpha_iA_i(x)\prod_{j\neq i}l_j(x)\right],
\end{equation*}
where $l_i(x)=\alpha_ix_1+\beta_ix_2-d_i$ are such that the straight lines $l_i(x)=0$ are invariant and satisfy $l_i\cap l_{i-1}=\{p_i\}$, with $p_i=(\cos(2i\pi/n),\sin(2i\pi/n))$, for $i\in\{1,\dots,n\}$. Moreover,  recall that $\deg A_i=1$ and $A_i(w)>0$ for every $w\in l_i$ in the segment between $p_{i+1}$ and $p_{i}$, $i\in\{1,\dots,n\}$. Without loss of generality we can assume that $\Gamma^n$ has the clockwise orientation. For $s\in\{1,\dots,n\}$ let $H_s\colon\mathbb{R}^2\to\mathbb{R}$ be the polynomial of degree $n-1$ given by,
\begin{equation*}
	H_s(x)=\prod_{j\neq s}l_j(x).
\end{equation*}
Consider now the polynomial $K\colon\mathbb{R}^2\times\mathbb{R}^n\to\mathbb{R}^2$
\begin{equation*}
	K(x,\mu)=\sum_{s=1}^{n}\mu_sH_s(x)Y_s(x),
\end{equation*}
where $\mu=(\mu_1,\dots,\mu_n)\in\mathbb{R}^n$ and $Y_s(x)$ is the constant vector field given by $Y_s(x)\equiv Y_s\equiv(-\alpha_s,-\beta_s)$. Define
	\[X_\mu(x)=X(x)+K(x,\mu).\]
Since $K$ has degree $n-1$ in $x\in\mathbb{R}^2$, it follows that $X_\mu$ is a polynomial vector field of degree $n$. Moreover, clearly $X_\mu\to X$ in the coefficients topology, as $\mu\to0$. Let $\Lambda\subset\mathbb{R}^n$ be a small enough neighborhood of the origin and let $d_i\colon\Lambda\to\mathbb{R}$ be the displacement maps associated to the heteroclinic connections of $\Gamma^n$, $i\in\{1,\dots,n\}$. Let also $d_{n-1}^{(1)}\colon\Lambda\to\mathbb{R}$ be the displacement map given by Proposition~\ref{P2} (recall that $r_n>1$). Notice also that 
\begin{equation}\label{36}
	X(x)\land\frac{\partial K}{\partial \mu_j}(x,0)=(P,Q)\land(-H_j\alpha_j,-H_j\beta_j)=H_j(-P\beta_j+Q\alpha_j).
\end{equation}	
Let $L_i\subset l_i$ be the heteroclinic connection of $\Gamma^n$ from $p_{i+1}$ to $p_i$. Similarly to the proof of Theorem~\ref{Main1}, we now study the sign of \eqref{36} on $L_i$. Let $x_i\in L_i$ and let $\gamma_i(t)$ be the parametrization of $L_i$ given by the solution of $X$, with initial condition $\gamma_i(0)=x_i$. It follows from \eqref{26} that,
\begin{equation}\label{37}
	P(\gamma_i(t))=-\beta_iA_i(\gamma_i(t))H_i(\gamma_i(t)), \quad Q(\gamma_i(t))=\alpha_iA_i(\gamma_i(t))H_i(\gamma_i(t)).
\end{equation}
Replacing \eqref{37} at \eqref{36} and knowing that $(\alpha_i,\beta_i)$ is unitary we obtain, 
\begin{equation}\label{38}
	X(\gamma_i(t))\land\frac{\partial K}{\partial \mu_j}(\gamma_i(t),0)=H_j(\gamma_i(t))H_i(\gamma_i(t))A_i(\gamma_i(t)).
\end{equation}	
Since $R_j\circ\gamma_i=0$ if $i\neq j$, we conclude from \eqref{5} and \eqref{38} that if $i\neq j$, then $\frac{\partial d_i}{\partial \mu_j}(0)=0$.	Moreover, if $i=j$, then it follows from $A_i(\gamma_i(t))>0$ that $\frac{\partial d_i}{\partial \mu_i}(0)>0$, for every $i\in\{1,\dots,n\}$. Since $r_n>1$, it follows from Proposition~\ref{P2} and Corollary~\ref{Coro1} that $d_{n-1}^{(1)}\colon\Lambda\to\mathbb{R}$ is a well defined function of class $C^1$ such that,
\begin{equation*}
	\frac{\partial d_{n-1}^{(1)}}{\partial\mu_j}(0)=\frac{\partial d_{n-1}}{\partial\mu_j}(0),
\end{equation*}
for every $j\in\{1,\dots,n\}$. Then we can define  $F\colon\Lambda\subset\mathbb{R}^n\to\mathbb{R}^{n-1}$ and
	\[F(\mu)=\left(d_1(\mu),\dots,d_{n-2}(\mu),d_{n-1}^{(1)}(\mu)\right),\]
and study its zero set to know the limit cycles and polycycles that persist. At this point the proof can be continued similarly to the one of Theorem~\ref{Main1} with minor changes and we omit all the details. \end{proof}

\section{Final considerations}\label{Sec7}

Let $X$ be a planar $C^\infty$-vector field with a hyperbolic polycycle $\Gamma^n$ with hyperbolic saddles $\{p_1,\dots,p_n\}$, hyperbolicity ratios $r_1,\dots,r_n\in\mathbb{R}_{>0}$ and distinct regular orbits $\{L_1,\dots,L_n\}$, where $p_i$ is the $\omega$-limit of $L_i$. Let also $X_\mu$, with $\mu\in\Lambda$ and $\Lambda\subset\mathbb{R}^n$ a small enough neighborhood of the origin, be a $n$-parameter $C^\infty$-family of vector fields such that $X_0=X$. Let also $d_i\colon\Lambda\to\mathbb{R}$ be the associated displacement map of $L_i$, $i\in\{1,\dots,n\}$.

As anticipated in Remark~\ref{Remark6}, it follows from Mourtada~\cite[Theorem~$3$]{Mourtada5} that generically speaking even in the smooth case the cyclicity of $\Gamma^n$ is finite and depends only on the number $n$ of hyperbolic saddles. More precisely, for each $n\in\mathbb{N}$ there is a finite set of generic algebraic conditions
\begin{equation}\label{46}
	g_{j,n}(r_1,\dots,r_n)\neq0, \quad j\in\{1,\dots,N(n)\},
\end{equation}
with $g_{j,n}$ polynomials of $n$ variables and with integer coefficients; and an integer number $e(n)$ that depends only on $n$, such that for any smooth vector field $X$ with a polycycle $\Gamma^n$, with hyperbolicity ratios satisfying \eqref{46}, and any perturbation family $X_\mu$ of $X$, it holds $\textit{Cycl }(X,X_\mu,\Gamma^n)\leqslant e(n)$. See~\cite[p. $722$]{Mourtada5}.

Among the generic conditions we have those named by Mourtada~\cite[p. $722$]{Mourtada5} as ``$CH$-conditions'' (\emph{Condition Hyperbolique}), given by:
\begin{enumerate}
	\item[{[CH]}] For each subset $J\subset\{1,\dots,n\}$, $\prod_{j\in J}r_j\neq1$.
\end{enumerate}
For $n\leqslant3$ these are the only conditions. For $n\geqslant4$ other conditions appear, see~\cite[p. 723]{Mourtada5}. So far it is known that $e(n)=n$ for $n\leqslant3$ and $e(4)=5$, see~\cite{Mourtada2,Mourtada3,Mourtada4,MourtadaThesis} and the references therein. Explicit upper bounds for $e(n)$ are known for $n\geqslant5$ but they are extremely large and believed to be not sharp. For example, it is known that $e(5)\leqslant65533$, see~\cite{Panazzolo}.

The semi-algebraic conditions \eqref{46} define an open and dense semi-algebraic subset $U$ in $\mathbb{R}^n$ (the space of the hyperbolicity ratios $(r_1,\dots,r_n)$) and for each connected component of $U$ there is a given cyclicity, Roussarie~\cite[Remark~$30$]{Roussarie}. 

Therefore we observe that Theorem~\ref{Main1} provides a lower bound on each one of these connected components. In particular, since $U$ is open and dense, it follows that it contains a $n$-tuple $(r_1,\dots,r_n)$ such that 
	\[(R_i-1)(R_{i-1}-1)<0, \quad \forall i\in\{1,\dots,n\}.\] 
Hence Theorem~\ref{Main1} also provides a new proof for the already known fact~\cite{MourtadaThesis} that $e(n)\geqslant n$ for every $n\in\mathbb{N}$. Moreover, it follows from Propositions~\ref{Main2} and~\ref{Main3} that this lower bound is realizable by \emph{polynomial} vector fields of degree $n$, with the perturbation also polynomial of degree at most $n$, and arbitrarily small in relation to the coefficients topology. 

Let $\ell=l_1$ be the transversal section at the regular orbit $L_1$ where the displacement map $d_1$ takes place, endowed with a coordinate system identifying $\ell$ with $\{t\in\mathbb{R}\colon |t|<\varepsilon\}$, $\varepsilon>0$ small enough, such that $t=0$ is the intersection point $\ell\cap\Gamma^n$ and $0<t<\varepsilon$ is contained in the domain of the first return map associated with $\Gamma^n$. Let also $b_i(\mu)=\sigma_0d_i(\mu)$, $i\in\{1,\dots,n\}$ (where we recall that $\sigma_0\in\{-1,1\}$ depends whether the first return map is defined in the inner or outer region of $\Gamma^n$, see Section~\ref{Sec3}). If the hyperbolicity ratios $r_1,\dots,r_n$ satisfies the generic conditions \eqref{46}, there is a continuous function $\rho\colon\Lambda\to\mathbb{R}$, with $\rho(0)=0$, such that the first return map $\pi\colon(\rho(\mu),\varepsilon)\times\Lambda\to\ell$ is well defined and the solutions of $\pi(t,\mu)=t$ (i.e. the periodic orbits that bifurcate form $\Gamma^n$) are also solutions of
\begin{equation}\label{47}
	\Bigr(\dots\bigr((t^{r_1(\mu)}+b_1(\mu))^{r_2(\mu)}+b_2(\mu)\bigl)^{r_3(\mu)}\dots+b_{n-1}(\mu)\Bigl)^{r_n(\mu)}+b_n(\mu)=\alpha(\mu)t,
\end{equation}
with $\alpha(\mu)>0$ for every $\mu\in\Lambda$. See~\cite{MourtadaThesis} and \cite[Theorem~$1$ and p. $276$]{Mourtada}. In particular, we have that the generic cyclicity $e(n)$ is bounded above by the maximum number $\operatorname{fp}(n)$ of solutions of equation~\eqref{47}. We observe that these number need not to be equal because \eqref{47} may have solutions far away from $t=0$, while the limit cycles are represented only by those solutions that bifurcate from $t=0$. If $n=3$ for example, Mourtada~\cite{Mourtada3} proved that $e(3)=3$, while Panazzolo~\cite{Panazzolo} proved that $\operatorname{fp}(3)=5$.

We observe that Propositions~\ref{Main2} and~\ref{Main3} can be used to prove that a given equation of the form \eqref{47} may be realizable by a family of polynomial vector fields of degree $n$. More precisely, Proposition~\ref{Main2} ensures that any prescribed set of hyperbolicity ratios $(r_1(0),\dots,r_n(0))$ is realizable, while Proposition~\ref{Main3} provides a perturbation family $X_\mu$ such that the map $\mu\mapsto(b_1(\mu),\dots,b_n(\mu))$ has full rank at $\mu=0$. 

In other words (see Roussarie~\cite[Section~$5.4.2$]{Roussarie}), for any prescribed initial condition we have a \emph{generic unfolding} realizable by a family of polynomial vector fields of degree $n$. 

For more details we refer to Roussarie~\cite[Chapter~$5$]{Roussarie} and Panazzolo~\cite{Panazzolo}. Since the unfolding of the first return map of a hyperbolic polycycle is also intrinsically linked with the unfoldings of the \emph{Dulac map} of its hyperbolic saddles, we also refer to the recent works of Marin and Villadelprat \cites{MarVil2020,MarVil2021,MarVil2024}.

\section*{Acknowledgments}

We thank the reviewers for their careful and thoughtful comments and suggestions which help us to improve the presentation of this paper. This work is supported by the Spanish State Research Agency, through the projects PID2022-136613NB-I00 grant and the Severo Ochoa and Mar\'ia de Maeztu Program for Centers and Units of Excellence in R\&D (CEX2020-001084-M),  grant 2021-SGR-00113 from AGAUR, Generalitat de Ca\-ta\-lu\-nya, by CNPq, grant 304798/2019-3, by Agence Nationale de la Recherche (ANR), project ANR-23-CE40-0028, and S\~ao Paulo Research Foundation (FAPESP), grants 2019/10269-3, 2021/01799-9, 2022/14353-1 and 2023/02959-5.

\end{document}